\newtheorem{theorem}{Theorem}[section]
\newtheorem{lemma}[theorem]{Lemma}
\newtheorem{corollary}[theorem]{Corollary}
\newtheorem{proposition}[theorem]{Proposition}
\theoremstyle{definition}
\newtheorem{example}[theorem]{Example}
\theoremstyle{remark}
\newtheorem{remark}[theorem]{Remark}
\numberwithin{equation}{section}
\newcommand{\R}{\mathbb R}
\newcommand{\N}{\mathbb N}
\def\e{\varepsilon}
\begin{document}

\title[Approximation of an optimal control problem on a network]{Approximation of an optimal control problem on a network with a perturbed problem
in the whole space}

\date{\today}

\author[M. Camar-Eddine, M. Chuberre, M. Haddou, O. Ley]{Mohamed Camar-Eddine \and M\'eriadec Chuberre \and Mounir Haddou \and Olivier Ley}

\address{Univ Rennes, INSA Rennes, CNRS, IRMAR - UMR 6625, F-35000 Rennes, France}
\email{\{mcamared, meriadec.chuberre, mounir.haddou, olivier.ley\}@insa-rennes.fr}

\begin{abstract}
A classical optimal control problem posed in the whole space~$\mathbb{R}^2$ is perturbed 
by a singular term of magnitude $\varepsilon^{-1}$ aimed at driving the trajectories
to a prescribed network $\Gamma$. We are interested in the link between the limit problem, as~$\varepsilon\to0$, and some optimal control problems on networks studied in the literature. We prove that the sequence of trajectories admits a subsequential limit evolving on~$\Gamma$.  Moreover, in the case of the Eikonal equation,
we show that the sequence of value functions associated with the perturbed optimal control problems converges to a limit which, in particular, coincides with the value function of the expected optimal control problem set on the network~$\Gamma$.

\end{abstract}

\subjclass[2010]{Primary 34H05; Secondary 34D15, 35R02, 49L25, 34A34}
\keywords{Optimal control; Networks; Nonlinear ODE; Singular perturbation of ODE; Hamilton-Jacobi-Bellman equations; Asymptotic behavior}

\maketitle

\section{Introduction}
\label{sec:intro}

There are a lot of recent developments on the study of optimal control problems
and Hamilton-Jacobi-Bellman (HJB) equations on networks or stratified structures,
see~\cite{sc13, acct13, imz13, bbc13, bbc14} for the seminal works and the book~\cite{bc24}
for a recent overview of the theory.
This subject is beyond the classical theory of viscosity solutions since
the problem is not set in an open domain (with possibly boundary conditions)
as usual. A new theory has been developed to tackle the issue of networks.
The common approach is to consider controls and costs which are designed
to constrain the trajectories to remain on the network.
Then, define a HJB
equation on the network and, in particular, give a meaning of what happens
at the vertices.

Here, we follow a different approach by considering a network $\Gamma$
as a closed subset of $\R^2$. In the whole space $\R^2$,  we perturb a classical optimal 
control problem with an additional
singular term of magnitude $\frac{1}{\varepsilon}$
aimed at driving the trajectories closer and closer
to~$\Gamma$. The goal is to make the link between the limit problem
and some related optimal control problems studied in the
literature.
In this way, we encode  all the geometry of
the network  in the perturbation and we can approach some
optimal control problems on networks by studying classical
(singular perturbed) optimal control problems.
Many limits of the value functions of singular perturbed optimal control problems have been 
studied (see, for instance, \cite{ab03,ab10,ag00,bb98}). However, to our knowledge,
this kind of perturbation is new in the context of HJB
equation on networks.
There is a related work
by Achdou \& Tchou~\cite{at15}, where the control problem on a junction is
approximated by considering a state constraint
problem in an $\varepsilon$-neighborhood of the junction. 

In this work, which was initiated in Chuberre~\cite{chuberre23}, we consider a very simple model
case we will describe more precisely in the next section.
In the future,
we would like to investigate more general networks,
more general control problems and to apply our results
to numerical approximation of optimal control problems
on networks.

The structure of the paper is the following: In Section~\ref{nota-stat-results} we set up some general notations and state the main results (Theorems \ref{cv-traj}, \ref{edo-interieur} and \ref{limit-vf}). Section~\ref{sec:ode-eps} deals with the asymptotics of the perturbed ODE, whereas Section~\ref{sec:sko} is dedicated to the study of the qualitative properties of the trajectories. In Section~\ref{sec:control}, we investigate the limit behavior of the sequence of value functions associated with the perturbed ODE studied in Section~\ref{sec:ode-eps}. Finally, the appendix collects some useful properties of the gradient descent (Appendix~\ref{prop-dist}) and the proofs of some intermediate results used in the previous sections (Appendices~\ref{proof-beh-thr-origin} and~\ref{sec-proof-lemma123}).
\smallskip

\noindent{\bf Acknowledgements:}
This work is partially supported by ANR COSS (COntrol on Stratified Structures) ANR-22-CE40-0010
and the Centre Henri Lebesgue ANR-11-LABX-0020-01.

\section{Statement of the problem and main results}\label{nota-stat-results} 

We start by defining a classical optimal control problem
in $\R^2$.
Let $A$ be a compact metric space (the set of control values)
and $f:\R^2\times A\to \R^2$,
$\ell:\R^2\times A\to \R$ be continuous functions satisfying, for some $M>0$ and all $a\in A$, $x,y\in \R^2$,
\begin{eqnarray}\label{hyp-f-ell}
\begin{array}{ll}
|f(x,a)| \leq M, &  |f(x,a)-f(y,a)| \leq M|x-y|,\\
|\ell(x,a)|\leq M, & |\ell(x,a)-\ell(y,a)|\leq m_\ell (|x-y|),
\end{array}
\end{eqnarray}
(here and throughout the paper, $|\cdot|$ denotes indifferently the Euclidean norm in $\R$ and $\R^2$
and $m_\ell: [0,\infty)\to [0,\infty)$ is a modulus of continuity).
We consider the infinite horizon control problem given by the value function
\begin{eqnarray}\label{vf-classique}
&& V(x):= \inf_{\alpha \in \mathcal{A}} \int_0^\infty e^{-\lambda t} \ell (X^{x,\alpha}(t),\alpha(t))dt,
\end{eqnarray}
where $\lambda >0$ is a discount parameter,
$\mathcal{A}:= L^\infty ((0,\infty);A)$ is the set of controls,
and
$X^{x,\alpha}$ is the controlled trajectory of $\R^2$ given by the ODE
\begin{eqnarray}\label{traj}
&&  X^{x,\alpha}(0)=x\in\R^2, \quad   \dot{X}^{x,\alpha}(t)=f(X^{x,\alpha}(t), \alpha(t)), \  t >  0.
\end{eqnarray}

To study the control problem (\ref{vf-classique})-(\ref{traj}) on some network $\Gamma$, the by-now classical way is to
restrict the class of controls to those leading to trajectories~\eqref{traj}
remaining on $\Gamma$, that is, for $x\in\Gamma$,
\begin{eqnarray}\label{def-Ax}
&&  \mathcal{A}_x:= \{\alpha \in \mathcal{A} :  X^{x,\alpha} \text{ is a solution of~\eqref{traj} and }
  X^{x,\alpha}(t)\in \Gamma \text{ for all $t\geq 0$}\}.
\end{eqnarray}
Under some additional assumptions on the set of controls, it is proved (\cite{acct13}) that
the value function~\eqref{vf-classique} restricted to trajectories living on $\Gamma$,
\begin{eqnarray}\label{vf-gamma}
&& V_\Gamma(x):= \inf_{\alpha \in  \mathcal{A}_x} \int_0^\infty e^{-\lambda t} \ell (X^{x,\alpha}(t),\alpha(t))dt,
\end{eqnarray}
is the unique viscosity solution to a HJB equation set on $\Gamma$.
We do not give the precise assumptions and definitions here, we refer the reader to
Achdou \emph{et al.}~\cite{acct13, aot15}.
This notion of solution is, in many usual cases, equivalent to those
developed in~\cite{imz13, ls17}, see~\cite{bc24} and references therein for details.

In this work,
we follow another approach. To simplify the exposition,
we consider a very simple  junction $\Gamma\subset \R^2$
consisting of the union
of the two axes,
\begin{eqnarray}\label{reseau}
&&  \Gamma = \{O \} \cup \bigcup_{i\in \{E,N,W,S\}} (0,\infty) e_i,
\end{eqnarray}
where $O=(0,0)$ is the origin, called the junction, and the $e_i$'s are the four
unit vector pointing to the direction
of the cardinal points $e_E=(1,0)$, $e_N=(0,1)$, $e_W=(-1,0)$ and $e_S=(0,-1)$.
For further purpose, we set $e_i^\perp = \mathcal{R}(e_i)$, where $\mathcal{R}$
is the rotation by angle $\frac{\pi}{2}$ in the plane.
Let us consider a $C^{1,1}$ function $d_\Gamma :\R^2\to \R$
such that
\begin{eqnarray}\label{cond-d}
\Gamma = \{ d_\Gamma = 0\} = \{ \nabla  d_\Gamma = (0,0) \}.
\end{eqnarray}
In the sequel, we will simply write  $d_\Gamma= d$ and choose
\begin{eqnarray}\label{choix-d}
  d(x)= x_1^2 x_2^2, \quad \text{for any $x=(x_1,x_2)\in\R^2$}.
\end{eqnarray}
It is obvious that $d$ fulfills~\eqref{cond-d}. The reason for
choosing $\Gamma$ given by~\eqref{reseau} and the above function $d$
is that  $d$ is of polynomial type and the computations are simple.
Actually, our framework should be easily generalized to
the case of general junctions with a finite number of edges (or branches), that is,
\begin{eqnarray}\label{jonction-generale}
\Gamma = \{O \} \cup \bigcup_{i\in \mathcal{I}} (0,\infty) e_i,
\end{eqnarray}
where $\{e_i\}_{i\in \mathcal{I}}$ is a finite set of distinct unit vectors in $\R^2$. In this case it is always possible to
choose a subanalytic function $d_\Gamma$ satisfying~\eqref{cond-d} and sharing many of the properties
of the particular function $d$ defined by~\eqref{choix-d} (see~\cite[Chapter 6]{chuberre23}
and Appendix~\ref{prop-dist}).
We leave this for a future work. 
Let us mention that, to illustrate our proposed approach in a very simple framework,
the straightforward case of the ``line network'' $\Gamma= \{O \} \cup  (0,\infty)e_E\cup  (0,\infty)e_W$ is described in Appendix~\ref{line-junction}.

\smallskip
We now introduce a $\frac{1}{\varepsilon}$-perturbed optimal control problem 
\begin{eqnarray}\label{val-eps}
&& V^\varepsilon (x):= \inf_{\alpha \in \mathcal{A}} \int_0^\infty e^{-\lambda t} \ell (X^{x,\alpha, \varepsilon}(t),\alpha(t))dt,
\end{eqnarray}
where $X^{x,\alpha, \varepsilon}$ is the solution to the ODE
\begin{eqnarray}\label{traj-eps}
  && \dot{X}^{x,\alpha, \varepsilon}(t)= F^\varepsilon (X^{x,\alpha, \varepsilon}(t), \alpha(t))
  := f(X^{x,\alpha, \varepsilon}(t), \alpha(t)) -\frac{1}{\varepsilon} \nabla d (X^{x,\alpha, \varepsilon}(t)), \ t>0,
\end{eqnarray}
satisfying  $X^{x,\alpha, \varepsilon}(0)=x$.

We describe our main results. Let
\begin{eqnarray}\label{def-k}
&& k^{x,\alpha, \varepsilon}(t)= \frac{1}{\varepsilon}\int_0^t \nabla d (X^{x,\alpha, \varepsilon}(s)) ds, \quad t\geq0.
\end{eqnarray}  

\begin{theorem}[Convergence of trajectories]  \label{cv-traj}
Consider~\eqref{traj-eps} where $f$ satisfies~\eqref{hyp-f-ell} and $d$ is given by~\eqref{choix-d}.
  For every $x\in\R^2$ and $\alpha\in \mathcal{A}$,
\begin{enumerate}
\item There exists a unique solution $X^{x,\alpha, \varepsilon}\in W_{\rm loc}^{1,\infty}[0,\infty)$
  of~\eqref{traj-eps}.
\item Up to extraction, $(X^{x,\alpha, \varepsilon}, k^{x,\alpha, \varepsilon})$ converges locally uniformly on $(0,\infty)$
  to some $(X^{x,\alpha},k^{x,\alpha})\in H_{\rm loc}^1(0,\infty)\times H_{\rm loc}^1(0,\infty)$ when $\varepsilon\to 0$, such that
\begin{eqnarray}
&& \text{for all $t\geq 0$, }    X^{x,\alpha}(t)=x+\int_0^t f(X^{x,\alpha}(s),\alpha(s))ds - k^{x,\alpha}(t),\label{ppte-skorokhod}\\
&& \text{for all $t > 0$, } X^{x,\alpha}(t) \in \Gamma, \nonumber\\
&&  X^{x,\alpha}(0^+)= \overline{x}, \ k^{x,\alpha}(0^+)= x-\overline{x},\nonumber
\end{eqnarray} 
with
\begin{eqnarray}\label{proj-gamma}
&& \overline{x}:= \lim_{t\to \infty} Z^x (t),
\end{eqnarray}
where $Z^x$ is the unique solution of
\begin{eqnarray}\label{edo-grad1}
&& \dot{Z}^x= -\nabla d(Z^x), \ Z^x(0)=x.
\end{eqnarray}

\item If $x\in\Gamma$, then, up to extraction,
  $(X^{x,\alpha, \varepsilon}, k^{x,\alpha, \varepsilon})$ converges locally uniformly
  on $[0,\infty)$ to $(X^{x,\alpha},k^{x,\alpha})\in H_{\rm loc}^{1} [0,\infty)\times  H_{\rm loc}^{1} [0,\infty)$ when $\varepsilon\to 0$.

\item $(X^{x,\alpha},k^{x,\alpha})\in W_{\rm loc}^{1,\infty} (0,\infty)\times  W_{\rm loc}^{1,\infty} (0,\infty)$ and
\begin{eqnarray}\label{estim-derivees}
&& |\dot{X}^{x,\alpha}(t)| , |\dot{k}^{x,\alpha}(t)| \leq |f|_\infty \qquad \text{for a.e. $t\in (0,\infty)$.}
\end{eqnarray}

\end{enumerate}
\end{theorem}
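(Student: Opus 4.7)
\smallskip
\noindent\textbf{Proof plan.}
The plan is to establish the four assertions in order, relying on classical Carath\'eodory ODE theory for part~(1), on uniform energy estimates and compactness for parts~(2) and~(3), and on the geometric structure of $\Gamma$ for part~(4).

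For part~(1), the vector field $F^\varepsilon(\cdot,a)=f(\cdot,a)-\frac{1}{\varepsilon}\nabla d$ is locally Lipschitz in $x$ uniformly in $a\in A$, since $\nabla d(x)=(2x_1x_2^2,2x_1^2x_2)$ is polynomial and $f$ satisfies~\eqref{hyp-f-ell}. For a fixed measurable control $\alpha$, this yields local existence and uniqueness in the Carath\'eodory sense. To globalize, I would derive an \emph{a priori} bound on $|X^{x,\alpha,\varepsilon}(t)|$ using the dissipative identity $x\cdot\nabla d(x)=4d(x)\geq 0$: computing $\tfrac{d}{dt}|X^{x,\alpha,\varepsilon}|^2$ yields $|X^{x,\alpha,\varepsilon}(t)|\leq|x|+Mt$, which rules out finite-time blow-up.

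For part~(2), the heart of the argument is a uniform energy estimate. Starting from the identity
\[
\tfrac{d}{dt}d(X^{x,\alpha,\varepsilon})=\nabla d(X^{x,\alpha,\varepsilon})\cdot f(X^{x,\alpha,\varepsilon},\alpha)-\varepsilon\,|\dot k^{x,\alpha,\varepsilon}|^2,
\]
I would integrate on $[\delta,T]$ and use Young's inequality to bound $\int_\delta^T|\dot k^{x,\alpha,\varepsilon}|^2\,dt$ uniformly in $\varepsilon$, provided $d(X^{x,\alpha,\varepsilon}(\delta))/\varepsilon$ stays controlled. This last control comes from the fast initial layer: after the rescaling $s=t/\varepsilon$, the trajectory is a small perturbation of $Z^x$ from~\eqref{edo-grad1}, which tends to $\overline x\in\Gamma$ by the appendix on the gradient descent. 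Combining these bounds, $\dot X^{x,\alpha,\varepsilon}$ is also $L^2_{\rm loc}(0,\infty)$-bounded, so Arzel\`a--Ascoli on $X^{x,\alpha,\varepsilon}$ and weak compactness of $\dot k^{x,\alpha,\varepsilon}$ extract a subsequential limit $(X^{x,\alpha},k^{x,\alpha})$. Passing to the limit in $X^{x,\alpha,\varepsilon}(t)=x+\int_0^t f(X^{x,\alpha,\varepsilon}(s),\alpha(s))\,ds-k^{x,\alpha,\varepsilon}(t)$ gives~\eqref{ppte-skorokhod}, and $\int_\delta^T|\nabla d(X^{x,\alpha,\varepsilon})|^2\,dt\to 0$ together with~\eqref{cond-d} forces $X^{x,\alpha}(t)\in\Gamma$ for $t>0$. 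The same rescaled comparison with $Z^x$ identifies $X^{x,\alpha}(0^+)=\overline x$ and $k^{x,\alpha}(0^+)=x-\overline x$.

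For part~(3), if $x\in\Gamma$ then $\overline x=x$ and the initial layer is trivial, so the previous energy estimate extends up to $\delta=0$; hence the convergence is locally uniform on $[0,\infty)$ and both components lie in $H^1_{\rm loc}[0,\infty)$. For part~(4), I would use the geometry of the limit: on each open edge $(0,\infty)e_i$, the tangent to $\Gamma$ is one-dimensional, and for $X^{x,\alpha,\varepsilon}$ near that edge $\nabla d(X^{x,\alpha,\varepsilon})$ is aligned with $e_i^\perp$; passing to the limit forces $\dot k^{x,\alpha}$ to lie along $e_i^\perp$ and $\dot X^{x,\alpha}$ along $e_i$, so~\eqref{ppte-skorokhod} decomposes as $\dot X^{x,\alpha}=(f\cdot e_i)e_i$ and $\dot k^{x,\alpha}=(f\cdot e_i^\perp)e_i^\perp$, both bounded by $|f|_\infty$. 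I expect the main obstacle to be this last step at the vertex $O$, where the direction of reflection is not canonically defined and one must rely on the fine asymptotic analysis of Section~\ref{sec:ode-eps} to exclude pathological accumulation of $|\dot k^{x,\alpha}|$ at the junction.
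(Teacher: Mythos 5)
Your overall architecture matches the paper's (Carath\'eodory plus the dissipative identity for part~(1), energy/compactness for parts~(2)--(3), projection onto edges for part~(4)), and you correctly isolate the crucial quantity $d(X^{x,\alpha,\varepsilon}(\delta))/\varepsilon$ as the bottleneck for the $H^1_{\rm loc}$ bound. But the mechanism you propose to control it does not work, and this is a genuine gap.

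You claim the bound on $d(X^{x,\alpha,\varepsilon}(\delta))/\varepsilon$ ``comes from the fast initial layer: after the rescaling $s=t/\varepsilon$ the trajectory is a small perturbation of $Z^x$.'' The Gr\"onwall comparison with $Z^x$ (this is precisely~\eqref{gron123}) yields
\[
|X^{x,\alpha,\varepsilon}(\varepsilon s)-Z^x(s)|\leq\frac{\varepsilon|f|_\infty}{L}\,e^{Ls},
\]
where $L$ is a Lipschitz constant of $\nabla d$. This is only useful for rescaled times $s\lesssim\frac{1}{L+1}\ln(1/\varepsilon)$, i.e., physical times $t\lesssim\varepsilon|\ln\varepsilon|\to 0$; beyond that the exponential factor overwhelms the $\varepsilon$. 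In particular it gives no information at a fixed $\delta>0$. Moreover, even at the optimal rescaled time $t_\varepsilon\sim\ln(1/\varepsilon)$, the gradient flow $Z^x(t_\varepsilon)$ has only reached a distance of order $(\ln(1/\varepsilon))^{-1/2}$ from $\overline x$ (the Łojasiewicz exponent $\theta=3/4$ gives $d(Z^x(t))\sim t^{-2}$), so $d(X^{x,\alpha,\varepsilon}(\varepsilon t_\varepsilon))\sim(\ln(1/\varepsilon))^{-1}$, which is nowhere near $O(\varepsilon)$. The paper obtains the control differently: the entry-time bound $t^{x,\alpha,\varepsilon}(\lambda)\leq C\varepsilon/\lambda$ of Proposition~\ref{prop-inv}\,(4) (or alternatively the layer estimate $t^{x,\alpha,\varepsilon}(\varepsilon^{4\gamma/3})\leq C\varepsilon^{1-\gamma}$ of part~(3)), which rests on the structural inequality $|\nabla d|\geq 2\sqrt2\,d^{3/4}$ and the integral estimate $\frac{1}{\varepsilon}\int_0^T d(X^{x,\alpha,\varepsilon})\,dt\leq C$ of Lemma~\ref{estim-int-dist}. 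With $\lambda=C\varepsilon/\eta$ one finds $t_1\leq\eta$ at which $d(X^{x,\alpha,\varepsilon}(t_1))/\varepsilon\leq C/\eta$, which is what the energy estimate actually needs. The Gr\"onwall/initial-layer argument is the right tool for identifying $X^{x,\alpha}(0^+)=\overline{x}$ (Step~3 of Lemma~\ref{saut-sur-reseau}), but not for the $H^1$ bound.

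Your treatment of part~(4) is also incomplete where it matters: inside the open edges the projection $\dot X^{x,\alpha}=\langle f,e_i\rangle e_i$ indeed gives the bound (this is Theorem~\ref{edo-interieur}), but you explicitly defer the junction $\mathcal{Z}(X^{x,\alpha})$ to ``fine asymptotic analysis'' without an actual argument. The paper resolves it by contradiction at any differentiability point $t\in\mathcal{Z}$: if $|\dot X^{x,\alpha}(t)|>|f|_\infty$, the Taylor expansion places $X^{x,\alpha}(t+h)$ strictly inside an edge with $|X^{x,\alpha}(t+h)|>|f|_\infty h$, which contradicts the edge dynamic from Theorem~\ref{edo-interieur}. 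You would need to supply this or an equivalent argument.
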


Following the above results, we introduce the $\omega$-limit set of all possible trajectories,
\begin{eqnarray}\label{omega-lim}
&&  \omega (x,\alpha)= \{ X^{x,\alpha} \in H_{\rm loc}^1(0,\infty): \text{$(X^{x,\alpha},k^{x,\alpha})$ is a subsequential limit} \\
&&\hspace*{7cm}  \text{of $(X^{x,\alpha, \varepsilon},k^{x,\alpha, \varepsilon})$ in
    \eqref{traj-eps}-\eqref{def-k}}\}.\nonumber
\end{eqnarray}
Theorem~\ref{cv-traj} implies that $\omega (x,\alpha)\not=\emptyset$
but we are not able to prove that $\omega (x,\alpha)$ reduces to one element.
More precisely, we cannot characterize completely a solution $(X^{x,\alpha},k^{x,\alpha})$
of the limit problem. Property~\eqref{ppte-skorokhod} means that
$(X^{x,\alpha},k^{x,\alpha})$
is a solution of a Skorokhod problem~\cite{ls84}. But, such a problem on~$\Gamma$ faces non-uniqueness
and instability. See Section \ref{effetzenon} for discussion.
Later on, writing $\omega (x,\alpha)=\{X^{x,\alpha}\}$ means that the whole sequence
$(X^{x,\alpha, \varepsilon}, k^{x,\alpha, \varepsilon})_\varepsilon$ converges to a unique element $(X^{x,\alpha},k^{x,\alpha})$.

We obtain a partial characterization result inside the edges.
Let $X^{x,\alpha}\in  \omega (x,\alpha)$. By continuity of the trajectory for $t>0$,
we can write
\begin{eqnarray}\label{junctionandedges}
(0,\infty)= \mathcal{Z}(X^{x,\alpha}) \cup \bigcup_{0 \leq {n}\leq l} (a_{n}(X^{x,\alpha}), b_{n}(X^{x,\alpha})),  
\end{eqnarray}
where $l=l(X^{x,\alpha})$ is possibly infinite, $\mathcal{Z}(X^{x,\alpha}):=\{ t\in (0,\infty) : X^{x,\alpha}(t)=O\}$
is the closed set of $(0,\infty)$ when the trajectory is at the junction,
and $(a_{n}(X^{x,\alpha}), b_{n}(X^{x,\alpha}))$ are the time intervals when the trajectory is inside a branch,
\emph{i.e.}, for every ${n}\in\N$, there exists $i=i({n})\in \{E,N,W,S\}$ such that, for all $t\in (a_{n}(X^{x,\alpha}), b_{n}(X^{x,\alpha}))$,
$X^{x,\alpha}(t)\in (0,\infty)e_{i({n})}$.
The intervals depend on $X^{x,\alpha}$ but we omit to write the dependence when there is no ambiguity.
\begin{theorem}[Dynamic on $\Gamma$ outside $O$]\label{edo-interieur}
Let $X^{x,\alpha}\in  \omega (x,\alpha)$.
\begin{enumerate}

\item If $a_{n}=0$ is the initial time, then, for all $t\in [0,b_{n}]$,
\begin{eqnarray*}
  \left\{
  \begin{array}{l}
\displaystyle X^{x,\alpha}(t)=\overline{x}+\int_0^t \langle f(X^{x,\alpha}(s),\alpha(s)),e_{i({n})}\rangle e_{i({n})} ds,\\
\displaystyle k^{x,\alpha}(t)=x-\overline{x}  +\int_0^t \langle f(X^{x,\alpha}(s),\alpha(s)),e_{i({n})}^\perp\rangle e_{i({n})}^\perp ds,
  \end{array}
  \right.
\end{eqnarray*}
where $\overline{x}$ is defined by~\eqref{proj-gamma}.

\item If $a_{n}\not= 0$, then, for all $t\in [a_{n},b_{n}]$,
\begin{eqnarray*}
  \left\{
  \begin{array}{l}
\displaystyle X^{x,\alpha}(t)=\int_{a_{n}}^t \langle f(X^{x,\alpha}(s),\alpha(s)),e_{i({n})}\rangle e_{i({n})} ds,\\
\displaystyle k^{x,\alpha}(t)= k^{x,\alpha}(a_{n})  +\int_{a_{n}}^t \langle f(X^{x,\alpha}(s),\alpha(s)),e_{i({n})}^\perp\rangle e_{i({n})}^\perp ds.
  \end{array}
  \right.
\end{eqnarray*}

\end{enumerate}
\end{theorem}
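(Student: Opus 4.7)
The plan is to fix an interval $(a_n, b_n)$ from the decomposition~\eqref{junctionandedges} and establish two crucial facts: (i) $\dot X^{x,\alpha}(t)$ is parallel to $e_{i(n)}$ a.e., and (ii) $\dot k^{x,\alpha}(t)$ is perpendicular to $e_{i(n)}$ a.e., on $(a_n,b_n)$. Combined with the Skorokhod-type identity~\eqref{ppte-skorokhod}, these decompose $f$ into tangential and normal parts, giving the two integrals in the statement. For (i), by definition of $(a_n,b_n)$ one has $X^{x,\alpha}(t) = \langle X^{x,\alpha}(t), e_{i(n)}\rangle e_{i(n)}$ for all $t \in (a_n, b_n)$. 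Since $X^{x,\alpha} \in W^{1,\infty}_{\rm loc}(0,\infty)$ by Theorem~\ref{cv-traj}(4), differentiating a.e.\ gives $\dot X^{x,\alpha}(t)\parallel e_{i(n)}$, so $\langle \dot X^{x,\alpha}(t), e_{i(n)}^\perp \rangle = 0$ a.e. Differentiating~\eqref{ppte-skorokhod} and projecting on $e_{i(n)}^\perp$ then yields $\langle \dot k^{x,\alpha}, e_{i(n)}^\perp \rangle = \langle f(X^{x,\alpha}, \alpha), e_{i(n)}^\perp\rangle$, which is the normal contribution appearing in the formulas.

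The core difficulty is (ii): showing $\langle \dot k^{x,\alpha}(t), e_{i(n)}\rangle = 0$ a.e. I would argue at the $\varepsilon$-level. Without loss of generality (by the symmetries of $\Gamma$ and $d$), assume $e_{i(n)} = e_E = (1,0)$, so $\nabla d(x) = (2x_1 x_2^2, 2 x_1^2 x_2)$. On any compact sub-interval $[c,d] \subset (a_n, b_n)$, the uniform convergence in Theorem~\ref{cv-traj}(2) places $X^{x,\alpha,\varepsilon}(t) = (X_1^\varepsilon(t), X_2^\varepsilon(t))$ with $X_1^\varepsilon$ uniformly bounded below by some $c_0 > 0$ and $X_2^\varepsilon \to 0$ uniformly on $[c,d]$, for $\varepsilon$ small. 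The normal component of~\eqref{traj-eps} then reads
\begin{align*}
\dot X_2^\varepsilon(t) = f_2(X^{x,\alpha,\varepsilon}(t), \alpha(t)) - \tfrac{2(X_1^\varepsilon(t))^2}{\varepsilon}\, X_2^\varepsilon(t),
\end{align*}
a strongly dissipative scalar singular ODE. A Gronwall/comparison argument (of the same nature as those of Section~\ref{sec:ode-eps}) yields the boundary-layer estimate $|X_2^\varepsilon(t)| \leq C\varepsilon$ on $[c + \eta, d]$ for any small $\eta > 0$. Plugging this into the tangential component of $\frac{1}{\varepsilon}\nabla d$,
\begin{align*}
\Bigl| \bigl\langle k^{x,\alpha,\varepsilon}(t) - k^{x,\alpha,\varepsilon}(c+\eta), e_E\bigr\rangle \Bigr| = \frac{1}{\varepsilon}\left| \int_{c+\eta}^{t} 2 X_1^\varepsilon(s) \bigl(X_2^\varepsilon(s)\bigr)^2 \, ds \right| \leq C'\, \varepsilon\, (t - c - \eta)\longrightarrow 0,
\end{align*}
so after passing to the limit $\varepsilon \to 0$ and then letting $\eta \downarrow 0$, $c\downarrow a_n$, $d\uparrow b_n$, one obtains $\langle \dot k^{x,\alpha}, e_E\rangle = 0$ a.e.\ on $(a_n,b_n)$.

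Assembling (i) and (ii), projecting~\eqref{ppte-skorokhod} onto the tangent/normal directions and integrating produces the claimed expressions, up to the initial values at $t=a_n$. For case~(2), where $a_n > 0$, the time $a_n$ is a limit point of $\mathcal{Z}(X^{x,\alpha})$, so continuity gives $X^{x,\alpha}(a_n) = O$ and the initial constant in $k^{x,\alpha}$ is just $k^{x,\alpha}(a_n)$. For case~(1), where $a_n = 0$, the jump conditions $X^{x,\alpha}(0^+) = \overline{x}$ and $k^{x,\alpha}(0^+) = x - \overline{x}$ from Theorem~\ref{cv-traj} furnish the stated initial constants. The main obstacle is the $O(\varepsilon)$ control of the normal coordinate in Step~2: it requires both compact containment inside the open branch (to ensure $X_1^\varepsilon$ stays bounded below) and careful handling of the initial transient at time $c$, but once established, the vanishing of the tangential part of $\frac{1}{\varepsilon}\nabla d$ is a direct consequence of its being quadratic in the small normal coordinate, whereas the normal part is linear and thus of order one in the limit.
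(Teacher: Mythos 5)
Your proof is correct but reaches the key conclusion by a genuinely different route than the paper. To show that the tangential contribution $\frac{1}{\varepsilon}\int\langle\nabla d(X^{x,\alpha,\varepsilon}),e_{i(n)}\rangle\,ds$ vanishes, you prove a \emph{quantitative boundary-layer estimate} $|X_2^\varepsilon|\leq C\varepsilon$ in the interior of the interval via a scalar singular-ODE comparison, and then observe that the tangential part of $\nabla d$ is quadratic in this small coordinate, so the $\frac{1}{\varepsilon}$ integral is $O(\varepsilon)$. The paper instead never estimates the \emph{rate} of decay of $\langle X^{x,\alpha,\varepsilon},e^\perp\rangle$: it exploits the algebraic identity $\langle\dot k^{x,\alpha,\varepsilon},e\rangle=\frac{\langle X^{x,\alpha,\varepsilon},e^\perp\rangle}{\langle X^{x,\alpha,\varepsilon},e\rangle}\langle\dot k^{x,\alpha,\varepsilon},e^\perp\rangle$ (specific to $d(x)=x_1^2x_2^2$), applies Cauchy--Schwarz, and uses only the $H^1$-bound on $k^{x,\alpha,\varepsilon}$ together with the \emph{unquantified} uniform convergence $\langle X^{x,\alpha,\varepsilon},e^\perp\rangle\to 0$. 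Your comparison argument has the advantage of being a self-contained ODE computation that generalizes more readily to other penalizations $d$, while the paper's ratio trick is shorter and requires no rate at all. Both correctly reduce to showing the tangential component of $\dot k^{x,\alpha}$ vanishes, and both handle the $e^\perp$-component identically.

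One caveat: you invoke Theorem~\ref{cv-traj}(4) (the $W^{1,\infty}_{\rm loc}$ bound) to justify differentiating $X^{x,\alpha}(t)=\langle X^{x,\alpha}(t),e_{i(n)}\rangle e_{i(n)}$, but in the paper Theorem~\ref{cv-traj}(4) is proved \emph{after} and \emph{from} Theorem~\ref{edo-interieur}, so this would be circular. Fortunately you only need a.e.\ differentiability of $X^{x,\alpha}$, which already follows from the $H^1_{\rm loc}$ regularity given by Theorem~\ref{cv-traj}(2); cite that instead. Also, the Gronwall/comparison step is stated but not carried out; to make it rigorous you should note that $\frac{d}{dt}(X_2^\varepsilon)^2\leq 2|f|_\infty|X_2^\varepsilon|-\frac{4c_0^2}{\varepsilon}(X_2^\varepsilon)^2$ on $[c,d]$ once $X_1^\varepsilon\geq c_0>0$, and that $X_2^\varepsilon(c)\to 0$ by pointwise convergence, so the transient is absorbed in the fixed margin $\eta>0$ before estimating on $[c+\eta,d]$.
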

\noindent This result gives a characterization of the limits $X^{x,\alpha}$ and $k^{x,\alpha}$
inside the edges in terms of projections of the given dynamic $f$ as expected.
A consequence of this result is that $X^{x,\alpha}$ (and so $k^{x,\alpha}$) is uniquely determined 
once we know the sequences $(a_n)_n$ and $(b_n)_n$, which seems to be hard to establish
for general $f$. Indeed, it is easy to build examples with Zeno effects, for which
the trajectory $X^{O,\alpha}$ visits all the edges for arbitrary small times.
See~\cite{jz23} and Lemma~\ref{lem:traj-visite-tout}.

We now turn to the natural asymptotic optimal control problem on $\Gamma$
induced by~\eqref{val-eps}-\eqref{traj-eps}, by
setting, for $x\in\R^2$,
\begin{eqnarray}\label{vf-naturelle}
&& \overline{V}(x):= \inf_{\begin{array}{c}\scriptstyle\alpha \in \mathcal{A}\\[-1mm]\scriptstyle X^{x,\alpha}\in  \omega (x,\alpha)\end{array}} \int_0^\infty e^{-\lambda t} \ell (X^{x,\alpha}(t),\alpha(t))dt.
\end{eqnarray}

We introduce two additional assumptions:
\begin{eqnarray}\label{calc-var}
&& A= \overline{B}(0,1) \text{ and } f(x,a)=a \text{ for all $x\in\R^2$, $a\in A$,}
\end{eqnarray}
and
\begin{eqnarray}\label{cout-indep}
  && \ell (x,a)=\ell(x)  \text{ for all $x\in\R^2$, $a\in A$.}
\end{eqnarray}
These assumptions appear to be very restrictive since, in this case,
the HJB equation,
which corresponds to the optimal control problem~\eqref{vf-classique},
is the classical Eikonal equation,
\begin{eqnarray}\label{hjb-eik} 
&& \lambda u(x) + |\nabla u (x)| = \ell (x), \quad x\in \R^2,
\end{eqnarray}
the unique viscosity solution of which is
the value function $V$ defined by \eqref{vf-classique}.

In this simple setting,
the assumptions of~\cite[Section 2]{acct13} hold, and we know
that $V_\Gamma$ in~\eqref{vf-gamma} is the unique viscosity solution of~\eqref{hjb-eik} 
{\em on $\Gamma$} in the sense of~\cite{acct13}, and we are in position
to make the link between the value function $\overline{V}$ and $V_\Gamma$.

\begin{theorem}[Convergence to the control problem on the network]\label{limit-vf}
Assume~\eqref{hyp-f-ell},~\eqref{calc-var} and~\eqref{cout-indep}. Then,
  
\begin{enumerate}

\item  $\overline{V}=V_\Gamma$ on $\Gamma$, where $V_\Gamma$ is defined by~\eqref{vf-gamma}.
  
\item $V^\varepsilon$ converges locally uniformly on $\R^2$ to $\overline{V}\circ\phi_d$,
where $\phi_d: x\in\R^2\mapsto \overline{x}\in \Gamma$,
with $\overline{x}$ defined by~\eqref{proj-gamma}.
\end{enumerate}
\end{theorem}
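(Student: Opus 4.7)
}

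To prove (1), I would establish $\overline V\le V_\Gamma$ and $V_\Gamma\le \overline V$ on $\Gamma$ as two independent inequalities. For $\overline V\le V_\Gamma$, I would notice the following crucial observation: if $x\in\Gamma$ and $\alpha\in\mathcal A_x$, then the classical trajectory $X^{x,\alpha}$ stays on $\Gamma$, hence $\nabla d(X^{x,\alpha}(t))=0$ by \eqref{cond-d}, which means that $X^{x,\alpha}$ itself solves the perturbed ODE~\eqref{traj-eps}. By uniqueness, $X^{x,\alpha,\varepsilon}\equiv X^{x,\alpha}$ for every $\varepsilon>0$, so the whole sequence converges and $X^{x,\alpha}\in\omega(x,\alpha)$ with cost equal to $J(x,\alpha)$. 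Taking the infimum over $\alpha\in\mathcal A_x$ gives $\overline V(x)\le V_\Gamma(x)$. For the reverse inequality, let $X^{x,\alpha}\in\omega(x,\alpha)$ be an arbitrary limit trajectory. By Theorem~\ref{cv-traj}(3)--(4), $X^{x,\alpha}$ is Lipschitz on $[0,\infty)$ with $|\dot X^{x,\alpha}|\le 1$, lives on $\Gamma$, and starts at $x$; by Theorem~\ref{edo-interieur}, on each edge $e_{i(n)}$ the derivative is the tangential projection $\langle \alpha,e_{i(n)}\rangle e_{i(n)}$. I would then set $\tilde\alpha(t):=\dot X^{x,\alpha}(t)$ a.e., which belongs to $A=\overline B(0,1)$ since $|\tilde\alpha|\le|\alpha|\le 1$. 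By construction $X^{x,\alpha}$ is the classical trajectory driven by $\tilde\alpha$, it remains on $\Gamma$, so $\tilde\alpha\in\mathcal A_x$; the costs coincide because $\ell$ depends only on $x$ by \eqref{cout-indep}, giving $V_\Gamma(x)\le\overline V(x)$.

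For the upper bound in~(2), fix $x\in\R^2$ and $\eta>0$; using (1) I would pick a control $\alpha\in\mathcal A_{\overline x}$ such that $J(\overline x,\alpha)\le V_\Gamma(\overline x)+\eta$, and feed this $\alpha$ into the perturbed problem started at~$x$. By Theorem~\ref{cv-traj}(2), along a subsequence, $X^{x,\alpha,\varepsilon}\to X^{x,\alpha}$ locally uniformly on $(0,\infty)$ with $X^{x,\alpha}(0^+)=\overline x$. Theorem~\ref{edo-interieur}(1) (case $a_0=0$) together with the fact that $\alpha$ is tangential on the initial edge would let me identify $X^{x,\alpha}$ on $[0,b_0]$ with the classical trajectory $X^{\overline x,\alpha}$, and by induction on the intervals $(a_n,b_n)$ the same identification propagates. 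Splitting $\int_0^\infty e^{-\lambda t}\ell(X^{x,\alpha,\varepsilon})\,dt$ at a small $\delta>0$, the $[0,\delta]$ piece is bounded by $M\delta/\lambda$, while the locally uniform convergence on $[\delta,\infty)$ combined with dominated convergence yields $J^\varepsilon(x,\alpha)\to J(\overline x,\alpha)$. Hence $\limsup_\varepsilon V^\varepsilon(x)\le V_\Gamma(\overline x)+\eta$, and $\eta\to 0$ finishes the upper bound.

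For the lower bound in~(2), I would pick a near-minimizing sequence $\alpha_n\in\mathcal A$ with $J^{\varepsilon_n}(x,\alpha_n)\le V^{\varepsilon_n}(x)+\varepsilon_n$. Since $|\alpha_n|\le 1$, extract $\alpha_n\rightharpoonup^*\alpha$ in $L^\infty((0,\infty);A)$. Revisiting the compactness estimates of Theorem~\ref{cv-traj} (which rely only on the uniform bound $|\alpha_n|\le 1$, not on a fixed control), I would obtain along a subsequence the locally uniform convergence of $(X^{x,\alpha_n,\varepsilon_n},k^{x,\alpha_n,\varepsilon_n})$ on $(0,\infty)$ to some $(X,k)$ satisfying the Skorokhod relation~\eqref{ppte-skorokhod} with the limiting control $\alpha$, with $X(0^+)=\overline x$ and $X(t)\in\Gamma$ for $t>0$. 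The tangent-control construction of Part~(1) then produces $\tilde\alpha\in\mathcal A_{\overline x}$ with $X$ as its classical trajectory, so $\int_0^\infty e^{-\lambda t}\ell(X(t))\,dt\ge V_\Gamma(\overline x)$; passing to the limit in the cost (again splitting at $\delta\to 0$) gives $\liminf_n V^{\varepsilon_n}(x)\ge V_\Gamma(\overline x)$. Combined with the upper bound and the identity $V_\Gamma=\overline V$ on $\Gamma$ from~(1), this yields pointwise convergence to $\overline V\circ\phi_d$; local uniformity in $\R^2$ would follow from the uniform bound $\|V^\varepsilon\|_\infty\le M/\lambda$ and the continuity of $\phi_d$ on $\R^2$ by a standard equicontinuity or compactness argument.

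The main obstacle is the lower-bound step: extending the compactness and Skorokhod identification of Theorem~\ref{cv-traj} to a sequence of varying controls $\alpha_n$, and securing the passage to the limit in the cost across the initial boundary layer, where possible Zeno behaviour at~$O$ (cf.~Lemma~\ref{lem:traj-visite-tout}) and the mere locally uniform convergence on $(0,\infty)$ prevent a direct appeal to continuous dependence.
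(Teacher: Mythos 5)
Your Part (1) is essentially correct and coincides with the paper's argument: the observation that $\alpha\in\mathcal A_{\overline x}$ makes the perturbed trajectory $\varepsilon$-independent is Lemma~\ref{techn1}\,(1), and the reconstruction $\tilde\alpha:=\dot X^{x,\alpha}=\alpha-\dot k^{x,\alpha}$ is Lemma~\ref{techn1}\,(2). No issue there.

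Part (2) is where the proposal breaks down, and it does so in two places, the second of which you partly flag yourself.

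\emph{Upper bound.} You feed a near-optimal $\alpha\in\mathcal A_{\overline x}$ directly into the perturbed problem started at $x\notin\Gamma$ and claim that the subsequential limit $X^{x,\alpha}$ can be identified, edge by edge, with $X^{\overline x,\alpha}$. Theorem~\ref{edo-interieur} determines the dynamics \emph{inside} an edge but says nothing about how long the trajectory sits at $O$ or which edge it leaves into, and the paper emphasizes (Lemma~\ref{lem:traj-visite-tout}, Corollary~\ref{cor-instable}) that the limit problem is not stable and that $\omega(x,\alpha)$ need not be a singleton. Your proposed ``induction on the intervals $(a_n,b_n)$'' has no control over the sequence of edges visited after the first return to $O$, so the identification $X^{x,\alpha}=X^{\overline x,\alpha}$ is not justified. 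The paper avoids this entirely: Proposition~\ref{propV-123}\,(3) replaces $\alpha$ by a purpose-built $\alpha^\varepsilon$ that follows an accelerated gradient descent until the state reaches $\overline x$ \emph{exactly} at time $\tau^\varepsilon=O(\varepsilon^{1/4})$, and only then switches to $\alpha$; from that instant the $\varepsilon$-trajectory is literally $X^{\overline x,\alpha}$ and no identification of limits is needed.

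\emph{Lower bound.} This is the real content of the theorem, and the proposal does not prove it. Weak-$*$ extraction of $\alpha_n$ together with the diagonal compactness of Theorem~\ref{cv-traj} will not close the argument: the estimates of Lemma~\ref{local-unif-bound} are indeed uniform in the control, but passing to the limit in the Skorokhod relation with a weak-$*$ limit $\alpha$ does not identify the limit trajectory as a \emph{classical} trajectory on $\Gamma$ driven by an admissible control, and the Zeno phenomenon at $O$ prevents a direct continuous-dependence argument, as you acknowledge. What the paper does instead is prove the trajectory-tracking result, Proposition~\ref{suivi-traj-1}: for every $\varepsilon$-trajectory $X^{x,\alpha,\varepsilon}$ there is an initial point $\overline x_\varepsilon\to\overline x$ on $\Gamma$ and a control $\beta\in\mathcal A_{\overline x_\varepsilon}$ whose on-network trajectory shadows $X^{x,\alpha,\varepsilon}$ within $C(\varepsilon^{\gamma/8}+\varepsilon^{5\gamma/24}t)$ past the boundary layer. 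This requires the explicit geometry of $d=x_1^2x_2^2$ (the sets $\mathcal C_O,\mathcal C_N,\mathcal C_E$, the modified trajectory $\underline Y$, and Lemmas~\ref{controle-vitesse-123}, \ref{traj-y-tilde}) and is the key ingredient behind $\overline V(\overline x)\le V^\varepsilon(x)+m_R(\varepsilon)$ in Proposition~\ref{propV-123}\,(1). Without a result of this type there is no way to bound $V^\varepsilon$ from below by the network value function, so the proposal has a genuine gap at its central step, not merely a technical obstacle to be smoothed out.
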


This result brings a complete answer to our problem
in a simple setting, which intends to be a first step toward more general
ones.
Assumption~\eqref{calc-var} implies that the system is controllable at
the junction. This controllability assumption is in force in most of the papers
on networks (\cite{acct13, imz13, sc13, aot15}).
It ensures, in particular, the continuity of the value function.
Regarding~\eqref{cout-indep}, we provide a counterexample
(Example~\ref{contre-ex-la})
to the equality $\overline{V}=V_\Gamma$
when~\eqref{cout-indep} does not hold. It follows that this assumption  
cannot be easily removed without restricting ourselves to controls designed
for keeping the trajectories on  $\Gamma$.
For the reader's convenience, we describe all the above results
in the straightforward case of the line network, see Appendix~\ref{line-junction},
which can be read separately as a starting point.

The proof of Theorem \ref{limit-vf} relies on some optimal control techniques.
It is an intriguing question to know whether it is possible to
obtain the result by PDE techniques, \emph{i.e.}, by passing to the limit
directly in the viscosity inequalities satisfied by $V^\varepsilon$.
All these issues will be the aim of a future work.

\section{Study of the perturbed ODE~\eqref{traj-eps}}
\label{sec:ode-eps}

In this section, we study the well-posedness and the convergence, as $\varepsilon\to 0$, of
the $\frac{1}{\varepsilon}$-perturbed ODE~\eqref{traj-eps}.
In the whole section, we will assume that $f$ satisfies~\eqref{hyp-f-ell} and that
$\Gamma$ and $d$ are given by~\eqref{reseau} and~\eqref{choix-d}.
We are looking for estimates which are independent of $\alpha\in \mathcal{A}$
and $\e >0$.

Some elementary calculations show that the function $d$ satisfies, for any $x\in\R^2$, 
\begin{equation}\label{propri-d}
  \langle x,\nabla d(x)\rangle=4d(x),\
   |\nabla d(x)|=2d^\frac{1}{2}(x) |x|,\
  |x|\geq\sqrt{2}d^\frac{1}{4}(x),
  |\nabla d(x)|\geq2\sqrt{2}d^\frac{3}{4}(x),\
\end{equation}
and, some useful properties of~\eqref{traj-eps} in the case $f\equiv 0$ are collected in Theorem~\ref{thm-loja} and Proposition~\ref{traj-d1}. 

\begin{lemma}[Well-posedness of~\eqref{traj-eps} for $\varepsilon >0$] \label{lem-exis}
  Assume~\eqref{hyp-f-ell}. Then, for
  every $\e >0$, $\alpha\in \mathcal{A}$ and $x\in\R^2$, there exists a unique solution
  $X^{x,\alpha, \varepsilon}\in W_{\rm loc}^{1,\infty}[0,\infty)$ of~\eqref{traj-eps}. Moreover, $X^{x,\alpha, \varepsilon}$ satisfies
\begin{eqnarray}\label{bornXeps}
&& |X^{x,\alpha, \varepsilon}(t)| \leq |x|+\sqrt{2}|f|_\infty t, \quad \text{for all $t\geq 0$.}
\end{eqnarray}
\end{lemma}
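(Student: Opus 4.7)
The plan is to apply the classical Cauchy--Lipschitz (Carath\'eodory) theorem for local existence and uniqueness, then exploit the dissipative structure of the singular term to produce an a priori bound that rules out finite-time blow-up, giving both global existence and the estimate \eqref{bornXeps}.

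First, I would set up local existence and uniqueness. The drift $F^\varepsilon(x,a)=f(x,a)-\frac{1}{\varepsilon}\nabla d(x)$ is continuous in $(x,a)$ on $\R^2\times A$ with $\nabla d(x)=(2x_1x_2^2,\,2x_1^2x_2)$ polynomial in $x$. By \eqref{hyp-f-ell}, $f(\cdot,a)$ is $M$-Lipschitz uniformly in $a$, and $\nabla d$ is Lipschitz on every bounded set. For $\alpha\in\mathcal{A}=L^\infty((0,\infty);A)$, the map $t\mapsto F^\varepsilon(\cdot,\alpha(t))$ is measurable in $t$ and locally Lipschitz in $x$ uniformly in $t$ on compact sets. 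Classical Carath\'eodory theory then yields a unique absolutely continuous maximal solution $X^{x,\alpha,\varepsilon}$ on some interval $[0,T^*)$ with $T^*\in(0,\infty]$.

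Next, I would derive the a priori bound using \eqref{propri-d}. Differentiating $|X^{x,\alpha,\varepsilon}|^2$ and using $\langle x,\nabla d(x)\rangle=4d(x)\ge 0$ gives, for a.e.\ $t\in[0,T^*)$,
\begin{equation*}
\frac{d}{dt}|X^{x,\alpha,\varepsilon}(t)|^2
= 2\langle X^{x,\alpha,\varepsilon},f(X^{x,\alpha,\varepsilon},\alpha)\rangle
- \frac{8}{\varepsilon}d(X^{x,\alpha,\varepsilon})
\le 2|X^{x,\alpha,\varepsilon}(t)|\,|f|_\infty.
\end{equation*}
The nonnegative dissipation $\frac{8}{\varepsilon}d(X^{x,\alpha,\varepsilon})$ is simply dropped, and the standard argument $\frac{d}{dt}|X|\le|f|_\infty$ on $\{|X|>0\}$ yields $|X^{x,\alpha,\varepsilon}(t)|\le|x|+|f|_\infty t$, which is stronger than (and in particular implies) \eqref{bornXeps}.

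Finally, this uniform-on-compacts estimate rules out finite-time blow-up, so $T^*=+\infty$. On any $[0,T]$ the trajectory stays in a fixed compact set where $\nabla d$ is bounded, hence $\dot X^{x,\alpha,\varepsilon}=f(X^{x,\alpha,\varepsilon},\alpha)-\frac{1}{\varepsilon}\nabla d(X^{x,\alpha,\varepsilon})\in L^\infty(0,T)$, giving $X^{x,\alpha,\varepsilon}\in W^{1,\infty}_{\rm loc}[0,\infty)$. There is no real obstacle here: the only thing to watch is that $\nabla d$ is not globally Lipschitz (its derivative grows like $|x|^2$), which would a priori threaten global existence for large times; the dissipative identity $\langle x,\nabla d(x)\rangle=4d(x)\ge 0$ exactly cancels this threat and lets the classical contraction argument propagate to $[0,\infty)$.
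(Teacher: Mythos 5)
Your proof is correct and follows essentially the same strategy as the paper: Carath\'eodory/Cauchy--Lipschitz for local existence and uniqueness, then the dissipative identity $\langle x,\nabla d(x)\rangle=4d(x)\ge 0$ to drop the singular term and close a Gronwall-type estimate that both yields \eqref{bornXeps} and rules out blow-up. The only cosmetic difference is in the final inequality step (you differentiate $|X|$ directly on $\{|X|>0\}$ rather than introducing the auxiliary function $h$ as the paper does), and you obtain the constant $|f|_\infty$ where the paper inserts a slack factor $\sqrt{2}$ in bounding $\langle X,f\rangle$; both yield the stated estimate.
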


\begin{proof}
By~\eqref{hyp-f-ell} and the smoothness of $d$, the map $(x,a)\in \R^2\times A\mapsto F^\varepsilon(x,a)$,
defined in~\eqref{traj-eps},
is locally Lipschitz continuous with respect to $x$ and measurable with respect to $a$.
(Note that the Lipschitz constant of $F^\varepsilon$ is of order $1/\e$, which is the main
difficulty in the study.)
Thanks to Cauchy-Lipschitz theorem (e.g.,~\cite{hale80} for a measurable in $t$ version),
we have existence and uniqueness of a maximal solution $X^{x,\alpha, \varepsilon}$ in any
interval $[0,T),\, T>0$. Using~\eqref{hyp-f-ell} and~\eqref{propri-d}, we have, for a.e. $0\leq t<T$,
\begin{eqnarray}\label{calc-norm2} 
 \frac{1}{2}\frac{d}{dt}|X^{x,\alpha, \varepsilon}(t)|^2
  &=&\langle X^{x,\alpha, \varepsilon}(t), F^\varepsilon(X^{x,\alpha, \varepsilon}(t),\alpha(t))\rangle\\ \nonumber
&= &\langle X^{x,\alpha, \varepsilon}(t), f(X^{x,\alpha, \varepsilon}(t),t) \rangle -\frac{4}{\varepsilon}d(X^{x,\alpha, \varepsilon}(t)) \\ \nonumber
&\leq& \sqrt{2}|f|_\infty |X^{x,\alpha, \varepsilon}(t)|.
\end{eqnarray}
It follows that $|X^{x,\alpha, \varepsilon}(t)|^2\leq h(t):=|x|^2+ 2\sqrt{2}|f|_\infty \int_0^t |X^{x,\alpha, \varepsilon}(s)|ds$,
from which we infer that~$h'(t)\leq  2\sqrt{2}|f|_\infty \sqrt{h(t)}$ and then $h(t)\leq (\sqrt{h(0)}+\sqrt{2}|f|_\infty t)^2$.
We end up with inequality~\eqref{bornXeps} in $[0,T)$, which implies that the solution is global, \emph{i.e.}, $T=\infty$. 
\end{proof}

For any $\e >0$ and $\alpha\in \mathcal{A}$, we write
\begin{eqnarray}\label{decom-eps}
(0,\infty)= \mathcal{Z}(X^{x,\alpha,\e}) \cup \bigcup_{0 \leq {n}\leq N} (a_{n}(X^{x,\alpha,\e}), b_{n}(X^{x,\alpha,\e})),  
\end{eqnarray}
where $\mathcal{Z}(X^{x,\alpha, \e}):=\{ t\in (0,\infty) : d(X^{x,\alpha,\e}(t))=0\}$
and $(a_{n}(X^{x,\alpha,\e}), b_{n}(X^{x,\alpha,\e}))$ (denoted by $(a_n^\e,b_n^\e)$ for short)
are the time intervals when the trajectory is out of the (closed) set $\Gamma$.

\begin{lemma}\label{estim-int-dist}
Assume~\eqref{hyp-f-ell}.  
Let  $\e >0$, $\alpha\in \mathcal{A}$, $x\in\R^2$ and $T>0$. Then, there exists a positive constant~$C=C(T,|f|_\infty, |x|)$ such that, for any  $0\leq t_1\leq t_2\leq T$, we have 
\begin{equation}\label{estimintegrdxepsilon}
  \frac{1}{\varepsilon}\int_{{t_1}}^{{t_2}}d(X^{x,\alpha, \varepsilon}(t))dt
  \leq C(t_2-t_1)+  \frac{1}{4}\left( d^{\frac{1}{2}}(X^{x,\alpha, \varepsilon}(t_1))-d^{\frac{1}{2}}(X^{x,\alpha, \varepsilon}(t_2) \right).
\end{equation}
\end{lemma}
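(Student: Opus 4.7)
The plan is to bound the integral by engineering a good dissipation identity for $d^{1/2}(X^{x,\alpha,\varepsilon})$. Formally, on any interval where $X^{x,\alpha,\varepsilon}(t)\notin\Gamma$, one has $d^{1/2}\circ X^{x,\alpha,\varepsilon}$ smooth, and using \eqref{traj-eps} together with \eqref{propri-d},
\begin{equation*}
\frac{d}{dt}d^{1/2}(X^{x,\alpha,\varepsilon})
= \frac{\langle \nabla d(X^{x,\alpha,\varepsilon}),f(X^{x,\alpha,\varepsilon},\alpha)\rangle}{2\,d^{1/2}(X^{x,\alpha,\varepsilon})}
-\frac{|\nabla d(X^{x,\alpha,\varepsilon})|^2}{2\varepsilon\,d^{1/2}(X^{x,\alpha,\varepsilon})}.
\end{equation*}
The first term is controlled by $|X^{x,\alpha,\varepsilon}|\,|f|_\infty$ since $|\langle\nabla d,f\rangle|\leq |\nabla d||f|_\infty = 2d^{1/2}|X||f|_\infty$, and Lemma~\ref{lem-exis} gives $|X^{x,\alpha,\varepsilon}(t)|\leq |x|+\sqrt2|f|_\infty T$ on $[0,T]$. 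For the negative dissipative term, I will combine $|\nabla d|^2=4d|X|^2$ with the elementary inequality $d^{1/2}(x)|x|^2=|x_1x_2|(x_1^2+x_2^2)\geq 2x_1^2x_2^2=2d(x)$ (AM–GM) to get $|\nabla d(X^{x,\alpha,\varepsilon})|^2/(2d^{1/2}(X^{x,\alpha,\varepsilon}))\geq 4d(X^{x,\alpha,\varepsilon})$. Setting $C_1:=(|x|+\sqrt2|f|_\infty T)|f|_\infty$, this yields the pointwise bound
\begin{equation*}
\frac{d}{dt}d^{1/2}(X^{x,\alpha,\varepsilon})\leq C_1-\frac{4}{\varepsilon}d(X^{x,\alpha,\varepsilon})
\end{equation*}
away from $\Gamma$, and the claim follows by integration on $[t_1,t_2]$ and dividing by~$4$, taking $C:=C_1/4$.

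The main obstacle is that $d^{1/2}(x)=|x_1x_2|$ is not $C^1$ on $\Gamma$, so the chain rule above cannot be used at times where $X^{x,\alpha,\varepsilon}$ touches the network. I will handle this by observing that $d^{1/2}$ is locally Lipschitz and $X^{x,\alpha,\varepsilon}\in W^{1,\infty}_{\rm loc}[0,\infty)$ by Lemma~\ref{lem-exis}, so $t\mapsto d^{1/2}(X^{x,\alpha,\varepsilon}(t))$ is absolutely continuous and the fundamental theorem of calculus applies. Split $[t_1,t_2]=E\cup([t_1,t_2]\setminus E)$, with $E:=\{t\in[t_1,t_2]: X^{x,\alpha,\varepsilon}(t)\notin\Gamma\}$ open. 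On $[t_1,t_2]\setminus E$ one has $d^{1/2}(X^{x,\alpha,\varepsilon})\equiv 0$, hence its derivative vanishes a.e.\ there by the standard fact that an absolutely continuous function whose level set carries positive measure has zero derivative a.e.\ on that level set; moreover, $d(X^{x,\alpha,\varepsilon})$ vanishes there too, so this portion contributes nothing to either side. On $E$ the pointwise computation above is valid.

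Putting it together, I integrate the differential inequality on $[t_1,t_2]$ and use $m(E\cap[t_1,t_2])\leq t_2-t_1$ to obtain
\begin{equation*}
d^{1/2}(X^{x,\alpha,\varepsilon}(t_2))-d^{1/2}(X^{x,\alpha,\varepsilon}(t_1))\leq C_1(t_2-t_1)-\frac{4}{\varepsilon}\int_{t_1}^{t_2}d(X^{x,\alpha,\varepsilon}(t))dt,
\end{equation*}
which is \eqref{estimintegrdxepsilon} after rearrangement. The constant $C=C_1/4$ has the required dependence on $T$, $|f|_\infty$, and $|x|$, and no dependence on $\varepsilon$ or $\alpha$, which is exactly the uniformity needed in the sequel.
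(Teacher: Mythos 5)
Your proof is correct, and the core computation is the same differential inequality that the paper derives: via the chain rule, $\langle x,\nabla d(x)\rangle = 4d(x)$, $|\nabla d(x)|=2d^{1/2}(x)|x|$ and $|x|\geq\sqrt2\,d^{1/4}(x)$ one gets, away from $\Gamma$,
\[
\frac{d}{dt}\,d^{1/2}(X^{x,\alpha,\varepsilon})\leq C_1 - \frac{4}{\varepsilon}\,d(X^{x,\alpha,\varepsilon}),
\]
with $C_1$ depending only on $T,|f|_\infty,|x|$ via \eqref{bornXeps}. Where you genuinely depart from the paper is in how the inequality is integrated across times where $X^{x,\alpha,\varepsilon}$ sits on $\Gamma$ and $d^{1/2}$ fails to be differentiable. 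The paper decomposes $[t_1,t_2]$ into the closed set where $d(X^{x,\alpha,\varepsilon})=0$ and a countable union of open intervals $(a_n^\varepsilon,b_n^\varepsilon)$ where it is positive, integrates on compact subintervals of each $(a_n^\varepsilon,b_n^\varepsilon)$, passes to the endpoints, and sums the resulting telescoping pieces, treating separately the two boundary intervals touching $t_1$ and $t_2$. You instead observe that $d^{1/2}$ is locally Lipschitz and $X^{x,\alpha,\varepsilon}\in W^{1,\infty}_{\rm loc}$, so $t\mapsto d^{1/2}(X^{x,\alpha,\varepsilon}(t))$ is absolutely continuous; the pointwise differential inequality holds a.e.\ on the open set $E=\{X^{x,\alpha,\varepsilon}\notin\Gamma\}$ by the chain rule, and a.e.\ on the complement because the derivative of an absolutely continuous function vanishes a.e.\ on any of its level sets. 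Integrating over $[t_1,t_2]$ in one stroke then gives \eqref{estimintegrdxepsilon}. Your route is shorter and avoids the combinatorial bookkeeping of the decomposition and the endpoint limits; both arguments are equally rigorous, and the approaches buy essentially the same thing (the AM--GM step $d^{1/2}(x)|x|^2\geq 2d(x)$ you use is exactly the inequality $|x|\geq\sqrt2\,d^{1/4}(x)$ from \eqref{propri-d} in disguise). The only cosmetic discrepancy is a factor $\sqrt2$ in the bound on $\langle\nabla d,f\rangle$, which comes from the paper's convention for $|f|_\infty$ and is harmless since it only affects the (unspecified) constant $C$.
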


\begin{proof}
Let $a<b$ such that $[a,b]\subset (a_n^\e,b_n^\e)$ for some $n$.
Since for all $t\in (a_n^\e,b_n^\e)$, $d(X^{x,\alpha, \varepsilon}(t))\not= 0$, one can write,
using~\eqref{propri-d},
\begin{eqnarray*} 
-\frac{d}{dt}d^{\frac{1}{2}}(X^{x,\alpha, \varepsilon}(t))
&=&-\frac{1}{2}\langle \nabla d(X^{x,\alpha, \varepsilon}(t)), f(X^{x,\alpha, \varepsilon}(t),t)\rangle d^{-\frac{1}{2}}(X^{x,\alpha, \varepsilon}(t))
\\
& &\hspace{2cm}+ \frac{1}{2\varepsilon}|\nabla d(X^{x,\alpha, \varepsilon}(t)|^2d^{-\frac{1}{2}}(X^{x,\alpha, \varepsilon}(t))\\
&\geq& -\sqrt{2}|f|_\infty |X^{x,\alpha, \varepsilon}(t)| +\frac{4}{\varepsilon}d(X^{x,\alpha, \varepsilon}(t)).
\end{eqnarray*}
It follows, thanks to~\eqref{bornXeps}, that
\begin{eqnarray}\label{ab365}
  && \frac{1}{\varepsilon}\int_{a}^{b}d(X^{x,\alpha, \varepsilon}(t))dt \leq C(b-a)
  +\frac{1}{4}\left( d^{\frac{1}{2}}(X^{x,\alpha, \varepsilon}(a)) -  d^{\frac{1}{2}}(X^{x,\alpha, \varepsilon}(b))\right).
\end{eqnarray} 
By passing to the limit $a\to (a_n^\e)^+$ and $b\to (b_n^\e)^-$, we deduce that, for every
$(a_n^\e, b_n^\e)$,
\begin{eqnarray}\label{ab123}
  && \frac{1}{\varepsilon}\int_{a_n^\e}^{b_n^\e}d(X^{x,\alpha, \varepsilon}(t))dt \leq C(b_n^\e- a_n^\e).
\end{eqnarray}

Let now $0\leq t_1\leq t_2\leq T$ and denote by $I$ the subset of indices $n$ such that
$(a_n^\e, b_n^\e)\subset [t_1,t_2]$ for all $n\in I$. Set $\underline{a} =\inf_{n\in I}a_n^\e$
and $\overline{b} =\sup_{n\in I}b_n^\e$ and notice that, by continuity,
$d(X^{x,\alpha, \varepsilon}(\underline{a}))=d(X^{x,\alpha, \varepsilon}(\overline{b}))=0$.
We write
\begin{eqnarray}\label{equa-4-terms}
  \int_{t_1}^{t_2}d(X^{x,\alpha, \varepsilon}(t))dt
  &=&  \int_{t_1}^{\underline{a}}d(X^{x,\alpha, \varepsilon}(t))dt
  + \sum_{n\in I}\int_{a_n^\e}^{b_n^\e}d(X^{x,\alpha, \varepsilon}(t))dt
  \\
  && \hspace*{.cm} + \int_{\overline{b}}^{t_2}d(X^{x,\alpha, \varepsilon}(t))dt + \int_{\mathcal{Z}(X^{x,\alpha, \e})\cap [a,b]}d(X^{x,\alpha, \varepsilon}(t))dt\nonumber
\end{eqnarray}
and we estimate the terms in the right hand side.

Obviously, the last integral is zero and, from~\eqref{ab123},
\begin{eqnarray*}
&& \sum_{n\in I} \frac{1}{\varepsilon}\int_{a_n^\e}^{b_n^\e}d(X^{x,\alpha, \varepsilon}(t))dt\leq  C \sum_{n\in I} (b_n^\e- a_n^\e).
\end{eqnarray*}

To estimate the first integral of the right hand side in (\ref{equa-4-terms}), we consider two cases:
either $[t_1,\underline{a}]\subset \mathcal{Z}(X^{x,\alpha, \e})$
and the integral is zero,
or there exists $n_0$ such that $a_{n_0}^\e\leq t_1\leq b_{n_0}^\e=\underline{a}$. From~\eqref{ab365}, we obtain
\begin{eqnarray*}
  \frac{1}{\varepsilon}\int_{t_1}^{\underline{a}}d(X^{x,\alpha, \varepsilon}(t))dt
  &\leq & C(\underline{a}-t_1)
  +\frac{1}{4}\left( d^{\frac{1}{2}}(X^{x,\alpha, \varepsilon}(t_1)) -  d^{\frac{1}{2}}(X^{x,\alpha, \varepsilon}(\underline{a}))\right)\\
   &\leq & C(\underline{a}-t_1)
  +\frac{1}{4}d^{\frac{1}{2}}(X^{x,\alpha, \varepsilon}(t_1)).
\end{eqnarray*} 
With similar arguments, we obtain
\begin{eqnarray*}
  \frac{1}{\varepsilon}\int_{\overline{b}}^{t_2}d(X^{x,\alpha, \varepsilon}(t))dt
   &\leq & C(t_2-\overline{b})
  -\frac{1}{4}d^{\frac{1}{2}}(X^{x,\alpha, \varepsilon}(t_2)).
\end{eqnarray*} 

Putting all the estimates together and noticing that
$\underline{a}-t_1 +  \sum_{n\in I} (b_n^\e- a_n^\e) + t_2-\overline{b} \leq t_2- t_1$, we
finally obtain~\eqref{estimintegrdxepsilon}.
\end{proof}

A subset $S\subset\R^2$ is said to be {\em invariant for \eqref{traj-eps}} if,
for any~$t\geq0$, $X^{x,\alpha, \varepsilon}(t)\in S$. We recall that a sufficient condition to be invariant for
a subset $S$ with $C^1$ boundary is 
\begin{equation}\label{suff-inv}
\langle F^\varepsilon(x,a),n_{{\partial S}}(x)\rangle\leq 0, \quad \text{for all $x\in\partial S$, $a\in A$,}
\end{equation}
where $F^\varepsilon$ is defined in~\eqref{traj-eps} and 
$n_{{\partial S}}(x)$ denotes the outward unit normal vector to $\partial S$ at $x$.

We first investigate invariance properties of the sets
\begin{equation}\label{sublevel-set}
Z(\lambda):=\{x\in\R^2: d(x)\leq\lambda\},\ \lambda>0,
\end{equation}
and define the {\em entry time in~$Z(\lambda)$} from an initial position~$x\in\R^2$ by
\begin{equation}\label{entry-time}
t^{x,\alpha,\varepsilon}(\lambda):=\inf\{t\geq0: X^{x,\alpha, \varepsilon}(t)\in Z(\lambda)\}
\end{equation}
if the trajectory $X^{x,\alpha, \varepsilon}(\cdot)$ reaches $Z(\lambda)$, and $t^{x,\alpha,\varepsilon}(\lambda)=\infty$ otherwise.

\begin{proposition}[Invariant subsets and entry times]\label{prop-inv}
  Assume~\eqref{hyp-f-ell} and let
$\varepsilon >0$, $\alpha\in \mathcal{A}$ and $x\in\R^2$.
\begin{enumerate}

\item For any $0\leq t_1\leq t_2$, we have 
\begin{equation}\label{ineq123}
d(X^{x,\alpha, \varepsilon}(t_2))-d(X^{x,\alpha, \varepsilon}(t_1))\leq(t_2-t_1)\frac{|f|_\infty^2}{2}\varepsilon.
\end{equation}

\item If
\begin{eqnarray*} 
&& \lambda\geq\kappa \varepsilon^{4/3}, \text{ with $\kappa:=2^{-4/3}|f|_\infty^{4/3}$,}
\end{eqnarray*}
then $Z(\lambda)$ 
is invariant for~\eqref{traj-eps}. In particular,
\begin{eqnarray}\label{d4tiers} 
  && d(X^{x,\alpha, \varepsilon}(t))\leq \kappa\varepsilon^{4/3},
  \qquad \text{for all $x\in\Gamma$ and $t\geq 0$.}
\end{eqnarray}
Moreover, if $d(x)>\kappa\varepsilon^{4/3}$, then,
for any $0\leq t_1 < t_2 \leq t^{x,\alpha,\varepsilon}(\kappa\varepsilon^{4/3})$,  we have
\begin{equation}\label{decroiss1}
d(X^{x,\alpha, \varepsilon}(t_2)) \leq  d(X^{x,\alpha, \varepsilon}(t_1)).
\end{equation}

\item
For any $\gamma <1$ and
$\varepsilon \leq (4|f|_\infty/7)^{-1/(1-\gamma)}$, one has
\begin{eqnarray}\label{reach-4-3} 
&&  t^{x,\alpha,\varepsilon}( \varepsilon^{4\gamma/3})\leq 4 d(x)^{1/4} \varepsilon^{1-\gamma}.
\end{eqnarray}


\item For any $\lambda >0$ and $T>0$, there exists $C=C(T,|f|_\infty,|x|)$ such that
\begin{eqnarray}\label{reach-l} 
&&  t^{x,\alpha,\varepsilon}(\lambda)\leq C \frac{\varepsilon}{\lambda}.
\end{eqnarray}
  
\end{enumerate}
\end{proposition}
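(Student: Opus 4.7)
The four items share a common engine, namely the chain rule applied to the locally Lipschitz map $t \mapsto d(X^{x,\alpha,\varepsilon}(t))$, which yields the a.e.\ identity $\frac{d}{dt}d(X^{x,\alpha,\varepsilon}) = \langle \nabla d(X^{x,\alpha,\varepsilon}), f(X^{x,\alpha,\varepsilon},\alpha)\rangle - \frac{1}{\varepsilon}|\nabla d(X^{x,\alpha,\varepsilon})|^2$. For (1), Young's inequality $|\langle \nabla d, f\rangle| \leq \frac{\varepsilon|f|_\infty^2}{2} + \frac{|\nabla d|^2}{2\varepsilon}$ absorbs half of the dissipative term, leaving $\dot d \leq \frac{\varepsilon|f|_\infty^2}{2}$; integration between $t_1$ and $t_2$ produces~\eqref{ineq123}. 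For (2), I apply the sufficient invariance criterion~\eqref{suff-inv} to $S = Z(\lambda)$, whose outward unit normal on $\partial Z(\lambda)$ is $\nabla d/|\nabla d|$; the criterion reduces to $|\nabla d| \geq \varepsilon|f|_\infty$ on $\{d = \lambda\}$, which by $|\nabla d| \geq 2\sqrt 2\, d^{3/4}$ from~\eqref{propri-d} holds exactly when $\lambda \geq \kappa\varepsilon^{4/3}$ for the stated $\kappa$. Invariance together with $d(x) = 0$ for $x\in\Gamma$ yields~\eqref{d4tiers}, and the same sign analysis gives $\dot d \leq 0$ throughout $\{d > \kappa\varepsilon^{4/3}\}$, whence~\eqref{decroiss1}.

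For (3), the key test function is $\phi(t) := d(X^{x,\alpha,\varepsilon}(t))^{1/4}$, a substitution dictated by the scaling $|\nabla d|^2 \geq 8\, d^{3/2}$ available in~\eqref{propri-d}. On $\{d \geq \varepsilon^{4\gamma/3}\}$, the bound $|\nabla d| \geq 2\sqrt 2\,\varepsilon^\gamma$ combined with the stated smallness of $\varepsilon$ forces $|\nabla d|/\varepsilon$ to dominate $|f|_\infty$ by a fixed factor, so that a Young-type inequality absorbs the drift and leaves $\dot d \leq -c|\nabla d|^2/\varepsilon$ for an explicit $c > 0$; dividing by $4 d^{3/4}$ and invoking $|\nabla d|^2 \geq 8 d^{3/2}$ transforms this into $\dot\phi \leq -c'\phi^3/\varepsilon$, and since $\phi \geq \varepsilon^{\gamma/3}$ on the same set, $\dot\phi \leq -c'\varepsilon^{\gamma-1}$. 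Integrating from $0$ up to $t^* := t^{x,\alpha,\varepsilon}(\varepsilon^{4\gamma/3})$ and using $\phi(t^*) \geq 0$ gives $c' t^*\varepsilon^{\gamma-1} \leq \phi(0) = d(x)^{1/4}$, whence~\eqref{reach-4-3} after tracking constants.

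For (4), I reason by contradiction: if $t^{x,\alpha,\varepsilon}(\lambda) > \tau$ with $\tau \in (0,T]$, then $d(X^{x,\alpha,\varepsilon}(t)) \geq \lambda$ on $[0,\tau]$, so $\lambda\tau \leq \int_0^\tau d(X^{x,\alpha,\varepsilon})\,dt$. Lemma~\ref{estim-int-dist}, applied with $t_1 = 0$ and $t_2 = \tau$, provides the opposite-sign estimate $\frac{1}{\varepsilon}\int_0^\tau d(X^{x,\alpha,\varepsilon})\,dt \leq C_L\tau + \frac{1}{4}d(x)^{1/2}$ with $C_L = C_L(T, |f|_\infty, |x|)$; combining the two inequalities yields $(\lambda/\varepsilon - C_L)\tau \leq d(x)^{1/2}/4$, from which $\tau \leq C\varepsilon/\lambda$ follows once $\varepsilon$ is small enough that $\lambda/\varepsilon \geq 2 C_L$, the remaining regime $\varepsilon \gtrsim \lambda$ being absorbed into $C$ via the trivial bound $\tau \leq T$. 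The delicate step is clearly (3): both the choice of test function $\phi = d^{1/4}$ and the precise smallness assumption on $\varepsilon$ are essential to produce a uniformly negative derivative of order $\varepsilon^{\gamma-1}$, and matching the constant $4$ in~\eqref{reach-4-3} requires a careful optimization in the Young-type inequality separating $\langle\nabla d, f\rangle$ from $|\nabla d|^2/\varepsilon$.
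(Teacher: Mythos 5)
Your proof is correct and follows essentially the same route as the paper's. Part (1) is the paper's $\sup_r\{\sqrt{2}|f|_\infty r - r^2/\varepsilon\}$ computation packaged as Young's inequality; part (2) is the same application of the invariance criterion~\eqref{suff-inv} together with the lower bound $|\nabla d|\geq 2\sqrt 2\,d^{3/4}$ (the paper bounds $\langle f,n\rangle$ by $\sqrt 2\,|f|_\infty$ rather than $|f|_\infty$, so if you track that convention your threshold gives the stated $\kappa$, otherwise you actually obtain a slightly smaller one); part (3) uses the same test function $d^{1/4}$ — the paper gets $-\dot d\geq\varepsilon^{\gamma-1}d^{3/4}$ by factoring $|\nabla d|$ directly rather than by Young, which is why their final constant is $4$, whereas your Young/absorption route will produce a different (in fact better) constant under a slightly different smallness threshold on $\varepsilon$, so ``matching the constant $4$'' is not really the issue — any explicit constant suffices; and part (4) is the paper's Chebyshev-style measure estimate restated as a contradiction, both resting on Lemma~\ref{estim-int-dist}. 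The only caveat, present in the paper too, is that the conclusion of (4) is only meaningful once $C\varepsilon/\lambda<T$, since otherwise one cannot rule out $t^{x,\alpha,\varepsilon}(\lambda)>T$; invoking ``the trivial bound $\tau\leq T$'' does not by itself close the large-$\varepsilon$ regime, but this is harmless for the intended use as $\varepsilon\to 0$.
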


\begin{remark} \
\begin{itemize}

\item[(i)]
  In the particular case when $X^{x,\alpha, \varepsilon}(t_1)\in \Gamma$,
  Estimate~\eqref{ineq123} allows to recover an estimate similar to~\eqref{estimintegrdxepsilon}.

\item[(ii)] 
  The results of (2) mean that, if we start inside $Z(\kappa\varepsilon^{4/3})$, we stay in $Z(\kappa\varepsilon^{4/3})$ forever, otherwise, the ``distance'' $t\mapsto d(X^{x,\alpha, \varepsilon}(t))$ is non-increasing  
  until the trajectory $t\to X^{x,\alpha, \varepsilon}(t)$ reaches  $Z(\kappa\varepsilon^{4/3})$.
  The dynamics in the layer $Z(\kappa\varepsilon^{4/3})$ is
  more complicated and depends strongly on the control $\alpha$ through $f$. In particular,
  when $\lambda < \kappa\varepsilon^{4/3}$, $Z(\lambda)$ may not be   invariant anymore.

\item[(iii)] By enlarging $Z(\kappa\varepsilon^{4/3})$
  a little bit into $Z(\varepsilon^{4\gamma/3})$, $\gamma <1$,
we are able to estimate precisely the entry time.

\end{itemize}

Finally, note that all these estimates are independent of the control.
They only depend on the magnitude of $f$.
\end{remark}

\begin{proof}
For a.e. $t>0$, we have,
using~\eqref{traj-eps},
\begin{eqnarray} 
\label{egal-dd}
\frac{d}{dt}d(X^{x,\alpha, \varepsilon})
&=&\langle \nabla d(X^{x,\alpha, \varepsilon}), f(X^{x,\alpha, \varepsilon},\alpha)\rangle
- \frac{1}{\varepsilon}|\nabla d(X^{x,\alpha, \varepsilon})|^2\\ \nonumber
&\leq& \sqrt{2}|f|_\infty | \nabla d(X^{x,\alpha, \varepsilon})| - \frac{1}{\varepsilon}|\nabla d(X^{x,\alpha, \varepsilon})|^2\\  \nonumber
&\leq & \sup_{r\geq 0} \{ \sqrt{2}|f|_\infty r - \frac{r^2}{\varepsilon}\} =\frac{|f|_\infty^2}{2}\varepsilon.
\end{eqnarray}
Hence, \eqref{ineq123} is obtained by integrating the above inequality on the interval  $[t_1,t_2]$.

We turn to the proof of (2). To prove that $Z(\lambda)$ is invariant for $\lambda$ big enough, we check~\eqref{suff-inv}.
For any $x\in \partial Z(\lambda)$ and $a\in A$, using~\eqref{traj-eps} and~\eqref{propri-d},
we have 
\begin{eqnarray}\label{ineg478}
  \langle F^\varepsilon(x,a),n_{{\partial Z(\lambda)}}(x)\rangle
  &=&  \langle F^\varepsilon(x,a),\frac{\nabla d(x)}{|\nabla d(x)|}\rangle\nonumber\\
  &=&  \langle f(x,a),\frac{\nabla d(x)}{|\nabla d(x)|}\rangle -\frac{1}{\e} |\nabla d(x)| \nonumber\\
  &\leq & \sqrt{2}|f|_\infty  -\frac{2}{\e} \sqrt{d(x)} |x| \nonumber\\
  &\leq & \sqrt{2}|f|_\infty  -\frac{2\sqrt{2}}{\e} \lambda^{3/4},
\end{eqnarray}
the last inequality following the implication
\begin{eqnarray}\label{boule-level-set}
d(x)\geq \lambda \ \Longrightarrow \ |x|\geq \sqrt{2}\lambda^{1/4}.
\end{eqnarray}
The right-hand side in \eqref{ineg478} is nonpositive provided
$\lambda \geq 2^{-4/3}|f|_\infty^{4/3} \varepsilon^{4/3} =\kappa\varepsilon^{4/3}$,
which proves that~$Z(\lambda)$ is invariant.

The proof of~\eqref{decroiss1} is a straightforward consequence of~\eqref{ineg478} for $\lambda >\kappa\varepsilon^{4/3}$,
since $\frac{d}{dt} d(X^{x,\alpha, \varepsilon})
= \langle F^\varepsilon(X^{x,\alpha, \varepsilon},\alpha),\nabla d(X^{x,\alpha, \varepsilon})\rangle$.

We now prove (3). Let $\gamma <1$. For
$0\leq t < t^{x,\alpha, \varepsilon}(\e^{4\gamma/3})$,
we have $d(X^{x,\alpha, \varepsilon}(t)) > \e^{4\gamma/3}$. Hence, from~\eqref{propri-d},
we get
\begin{eqnarray*} 
&& |\nabla d(X^{x,\alpha, \varepsilon})|\geq 2\sqrt{2}\, d(X^{x,\alpha, \varepsilon})^{3/4} \geq 2\sqrt{2}\, \e^{\gamma}.
\end{eqnarray*}
Therefore, from~\eqref{egal-dd},
\begin{eqnarray*}
-\frac{d}{dt}d(X^{x,\alpha, \varepsilon})
&\geq & |\nabla d(X^{x,\alpha, \varepsilon})| (\frac{|\nabla d(X^{x,\alpha, \varepsilon})|}{\e} - \sqrt{2} |f|_\infty )\\
&\geq &  2\sqrt{2}\, d(X^{x,\alpha, \varepsilon})^{3/4} (2\sqrt{2}\,\e^{\gamma -1}  - \sqrt{2} |f|_\infty )\\
&\geq &  \e^{\gamma -1} d(X^{x,\alpha, \varepsilon})^{3/4}
\end{eqnarray*}
provided $\e\leq (4 |f|_\infty/7)^{-1/(\gamma -1)}$. Integrating the previous ordinary differential inequation,
we get~\eqref{reach-4-3}.

\smallskip

We turn to the proof of~\eqref{reach-l}.
It is possible to deduce the result using computations as in
the proof of~(3) but we provide
an alternative proof, which is interesting in itself.
From Lemmas~\ref{lem-exis} and~\ref{estim-int-dist}, we infer
\begin{equation*}
\displaystyle\frac{1}{\varepsilon}\int_{{0}}^{{T}}d(X^{x,\alpha, \varepsilon}(t))dt\leq C:= (|x|+|f|_\infty)T+\frac{1}{4}  d^{\frac{1}{2}}(x).
\end{equation*}
For $\lambda >0$, set $I=I(x,\alpha,\e,T,\lambda):=\{t\in[0,T] : d(X^{x,\alpha, \varepsilon}(t))\geq\lambda \}$.
We have 
\begin{equation}\label{boundemeasure}
\text{meas}(I)\leq \frac{1}{\lambda}\int_{I}d(X^{x,\alpha, \varepsilon}(t))dt \leq C \frac{\varepsilon}{\lambda},
\end{equation}
where $\text{meas}(I)$ denotes the Lebesgue measure of the Borel set $I$.
It follows that $t^{x,\alpha,\varepsilon}(\lambda)\leq C \varepsilon\lambda^{-1}$.
\end{proof}

\begin{lemma}[Boundedness of the sequence $(X^{x,\alpha, \varepsilon}, k^{x,\alpha, \varepsilon})$
in $H_{\rm loc}^1(0,\infty)\times H_{\rm loc}^1(0,\infty)$]\label{local-unif-bound}
Assume~\eqref{hyp-f-ell}. Let $\varepsilon >0$, $\alpha\in \mathcal{A}$,  
$X^{x,\alpha, \varepsilon}$ be the solution of~\eqref{traj-eps}
and recall that $k^{x,\alpha, \varepsilon}$ is defined by~\eqref{def-k}. Then,
for all $0<\eta \leq T$, there exists $C=C(\eta, T, |f|_\infty)$ (independent of $\e$ and $\alpha$) such that
\begin{eqnarray*} 
&& \|X^{x,\alpha, \varepsilon}\|_{H^1 [\eta,\,T]},  \|k^{x,\alpha, \varepsilon}\|_{H^1 [\eta,\,T]} \leq C.
\end{eqnarray*}
If $x\in\Gamma$, then $C$ is also independent of $\eta$.
\end{lemma}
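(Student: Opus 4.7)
The plan is to reduce everything to an $L^2[\eta,T]$ bound on $\dot{k}^{x,\alpha,\varepsilon}$. Indeed, from $\dot{X}^{x,\alpha,\varepsilon}=f(X^{x,\alpha,\varepsilon},\alpha)-\dot{k}^{x,\alpha,\varepsilon}$ and Lemma~\ref{lem-exis}, $X^{x,\alpha,\varepsilon}$ is uniformly bounded on $[0,T]$, hence its $L^2$ norm is controlled and so is $\|\dot{X}^{x,\alpha,\varepsilon}\|_{L^2[\eta,T]}$ once we bound $\|\dot{k}^{x,\alpha,\varepsilon}\|_{L^2[\eta,T]}$. For $k^{x,\alpha,\varepsilon}$, integrating the ODE yields $k^{x,\alpha,\varepsilon}(t)=x-X^{x,\alpha,\varepsilon}(t)+\int_0^t f(X^{x,\alpha,\varepsilon},\alpha)ds$, so $\|k^{x,\alpha,\varepsilon}\|_{L^\infty[0,T]}\leq 2|x|+2\sqrt{2}|f|_\infty T+|f|_\infty T$, which controls $\|k^{x,\alpha,\varepsilon}\|_{L^2[\eta,T]}$.

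The heart of the argument is the energy identity obtained from~\eqref{egal-dd}: writing $\dot{k}^{x,\alpha,\varepsilon}=\tfrac{1}{\varepsilon}\nabla d(X^{x,\alpha,\varepsilon})=f(X^{x,\alpha,\varepsilon},\alpha)-\dot{X}^{x,\alpha,\varepsilon}$, one has $\tfrac{d}{dt}d(X^{x,\alpha,\varepsilon})=\langle\nabla d(X^{x,\alpha,\varepsilon}),f-\dot{k}^{x,\alpha,\varepsilon}\rangle$, so multiplying by $1/\varepsilon$ and integrating yields
\begin{equation*}
\int_\eta^T|\dot{k}^{x,\alpha,\varepsilon}(t)|^2\,dt
=\int_\eta^T\langle f(X^{x,\alpha,\varepsilon},\alpha),\dot{k}^{x,\alpha,\varepsilon}\rangle\,dt
+\frac{1}{\varepsilon}\bigl(d(X^{x,\alpha,\varepsilon}(\eta))-d(X^{x,\alpha,\varepsilon}(T))\bigr).
\end{equation*}
Dropping the nonpositive term $-d(X^{x,\alpha,\varepsilon}(T))/\varepsilon$ and applying Young's inequality absorbs the cross term: $\tfrac12\|\dot{k}^{x,\alpha,\varepsilon}\|_{L^2[\eta,T]}^2\leq \tfrac12|f|_\infty^2(T-\eta)+d(X^{x,\alpha,\varepsilon}(\eta))/\varepsilon$. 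Thus the whole question reduces to a uniform (in $\varepsilon$ and $\alpha$) bound on $d(X^{x,\alpha,\varepsilon}(\eta))/\varepsilon$.

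The main obstacle is precisely this last bound, and it is where the case distinction occurs. When $x\in\Gamma$, $d(x)=0$, and the inequality~\eqref{ineq123} of Proposition~\ref{prop-inv} gives directly $d(X^{x,\alpha,\varepsilon}(\eta))/\varepsilon\leq \eta|f|_\infty^2/2$, a bound that does not involve $\eta$ coming from the energy identity: taking $\eta=0$, $\|\dot{k}^{x,\alpha,\varepsilon}\|_{L^2[0,T]}^2\leq |f|_\infty^2 T+T|f|_\infty^2=2|f|_\infty^2 T$, which explains why $C$ is independent of $\eta$ in this case. For general $x$, I would split on $\varepsilon$: for $\varepsilon$ small enough (specifically $\varepsilon\leq\varepsilon_0(\eta,|x|,|f|_\infty)$ ensuring $4d(x)^{1/4}\varepsilon^{1/4}\leq\eta$ together with $\varepsilon\leq 1/\kappa^3$ so that $Z(\varepsilon)$ is invariant), Proposition~\ref{prop-inv}(3) applied with $\gamma=3/4$ shows that $X^{x,\alpha,\varepsilon}$ enters $Z(\varepsilon)$ before time $\eta$ and, by invariance, stays there, giving $d(X^{x,\alpha,\varepsilon}(\eta))/\varepsilon\leq 1$. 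For $\varepsilon>\varepsilon_0$, $\varepsilon$ is bounded below, so one uses the crude pointwise bound $|\dot{k}^{x,\alpha,\varepsilon}|=|\nabla d(X^{x,\alpha,\varepsilon})|/\varepsilon\leq 2d^{1/2}(X^{x,\alpha,\varepsilon})|X^{x,\alpha,\varepsilon}|/\varepsilon_0$ combined with the uniform bound on $|X^{x,\alpha,\varepsilon}|$ from~\eqref{bornXeps} and on $d(X^{x,\alpha,\varepsilon})$ (via~\eqref{ineq123} and the invariance of appropriate sublevel sets from Proposition~\ref{prop-inv}(2)) to conclude that $\|\dot{k}^{x,\alpha,\varepsilon}\|_{L^2[0,T]}$ is bounded by a constant depending only on $T$, $|f|_\infty$, $|x|$ and $\varepsilon_0$, hence on $\eta$, $T$, $|f|_\infty$ and $|x|$. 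Combining both regimes yields the claimed uniform $H^1$ bound.
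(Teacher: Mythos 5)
Your proof is correct, and it follows the same overall strategy as the paper: an energy identity from testing the ODE, plus the entry-time estimates of Proposition~\ref{prop-inv} to control the boundary term $d(X^{x,\alpha,\varepsilon}(\cdot))/\varepsilon$. The one genuine difference is in how that boundary term is handled. You test against $\dot{k}^{x,\alpha,\varepsilon}$ rather than against $\dot{X}^{x,\alpha,\varepsilon}$ as the paper does, but since $\dot{X}^{x,\alpha,\varepsilon}=f-\dot{k}^{x,\alpha,\varepsilon}$ and $f$ is bounded, these two computations are interchangeable and produce exactly the same key term $\frac{1}{\varepsilon}\bigl(d(X^{x,\alpha,\varepsilon}(t_1))-d(X^{x,\alpha,\varepsilon}(T))\bigr)$. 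Where you diverge is the case $x\notin\Gamma$: the paper integrates on $[t_1,T]$ with the clever choice $t_1=t^{x,\alpha,\varepsilon}(\lambda)$ for $\lambda=C\varepsilon/\eta$, so that Proposition~\ref{prop-inv}(4) gives $t_1\leq\eta$ and $d(X^{x,\alpha,\varepsilon}(t_1))/\varepsilon\leq C/\eta$ in one line, uniformly in $\varepsilon$; you instead use Proposition~\ref{prop-inv}(3) with $\gamma=3/4$ to get a uniform bound $d(X^{x,\alpha,\varepsilon}(\eta))/\varepsilon\leq 1$ only for $\varepsilon\leq\varepsilon_0(\eta,|x|,|f|_\infty)$, and then patch the large-$\varepsilon$ regime with a crude pointwise bound on $|\dot{k}^{x,\alpha,\varepsilon}|$. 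Both routes are valid; the paper's choice of $\lambda$ proportional to $\varepsilon/\eta$ avoids the case split entirely and is cleaner, while your version makes more transparent how the $\eta$-dependence enters through the threshold $\varepsilon_0$. Note also that the resulting constant depends on $|x|$ in both proofs (already the paper's $C$ in Proposition~\ref{prop-inv}(4) does), even though the lemma's statement suppresses it.
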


\begin{proof}
The boundedness of $X^{x,\alpha, \varepsilon}$ in $L^\infty [0,T]$ for every $T>0$ is given by~\eqref{bornXeps}.
It follows from the ODE~\eqref{traj-eps} and the estimate~\eqref{bornXeps} that, for all $t\geq 0$,
\begin{eqnarray*} 
  |k^{x,\alpha, \varepsilon}(t)|&\leq&  |k^{x,\alpha, \varepsilon}(0)|+ \int_0^t |f(X^{x,\alpha, \varepsilon}(s),\alpha(s))|ds + |X^{x,\alpha, \varepsilon}(t)| + |X^{x,\alpha, \varepsilon}(0)|\\
  &\leq& 2|x|+(1+\sqrt{2})|f|_\infty t,
\end{eqnarray*}
which gives an $L^\infty$-bound, and therefore an $L^2$-bound for $k^{x,\alpha, \varepsilon}$ in~$[0,T]$ which is independent of $\alpha$ and $\varepsilon$.

We turn to the estimates of the derivatives.
From Equation~\eqref{traj-eps}, we have, for a.e. $0\leq t\leq T$,
\begin{eqnarray*}
|\dot{X}^{x,\alpha, \varepsilon}(t)|^2
&=& \langle \dot{X}^{x,\alpha, \varepsilon}(t), f(X^{x,\alpha, \varepsilon}(t),t) \rangle
  -\frac{1}{\varepsilon}\langle \dot{X}^{x,\alpha, \varepsilon}(t), \nabla d(X^{x,\alpha, \varepsilon}(t)) \rangle  \\
&\leq &\sqrt{2}|f|_\infty |\dot{X}^{x,\alpha, \varepsilon}(t)|-\frac{1}{\varepsilon}\frac{d}{dt}d(X^{x,\alpha, \varepsilon}(t)).
\end{eqnarray*}
Using Young's inequality and
integrating on any interval $[t_1,T]\subset [0,T]$ leads to
\begin{eqnarray*}
  && \frac{1}{2} \int_{t_1}^{T} |\dot{X}^{x,\alpha, \varepsilon}(t)|^2dt
  \leq |f|_\infty^2(T-t_1)+ \frac{1}{\varepsilon}\left( d(X^{x,\alpha, \varepsilon}(t_1))- d(X^{x,\alpha, \varepsilon}(T))\right).
\end{eqnarray*}

First, if $x\in \Gamma$, then the previous inequality with $t_1=0$ gives the uniform bound
\begin{eqnarray*}
  && \frac{1}{2} \int_{0}^{T} |\dot{X}^{x,\alpha, \varepsilon}(t)|^2dt
  \leq |f|_\infty ^2T
\end{eqnarray*}
on the whole interval $[0,T]$.

When  $x\not\in \Gamma$ \emph{i.e.}, $d(x)>0$, for any $0<\eta \leq T$, we set $\lambda=C\e \eta^{-1}$ and
choose $t_1= t^{x,\alpha,\varepsilon}(\lambda)\leq C\e \lambda^{-1}=\eta$ (see Proposition~\ref{prop-inv}),
which gives
\begin{eqnarray*}
  \frac{1}{2} \int_{\eta}^{T} |\dot{X}^{x,\alpha, \varepsilon}(t)|^2dt
  &\leq & \frac{1}{2} \int_{t^{x,\alpha,\varepsilon}(\lambda)}^{T} |\dot{X}^{x,\alpha, \varepsilon}(t)|^2dt\\
  &\leq& |f|_\infty^2 T+ \frac{1}{\varepsilon} d(X^{x,\alpha, \varepsilon}(t^{x,\alpha,\varepsilon}(\lambda)))\\
  &\leq & |f|_\infty^2 T+\frac{C}{\eta}.
\end{eqnarray*}
The bound for $\dot{k}^{x,\alpha, \varepsilon}$ comes from  Equation~\eqref{traj-eps},
$\dot{X}^{x,\alpha, \varepsilon}=f-\dot{k}^{x,\alpha, \varepsilon}$.
Lemma \ref{local-unif-bound} is proved.
\end{proof}

\begin{lemma}[Compactness of the sequence of trajectories]\label{compactnesslemma}
Assume~\eqref{hyp-f-ell}. Then, for every
$\alpha\in \mathcal{A}$ and $x\in\R^2$,
there exists some $(X^{x,\alpha},k^{x,\alpha})\in H_{\rm loc}^1(0,\infty)\times H_{\rm loc}^1(0,\infty)$
such that, up to extraction, the sequence  $(X^{x,\alpha, \varepsilon}, k^{x,\alpha, \varepsilon})$ converges
pointwisely on $[0,\infty)$ and
locally uniformly on~$(0,\infty)$, to~$(X^{x,\alpha},k^{x,\alpha})$, as $\varepsilon\to 0$.
When $x\in\Gamma$,  $(X^{x,\alpha},k^{x,\alpha})\in H_{\rm loc}^1[0,\infty)\times H_{\rm loc}^1[0,\infty)$
and the convergence is  locally uniform on~$[0,\infty)$.
\end{lemma}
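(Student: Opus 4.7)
The strategy is to combine the uniform $H^1$-bounds of Lemma~\ref{local-unif-bound} with the one-dimensional Sobolev compact embedding $H^1 \hookrightarrow C^0$ on bounded intervals, and then perform a standard diagonal extraction. More precisely, I would proceed as follows.

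Fix an arbitrary sequence $\e_m \to 0^+$ and introduce the nested exhaustion $I_n := [1/n, n]$ of $(0,\infty)$ in the general case $x \in \R^2$, or $I_n := [0, n]$ in the case $x \in \Gamma$. By Lemma~\ref{local-unif-bound}, for each $n$ there exists a constant $C_n$ independent of $m$ (and of $\alpha$) such that
\[
\|X^{x,\alpha,\e_m}\|_{H^1(I_n)} + \|k^{x,\alpha,\e_m}\|_{H^1(I_n)} \leq C_n.
\]
Since $H^1(I_n)$ embeds compactly into $C(I_n)$ (Rellich--Kondrachov, or equivalently Morrey's inequality $H^1 \hookrightarrow C^{0,1/2}$ in dimension one combined with Arzelà--Ascoli) and is reflexive, from any subsequence of $(\e_m)$ I can extract a further subsequence along which both components converge uniformly on $I_n$ and weakly in $H^1(I_n)$ to a pair belonging to $H^1(I_n)\times H^1(I_n)$.

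Next, I apply a Cantor diagonal extraction on the increasing family $I_1 \subset I_2 \subset \cdots$: at the $n$-th stage I extract from the subsequence produced at stage $n-1$ a sub-subsequence converging on $I_n$, and the diagonal subsequence $(\e_{m_k})_k$ then converges on every $I_n$, hence on every compact subset of $(0,\infty)$ (respectively of $[0,\infty)$ when $x \in \Gamma$). Denote the locally uniform limit by $(X^{x,\alpha}, k^{x,\alpha})$. Weak lower semicontinuity of the $H^1$-norm on each $I_n$ transfers the bound to the limit, so
\[
\|X^{x,\alpha}\|_{H^1(I_n)} + \|k^{x,\alpha}\|_{H^1(I_n)} \leq C_n \quad \text{for every } n,
\]
which places $(X^{x,\alpha}, k^{x,\alpha})$ in $H_{\rm loc}^1(0,\infty)\times H_{\rm loc}^1(0,\infty)$ (respectively in $H_{\rm loc}^1[0,\infty)\times H_{\rm loc}^1[0,\infty)$ when $x\in\Gamma$), yielding the regularity part of the statement.

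To finish the pointwise convergence on $[0,\infty)$ in the general case, the only point left is $t=0$: but $X^{x,\alpha,\e}(0)=x$ and $k^{x,\alpha,\e}(0)=0$ are independent of $\e$, so the sequences converge trivially at $t=0$ to $x$ and $0$ respectively. I do not foresee a serious obstacle: the argument is textbook Sobolev compactness plus diagonal extraction. The only mildly delicate point is that the constant in Lemma~\ref{local-unif-bound} degenerates as $\eta \to 0^+$ when $x \notin \Gamma$, which is precisely why the family $[1/n,n]$ is used and why only pointwise (rather than uniform) information is recovered at the initial time.
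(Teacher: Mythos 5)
Your argument is correct and follows essentially the same route as the paper's proof: Lemma~\ref{local-unif-bound} gives the uniform $H^1$-bounds on the intervals $[1/n,n]$ (or $[0,n]$ for $x\in\Gamma$), the compact embedding $H^1\hookrightarrow C^0$ in dimension one combined with a diagonal extraction yields local uniform convergence, and the fixed initial data $X^{x,\alpha,\e}(0)=x$, $k^{x,\alpha,\e}(0)=0$ provide the pointwise convergence at $t=0$. You make the weak lower semicontinuity step explicit, which the paper leaves implicit, but this is the same proof.
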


\begin{proof}
Thanks to Lemma~\ref{local-unif-bound}, we use Rellich Theorem (see~\cite{brezis83})
together with a diagonal extraction procedure in the interval~$[n^{-1},n]$, $n\geq 1$.
This gives the local uniform convergence of a subsequence of
$(X^{x,\alpha, \varepsilon_n}, k^{x,\alpha, \varepsilon_n})$ to some
$(X^{x,\alpha},k^{x,\alpha})\in H_{\rm loc}^1(0,\infty)\times H_{\rm loc}^1(0,\infty)$
on $(0,\infty)$, as $\e_n\to 0$.
The pointwise convergence at $t=0$ comes from the fact that $X^{x,\alpha, \varepsilon}(0)=x$
and $k^{x,\alpha, \varepsilon}(0)=0$ for all $\e >0$.
Note that, since $H_{\rm loc}^1(0,\infty)\subset C(0,\infty)$, the limit is continuous
on $(0,\infty)$ with a possible discontinuity at $0^+$.

When $x\in \Gamma$, the $\varepsilon$-uniform estimates of Lemma~\ref{local-unif-bound}
hold up to $0$, so the convergence is locally uniform in $[0,\infty)$ (we perform
the diagonal extraction in  $[0,n]$, $n\geq 1$).
The limit is then in $H_{\rm loc}^1[0,\infty)\times H_{\rm loc}^1[0,\infty)$ and, in particular, is
continuous at $t=0$.
\end{proof}

\begin{lemma}\label{saut-sur-reseau}
Let $x\in\R^2$ and $X^{x,\alpha}\in \omega(x,\alpha)$, where~$\omega(x,\alpha)$ is defined by~\eqref{omega-lim}. 
\begin{enumerate}
\item For all $t>0$, $X^{x,\alpha}(t)\in\Gamma$ and~\eqref{ppte-skorokhod} holds true.
\item If $(X^{x,\alpha},k^{x,\alpha})$ is a subsequential limit of $(X^{x,\alpha, \varepsilon}, k^{x,\alpha, \varepsilon})$, then
\begin{equation}
\lim_{t\to0^+}X^{x,\alpha}(t)=\overline{x}
\quad \text{ and } \quad
\lim_{t\to0^+}k^{x,\alpha}(t)=x-\overline{x},
\end{equation}
where $\overline{x}$ is defined by~\eqref{proj-gamma}.
\end{enumerate}
\end{lemma}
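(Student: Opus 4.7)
For part (1), to show $X^{x,\alpha}(t) \in \Gamma$ for fixed $t > 0$, the plan is to pick $\gamma \in (3/4, 1)$ and use Proposition \ref{prop-inv}: part (3) provides $t^{x,\alpha,\e}(\e^{4\gamma/3}) \leq 4 d(x)^{1/4} \e^{1-\gamma} < t$ for $\e$ small, and since $\e^{4\gamma/3} \geq \kappa \e^{4/3}$ for $\e$ small (because $\gamma < 1$), part (2) makes $Z(\e^{4\gamma/3})$ invariant, so $d(X^{x,\alpha,\e}(t)) \leq \e^{4\gamma/3} \to 0$. Continuity of $d$ together with pointwise convergence along the extracted subsequence yields $d(X^{x,\alpha}(t)) = 0$. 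The Skorokhod relation \eqref{ppte-skorokhod} then follows by passing to the limit in the integrated form of \eqref{traj-eps}, the nonlinear integrand being handled by dominated convergence (using the Lipschitz dependence of $f$ in $x$ and the bound $|f|_\infty$). For part (2), the $k^{x,\alpha}$ limit is an algebraic consequence of \eqref{ppte-skorokhod} and the $X^{x,\alpha}$ limit (since $\int_0^t f\, ds \to 0$ as $t \to 0^+$), so it suffices to prove $X^{x,\alpha}(t) \to \overline{x}$.

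For this last limit, the plan is to establish the stronger uniform-in-$\e$ statement: for every $\delta > 0$ there exist $\e_0, \eta > 0$ and $T_\delta$ such that $|X^{x,\alpha,\e}(t) - \overline{x}| < 3\delta$ whenever $\e \in (0, \e_0)$ and $\e T_\delta < t < \eta$. Fixing $t \in (0,\eta)$ and sending $\e \to 0$ along the extracted subsequence would then give $|X^{x,\alpha}(t) - \overline{x}| \leq 3\delta$ on $(0,\eta)$, whence the claim. I would work with the rescaled trajectory $Y^\e(s) := X^{x,\alpha,\e}(\e s)$, which satisfies $\dot Y^\e = \e f(Y^\e, \alpha(\e s)) - \nabla d(Y^\e)$ with $Y^\e(0) = x$. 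Since $Y^\e$ stays in a bounded subset of $\R^2$ by \eqref{bornXeps}, on which $\nabla d$ is Lipschitz, standard Gronwall gives $Y^\e \to Z^x$ uniformly on any compact $[0,T_\delta]$, so choosing $T_\delta$ with $|Z^x(T_\delta) - \overline{x}| < \delta$ (possible by \eqref{proj-gamma}) yields $|Y^\e(T_\delta) - \overline{x}| < 2\delta$ for $\e$ small.

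The main obstacle is to propagate $|Y^\e(T_\delta) - \overline{x}| < 2\delta$ over the full window $s \in [T_\delta, t/\e]$, whose upper end diverges as $\e \to 0$. The strategy is to exploit the local contractivity of $-\nabla d$ toward $\Gamma$ near $\overline{x}$, a standard feature of gradient descent on a nonnegative function vanishing on $\Gamma$ that is made quantitative in Appendix \ref{prop-dist}; the perturbation $\e f$ contributes only a drift of size at most $\e(s - T_\delta) \leq t$ in rescaled time, i.e., at most $\eta$ in the original scale, which can be made arbitrarily small. Combining contraction with this drift bound yields $|Y^\e(s) - \overline{x}| < 3\delta$ throughout the window, and evaluating at $s = t/\e$ gives the estimate on $X^{x,\alpha,\e}(t)$. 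Making the contraction quantitative, especially near the junction $O$ where $\nabla d$ vanishes to high order, is where I expect the argument to be most delicate, and is precisely what the properties of Appendix \ref{prop-dist} are designed to provide.
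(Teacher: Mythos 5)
Part (1), and your reduction of the $k^{x,\alpha}$-limit to the $X^{x,\alpha}$-limit via \eqref{ppte-skorokhod}, match the paper. The gap is in part (2), in propagating the closeness $|Y^\e(T_\delta)-\overline{x}|<2\delta$ over the window $[T_\delta,t/\e]$, whose length is of order $\e^{-1}$. You appeal to a ``local contractivity of $-\nabla d$ toward $\Gamma$'' that you say is ``made quantitative in Appendix~\ref{prop-dist}'', but that appendix only studies the unperturbed descent $Z^x$ and the continuity of $\phi_{d_\Gamma}$; it contains no estimate of the form ``trajectories of the $\e$-perturbed flow that are close to $\Gamma$ stay uniformly close to $\overline{x}$ over windows of length $\sim\e^{-1}$.'' Such an estimate is far from trivial: $-\nabla d$ only restores transversally to $\Gamma$ (on $\Gamma$ it vanishes, and off $\Gamma$ its tangential component is nonzero), the speed $\dot X^{x,\alpha,\e}$ is not uniformly bounded in $\e$ inside $Z(\e^{4\gamma/3})$ (one has $\e^{-1}|\nabla d|\lesssim\e^{2\gamma/3-1}$, which blows up since $\gamma<1$), and near $O$, where $\nabla d$ vanishes to third order and the tangential/transverse decomposition breaks down, the delicate case you flag is exactly the one for which no quantitative tool is on hand. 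The instability exhibited in Corollary~\ref{cor-instable} suggests that a uniform-in-$\e$ propagation estimate of the strength you want would be hard to obtain near $O$.

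The paper's Steps~2--4 take a different route that avoids any long-time propagation. Instead of tracking the vector $X^{x,\alpha,\e}(t)-\overline x$, the paper tracks the scalar $|X^{x,\alpha,\e}(t)|$: the sign property $\langle x,\nabla d(x)\rangle=4d(x)\geq 0$ from \eqref{propri-d} makes the penalization term in $\frac{d}{dt}|X^{x,\alpha,\e}|^2$ signed, which together with Lemma~\ref{estim-int-dist} yields the one-sided Lipschitz estimate \eqref{ineg592}; passing to the limit shows $(|X^{x,\alpha}(t)|)_{t>0}$ is Cauchy as $t\to 0^+$, hence converges to some $\ell\ge0$. A topological argument (two distinct cluster points on different branches would force the continuous, $\Gamma$-valued limit trajectory through $O$ at times tending to $0$, contradicting $\ell>0$) upgrades this to $X^{x,\alpha}(t)\to\hat x\in\Gamma$. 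Your Gr\"onwall comparison with $Z^x$ is then used, but only on the \emph{logarithmic} window $[0,t_\e]$ with $t_\e=-\tfrac{1}{L+1}\ln\e$, where the Gr\"onwall factor $e^{Lt_\e}$ is under control, giving $X^{x,\alpha,\e}(\e t_\e)\to\overline x$; plugging $t_1=\e t_\e$ into \eqref{ineg592} identifies $|\hat x|=|\overline x|$, and one last topological argument based on \eqref{ineq123} forces $\hat x=\overline x$. Replacing your uniform propagation with the scalar estimate \eqref{ineg592} plus these topological arguments is the missing idea.
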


\begin{proof} We prove (1).
Let $\gamma <1$. From Proposition~\ref{prop-inv},
we get that $Z(\e^{4\gamma/3})$ is invariant for $\e$ small
enough, and that
$t^{x,\alpha,\e}(\e^{4\gamma/3})= C\e^{1-\gamma}$ for some constant~$C$.
It follows that $d(X^{x,\alpha, \varepsilon}(t))\leq \e^{4\gamma/3}$,
for all $t\geq  C\e^{1-\gamma}$.
Keeping the same notation $\e$ for the extraction and
passing to the limit $\e\to 0$, we obtain $d(X^{x,\alpha}(t))=0$
for all $t>0$. This gives the result since $d$ satisfies~\eqref{cond-d}
by construction.
The proof of~\eqref{ppte-skorokhod} is a straightforward application
of Lebesgue's dominated convergence theorem to the integral
formulation of~\eqref{traj-eps}.

We turn to the proof of (2). Note that, once the first limit is established,
the second one is an immediate consequence of~\eqref{ppte-skorokhod}. 
We divide the proof in four steps.

{\it Step 1. $|X^{x,\alpha}(t)|$ has a limit $\ell\geq 0$ as $t\to 0^+$.}
From~\eqref{calc-norm2}, for every $T>0$, there exists a constant $C=C(T,|f|_\infty, |x|)$
such that, for all $0\leq t_1\leq t_2\leq T$,
\begin{eqnarray*}
&& \frac{1}{2} \left( |X^{x,\alpha, \varepsilon}(t_2)|^2- |X^{x,\alpha, \varepsilon}(t_1)|^2 \right) 
  = \int_{{t_1}}^{{t_2}} \langle X^{x,\alpha, \varepsilon}, f(X^{x,\alpha, \varepsilon},\alpha) \rangle dt
 - \frac{4}{\varepsilon}\int_{{t_1}}^{{t_2}}d(X^{x,\alpha, \varepsilon})dt.
\end{eqnarray*}
Thanks to Lemma~\ref{estim-int-dist} and~\eqref{bornXeps}, up to increasing $C$ if needed, it follows that 
\begin{eqnarray}\label{ineg592}
 && \left| |X^{x,\alpha, \varepsilon}(t_2)|^2- |X^{x,\alpha, \varepsilon}(t_1)|^2 \right|\\  \nonumber
  &\leq&   2\sqrt{2}|f|_\infty \int_{{t_1}}^{{t_2}}|X^{x,\alpha, \varepsilon}|dt
  + 8C(t_2-t_1) + 2(d^{\frac{1}{2}}(X^{x,\alpha, \varepsilon}(t_1))-d^{\frac{1}{2}}(X^{x,\alpha, \varepsilon}(t_2)))\\  \nonumber
 &\leq& C(t_2-t_1)  + 2(d^{\frac{1}{2}}(X^{x,\alpha, \varepsilon}(t_1))-d^{\frac{1}{2}}(X^{x,\alpha, \varepsilon}(t_2))).
\end{eqnarray}
Recalling that  $X^{x,\alpha, \varepsilon}(t)\to X^{x,\alpha}(t)\in\Gamma$ as $\e\to 0$, for $t>0$,
we arrive at
\begin{eqnarray*}
&& \left| |X^{x,\alpha}(t_2)|^2- |X^{x,\alpha}(t_1)|^2 \right|
\leq  C(t_2-t_1) \quad \text{ for all $0<t_1\leq t_2\leq T$.}
\end{eqnarray*}
We conclude that $(|X^{x,\alpha}(t)|)_{t>0}$ is a Cauchy sequence as $t\to 0^+$, thus
\begin{eqnarray}\label{limit-norme}
\lim_{t\to 0^+} |X^{x,\alpha}(t)|=:\ell 
\end{eqnarray}
exists, which may be different from $|X^{x,\alpha}(0)|$
since $X^{x,\alpha}(t)$ may have a discontinuity at $t=0$.

{\it Step 2. $X^{x,\alpha}(t)$ has a limit $\hat{x}\in\Gamma$ as $t\to 0^+$.}
First, since  $(X^{x,\alpha}(t))_{t>0}$ is bounded and
$X^{x,\alpha}(t)$ belongs to the closed set $\Gamma$, for all $t>0$, the set of cluster points
of $(X^{x,\alpha}(t))_{t>0}$ as $t\to 0^+$ is non empty and is contained in $\Gamma$.
To prove the result, it is enough to prove that there is only one cluster point.
If $\ell =0$ in~\eqref{limit-norme}, then the result holds true with $\hat{x}=O$.
We now deal with the case $\ell >0$.

We argue by contradiction, assuming that there exist two sequences $t_n, t_n' \to 0$
such that $X^{x,\alpha}(t_n)\to \hat{x}^{(1)}$, $X^{x,\alpha}(t_n')\to \hat{x}^{(2)}$
with $\hat{x}^{(1)}\not=  \hat{x}^{(2)}$. Without loss of generality, we may assume that
$0 < t_{n+1}' <  t_{n+1} < t_n' < t_n$ for all $n\geq 1$.
From~\eqref{limit-norme}, we infer that $\hat{x}^{(1)}, \hat{x}^{(2)}\in \Gamma\cap \mathcal{C}(O,\ell)$,
where $\mathcal{C}(O,\ell)$ is the circle of center $O$ and radius~$\ell >0$.
This means that  $\hat{x}^{(1)}$ and $\hat{x}^{(2)}$ are located in different edges
and so is for $X^{x,\alpha}(t_n)$ and~$X^{x,\alpha}(t_n')$ for $n$ large enough.
It follows, due to the continuity of  the map~$t\in (0,\infty)\mapsto X^{x,\alpha}(t)\in \Gamma$, that it is possible to
find $t_n' < t_n'' < t_n$ such that  $X^{x,\alpha}(t_n'')=O$ for all $n$.
Therefore, $|X^{x,\alpha}(t_n'')|\to 0$, which is in contradiction with~\eqref{limit-norme} when $\ell >0$.
The result is proved.

{\it Step 3. $|\hat{x}|=|\overline{x}|$, where $\overline{x}$ is defined by~\eqref{proj-gamma}.}
Let $T>0$. From Lemma~\ref{lem-exis}, for all $t\in [0,T]$, $X^{x,\alpha, \varepsilon}(t), Z^x(t)\in \overline{B}(0,R)$,
where $R=|x|+\sqrt{2}|f|_\infty T$, and recall that $Z^x$ is the solution of~\eqref{edo-grad1}.
We define the Lipschitz constant $L=L(T,|f|_\infty,|x|):= {\rm Lip}_{\overline{B}(0,R)}(\nabla d)$
of $\nabla d$ in $\overline{B}(0,R)$.

Set $Y^{x,\alpha, \varepsilon}(t):=X^{x,\alpha, \varepsilon}(\e t)$ for $t\in [0,\e^{-1}T]$.
From~\eqref{traj-eps}, we obtain that  $Y^{x,\alpha, \varepsilon}(t)$ is the unique solution of
\begin{eqnarray*}
  && \dot{Y}^{x,\alpha, \varepsilon}(t)=\e f(Y^{x,\alpha, \varepsilon}(t), \alpha(\e t))-\nabla d(Y^{x,\alpha, \varepsilon}(t)),
  \quad Y^{x,\alpha, \varepsilon}(0)=x.
\end{eqnarray*}
For a.e. $0<t\leq \e^{-1}T$, we have
\begin{eqnarray*}
  |\dot{Y}^{x,\alpha, \varepsilon}-\dot{Z}|
  &\leq & \e |f|_\infty + |\nabla d(Y^{x,\alpha, \varepsilon}) - \nabla d(Z^x)|\\
  &\leq & \e |f|_\infty + L |Y^{x,\alpha, \varepsilon} - Z^x|.
\end{eqnarray*}
Gr\"onwall inequality yields
\begin{eqnarray}\label{gron123}
  && |Y^{x,\alpha, \varepsilon}(t) - Z^x(t)|= |X^{x,\alpha, \varepsilon}(\e t) - Z^x(t)|\leq \frac{\e |f|_\infty}{L} e^{Lt}
  \quad \text{for all $0\leq t\leq  \e^{-1}T$.}
\end{eqnarray}
We have
\begin{eqnarray}\label{teps23}
  && t_\e:= -\frac{1}{L+1} \ln\e \mathop{\to}_{\e\to 0} \infty, \qquad \e t_\e \mathop{\to}_{\e\to 0} 0.
\end{eqnarray}
It follows from Theorem~\ref{thm-loja} and Proposition~\ref{traj-d1} that
$Z^x(t_\e)\to \overline{x}$ as $\e\to 0$, and, from~\eqref{gron123}, that
\begin{eqnarray*}
  && |X^{x,\alpha, \varepsilon}(\e t_\e) - Z^x(t_\e)|\leq \frac{\e |f|_\infty}{L}  e^{Lt_\e}= \frac{|f|_\infty}{L} \e^{\frac{1}{L+1}}\to 0
\end{eqnarray*}
Hence
\begin{eqnarray}\label{limteps}
X^{x,\alpha, \varepsilon}(\e t_\e) \mathop{\to}_{\e\to 0}  \overline{x}.
\end{eqnarray}
Writing~\eqref{ineg592} with $0<t_1=\e t_\e < t_2=t\leq T$, we obtain
\begin{eqnarray*}
  \left| |X^{x,\alpha, \varepsilon}(t)|^2- |X^{x,\alpha, \varepsilon}(\e t_\e)|^2 \right|
 &\leq& C(t-\e t_\e)  + 2(d^{\frac{1}{2}}(X^{x,\alpha, \varepsilon}(\e t_\e))-d^{\frac{1}{2}}(X^{x,\alpha, \varepsilon}(t))).
\end{eqnarray*}
Sending $\e\to 0$, we get $\left| |X^{x,\alpha}(t)|^2- |\overline{x}|^2 \right|\leq Ct$,
from which we conclude $|X^{x,\alpha}(t)|\to |\overline{x}|$ as $t\to 0^+$. Thus $|\hat{x}|=|\overline{x}|$.

{\it Step 4. $\hat{x}=\overline{x}$.}
The key idea is the same as in Step 2.
First, if $\overline{x}=0$, then, the result follows from Step 3.

We then consider the case
$|\overline{x}|=|\hat{x}|=\ell >0$,
and argue by contradiction assuming $\hat{x}\not=\overline{x}$.
Since  $\hat{x}, \overline{x} \in \Gamma\cap \mathcal{C}(O,\ell)$,
they are in different edges, say $\hat{x}= \ell e_N$
and $\overline{x}= \ell e_E$. Therefore, we can find $\eta >0$
small enough such that $B(\overline{x}, \eta)\subset \{ x_1 > x_2\}$ and $B(\hat{x}, \eta)\subset \{ x_1 < x_2\}$
are disjoint neighborhoods of $\overline{x}$ and $\hat{x}$, respectively.

Since $X^{x,\alpha}(t)\to \hat{x}$ as $t\to 0^+$, there exists $t_\eta >0$ such that
for every $0<t<t_\eta$, $X^{x,\alpha}(t)\in B(\hat{x}, \eta/2)$. We fix such a $t$.
From the pointwise convergence $X^{x,\alpha, \varepsilon}(t)\to X^{x,\alpha}(t)$, we deduce
the existence of $\e_t$ such that, for all $0 <\e <\e_t$,
$X^{x,\alpha, \varepsilon}(t)\in  B(\hat{x}, \eta)$.

From~\eqref{teps23} and~\eqref{limteps}, we infer the existence of $\e_{t_\eta} \leq\e_t$ such that,
for all $\e < \e_{t_\eta}$, $0 < \e t_\e < t$ and $X^{x,\alpha, \varepsilon}(\e t_\e)\in B(\overline{x}, \eta)$.
By continuity of the map $\tau\in [\e t_\e, t]\mapsto X^{x,\alpha, \varepsilon}(\tau)$,
there exists $0< \e t_\e < \tau_\e < t$ such that
$X^{x,\alpha, \varepsilon}(\tau_\e)\in \{x_1=x_2\}$, where
$\{x_1=x_2\}$ is the line separating the half-spaces $\{ x_1 > x_2\}$ and $\{ x_1 < x_2\}$. 
Writing~\eqref{ineq123} for $t_1=\e t_\e \leq t_2=\tau_\e < t$, we have
\begin{eqnarray*}
  0\leq d(X^{x,\alpha, \varepsilon}(\tau_\e))\leq d(X^{x,\alpha, \varepsilon}(\e t_\e)) +\frac{|f|_\infty^2}{2}\varepsilon (\tau_\e -\e t_\e)
   \mathop{\to}_{\e\to 0} 0.
\end{eqnarray*}
Since $\{ x_1 = x_2\}\cap \{d=0\} =\{O\}$, we obtain that $X^{x,\alpha, \varepsilon}(\tau_\e)\to O$
as $\e\to 0$.
Now, writing~\eqref{ineg592} with $0<t_1=\tau_\e < t_2=t\leq T$, we obtain
\begin{eqnarray*}
  \left| |X^{x,\alpha, \varepsilon}(t)|^2- |X^{x,\alpha, \varepsilon}(\tau_\e)|^2 \right|
 &\leq& C(t-\tau_\e)  + 2(d^{\frac{1}{2}}(X^{x,\alpha, \varepsilon}(\tau_\e))-d^{\frac{1}{2}}(X^{x,\alpha, \varepsilon}(t))),
\end{eqnarray*}
which yields, when $\e\to 0$, $|X^{x,\alpha}(t)|^2\leq Ct$.
All these computations being true for every $0<t<t_\eta$, we conclude that $X^{x,\alpha}(t)\to O$
as $t\to 0^+$, which is in contradiction with $\hat{x}\not= 0$.
This completes the proof of Lemma \ref{saut-sur-reseau}.
\end{proof}

We end this section with the proof of Theorem~\ref{cv-traj}.

\begin{proof}[Proof of Theorem~\ref{cv-traj}]
Parts (1),(2),(3) are straightforward consequence of Lemmas~\ref{lem-exis},~\ref{compactnesslemma}
and~\eqref{saut-sur-reseau}.
The proof of (4) is postponed to the end of Section~\ref{sec:proof-dyn-interieure}.
\end{proof}

\section{Qualitative properties of the trajectories $X^{x,\alpha}$}
\label{sec:sko}

\subsection{Dynamic of trajectories inside the branches}
\label{sec:proof-dyn-interieure}

We start with the proof of Theorem~\ref{edo-interieur}, which gives a characterization
of the trajectories when they are inside the edges.

\begin{proof}[Proof of Theorem~\ref{edo-interieur}] Using the notations~\eqref{junctionandedges},
let $n$ be such that for all $t\in (a_{n}, b_{n})$, $X^{x,\alpha}(t)\in (0,\infty)e_{i({n})}$
and consider $X^{x,\alpha, \varepsilon}\to X^{x,\alpha}$ as $\varepsilon\to 0$.
From~\eqref{traj-eps}-\eqref{def-k}, for all $a_n < a \leq t < b_n$, we have
\begin{eqnarray*}
  && X^{x,\alpha, \varepsilon}(t) =  X^{x,\alpha, \varepsilon}(a)+ \int_a^t f(X^{x,\alpha, \varepsilon}(s),\alpha(s))ds
  -(k^{x,\alpha, \varepsilon}(t) - k^{x,\alpha, \varepsilon}(a)).
\end{eqnarray*}
Writing $e:=e_{i(n)}$ for simplicity, it follows that
\begin{eqnarray}\label{scal-e}
  && \langle X^{x,\alpha, \varepsilon}(t), e\rangle
  =  \langle X^{x,\alpha, \varepsilon}(a), e\rangle
  + \int_a^t  \langle f(X^{x,\alpha, \varepsilon}(s),\alpha(s)), e\rangle ds\\ \nonumber
  && \hspace*{6cm}- \langle k^{x,\alpha, \varepsilon}(t) - k^{x,\alpha, \varepsilon}(a), e\rangle.
\end{eqnarray}
Using the uniform convergence $X^{x,\alpha, \varepsilon}\to X^{x,\alpha}$ in $[a,b_n]$ as $\varepsilon\to 0$,
we easily obtain the convergence of the first three terms above. It remains to deal
with
\begin{eqnarray}\label{terme-en-k}
  &&
  \langle k^{x,\alpha, \varepsilon}(t) - k^{x,\alpha, \varepsilon}(a), e\rangle
  = \int_a^t   \langle \dot{k}^{x,\alpha, \varepsilon}(s), e\rangle ds
  =  \int_a^t \frac{1}{\varepsilon}  \langle \nabla d(X^{x,\alpha, \varepsilon}(s)), e\rangle ds.
\end{eqnarray}
For this, we use the explicit form of $d$ (see~\eqref{choix-d}), which leads to 
\[
\begin{array}{lll}
  \langle \nabla d(x), e\rangle \!= \!2\sqrt{d(x)}  \langle x, e^\perp\rangle {\mbox{ and }  \langle \nabla d(x), e^\perp\rangle  \!= \! 2\sqrt{d(x)}  \langle x, e\rangle, \mbox{ for }  i(n)\in\{E, W\},} 
\\*[.5em]
\langle \nabla d(x), e\rangle  \!= \! -2\sqrt{d(x)}  \langle x, e^\perp\rangle {\mbox{ and }  \langle \nabla d(x), e^\perp\rangle  \!= \! -2\sqrt{d(x)}  \langle x, e\rangle, \mbox{ for } i(n)\in\{N, S\},}
\end{array}
\]
and therefore,
\begin{eqnarray*}
  \langle \dot{k}^{x,\alpha, \varepsilon}(s), e\rangle
  = \frac{ \langle X^{x,\alpha, \varepsilon}(s), e^\perp\rangle}{ \langle X^{x,\alpha, \varepsilon}(s), e\rangle} \langle \dot{k}^{x,\alpha, \varepsilon}(s), e^\perp\rangle.
\end{eqnarray*}
Note that, since $X^{x,\alpha, \varepsilon}\to X^{x,\alpha}$ as $\varepsilon\to 0$, and  $X^{x,\alpha}(s)\neq O$ for all $s\in (a_{n}, b_{n})$, we have $\langle X^{x,\alpha, \varepsilon}(s), e_{i(n)}\rangle\neq0$, for $\varepsilon>0$ small enough.
Recalling that $k^{x,\alpha, \varepsilon}$ is bounded in $H^1_{\rm loc}(0,\infty)$, from Cauchy-Schwarz inequality, we get
\begin{eqnarray*}
  &&
  \int_a^t   \langle \dot{k}^{x,\alpha, \varepsilon}(s), e\rangle ds
  \leq \left( \int_a^t  \frac{ \langle X^{x,\alpha, \varepsilon}(s), e^\perp\rangle^2}{ \langle X^{x,\alpha, \varepsilon}(s), e\rangle^2}  ds \right)^{1/2}
   \|k^{x,\alpha, \varepsilon}\|_{H^1 [a,b_n]}.
\end{eqnarray*}
But, since $X^{x,\alpha}\in (0,\infty)e$ in $[a,t]$, we have
$\langle X^{x,\alpha, \varepsilon}(s), e^\perp\rangle^2\to 0$ and
$\langle X^{x,\alpha, \varepsilon}(s), e\rangle^2\to  \langle X^{x,\alpha}(s), e\rangle^2\not= 0$,
uniformly on $[a,t]$. It follows that the sequence in~\eqref{terme-en-k} converges to~$0$, as $\varepsilon\to 0$.

Thus, passing to the limit  $\varepsilon\to 0$ in~\eqref{scal-e}, we get
\begin{eqnarray*}
  && \langle X^{x,\alpha}(t), e\rangle
  =  \langle X^{x,\alpha}(a), e\rangle
  + \int_a^t  \langle f(X^{x,\alpha}(s),\alpha(s)), e\rangle ds.
\end{eqnarray*}
Notice that, as a  by-product, we obtain
\begin{eqnarray*}
\langle k^{x,\alpha}(t), e\rangle= \langle k^{x,\alpha}(a), e\rangle \qquad
\text{for all $t\in [a,b_n)$}.
\end{eqnarray*}

We now turn to the limit of the $e^\perp$-component. We have,
\begin{eqnarray}\label{scal-eperp}
  && \langle X^{x,\alpha, \varepsilon}(t), e^\perp\rangle
  =  \langle X^{x,\alpha, \varepsilon}(a), e^\perp\rangle
  + \int_{a}^t  \langle f(X^{x,\alpha, \varepsilon}(s),\alpha(s)), e^\perp\rangle ds\\ \nonumber
  && \hspace*{7cm}
  - \langle k^{x,\alpha, \varepsilon}(t) - k^{x,\alpha, \varepsilon}(a), e^\perp\rangle.
\end{eqnarray}
By the uniform convergence
$(X^{x,\alpha, \varepsilon},  k^{x,\alpha, \varepsilon})\to (X^{x,\alpha},  k^{x,\alpha})$
on $[a,b_n]$ with $ \langle X^{x,\alpha, \varepsilon},  e^\perp\rangle  \to 0$, we obtain
\begin{eqnarray*}
&&  \langle k^{x,\alpha}(t),  e^\perp\rangle =
  \langle  k^{x,\alpha}(a), e^\perp\rangle
  + \int_{a}^t  \langle f(X^{x,\alpha}(s),\alpha(s)), e^\perp\rangle ds.
\end{eqnarray*}

Collecting the previous results we arrive at
\begin{eqnarray*}
&& X^{x,\alpha}(t) =  X^{x,\alpha}(a) + \int_{a}^t  \langle f(X^{x,\alpha}(s),\alpha(s)), e\rangle e\, ds,\\
&&  k^{x,\alpha}(t)=  k^{x,\alpha}(a) + \int_{a}^t  \langle f(X^{x,\alpha}(s),\alpha(s)), e^\perp\rangle e^\perp ds,
\end{eqnarray*}
for all $a_n<a\leq t< b_n$.
Recalling that
$X^{x,\alpha}(0^+)=\overline{x}$ and $k^{x,\alpha}(0^+)=x-\overline{x}$, this proves the result in the case $a_n=0$. When $a_n\not= 0$, the result follows from the continuity of~$(X^{x,\alpha},  k^{x,\alpha})$ on~$(0,\infty)$.
\end{proof}  

\begin{proof}[Proof of Theorem~\ref{cv-traj} (4)]
From Theorem~\ref{cv-traj} (2), we know that $X^{x,\alpha}, k^{x,\alpha}\in H_{\rm loc}^1(0,\infty)$,
thus they are differentiable almost everywhere on~$(0,\infty)$.

From~\eqref{ppte-skorokhod}, it is enough to prove~\eqref{estim-derivees}
for $X^{x,\alpha}$ and then, deduce the result for~$k^{x,\alpha}$.
Below we use the time partition~\eqref{junctionandedges} associated
with  $X^{x,\alpha}$.

From Theorem~\ref{edo-interieur}, for a.e. $t\in (a_n, b_n)$, we have
$\dot{X}^{x,\alpha}(t) = \langle f({X}^{x,\alpha}(t), \alpha(t)), e_{i(n)}\rangle e_{i(n)}$,
which yields~\eqref{estim-derivees}.

It remains to deal with the case $t\in \mathcal{Z}(X^{x,\alpha})$.
We argue by contradiction assuming  $X^{x,\alpha}$ is differentiable at $t$
with $|\dot{X}^{x,\alpha}(t)|\geq |f|_\infty+2\eta$ for some $\eta >0$. On the one hand,
from the Taylor expansion $X^{x,\alpha}(t+h)=X^{x,\alpha}(t)+ h\dot{X}^{x,\alpha}(t) +o(h)$,
we deduce the existence of $h_0>0$ such that,
for all $h\in (0,h_0)$, 
$|o(h)|\leq \eta h$  and
\begin{eqnarray}\label{minordotX}
&& |X^{x,\alpha}(t+h)|\geq (|f|_\infty+\eta)h.
\end{eqnarray}
This means that $(0,h_0)\subset (a_n,b_n)$ for some $n$ with $a_n=t$.
On the other hand, by Theorem~\ref{edo-interieur}, we have
\begin{eqnarray*}
  && |X^{x,\alpha}(t+h)| = \left|\int_t^{t+h}  \langle f({X}^{x,\alpha}(t), \alpha(t)), e_{i(n)}\rangle e_{i(n)}ds  \right|
  \leq |f|_\infty h,
\end{eqnarray*}
which is in contradiction with~\eqref{minordotX}.
Eventually, we conclude~\eqref{estim-derivees} for all differentiability points
$t\in\mathcal{Z}(X^{x,\alpha})$, \emph{i.e.}, a.e. in $\mathcal{Z}(X^{x,\alpha})$. This ends the proof of Theorem~\ref{cv-traj}~(4).
\end{proof}

\subsection{Behavior of trajectories with locally constant control near the origin}\label{effetzenon}

When a trajectory reaches $O$, it is delicate to determine what happens later
(how long does the trajectory remain at $O$? in which branch does it enters? etc.).
Before giving a complete description of the behavior for locally constant controls,
we first give a pathological example to illustrate the difficulty.
More precisely,
we construct a control~$\alpha\in \mathcal{A}_O$ such that one cannot
determine in which branch the trajectory enters.

\begin{lemma}[A trajectory visiting all branches instantaneously]
\label{lem:traj-visite-tout}
There exists a control $\alpha\in \mathcal{A}_O$ such that
$\omega(O,\alpha)=\{X^{O,\alpha}\}$  and, for all $\delta >0$ and $i\in \{E,N,W,S\}$,
there exists $t\in (0,\delta)$ such that $X^{O,\alpha}(t)\in (0,\infty) e_i$.
\end{lemma}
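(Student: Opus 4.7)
The plan is to exhibit an explicit piecewise-constant control whose trajectory both lies on $\Gamma$ for every $\varepsilon>0$ and sweeps all four branches in any neighbourhood of $0$. I work under assumption~\eqref{calc-var} (or, more generally, assume $\pm e_i\in f(O,A)$ for every $i\in\{E,N,W,S\}$), and pick $a_i^{\pm}\in A$ with $f(O,a_i^{\pm})=\pm e_i$.

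Let $t_n:=2^{-n}$ for $n\geq 0$, $J_n:=[t_{n+1},t_n]$, $T_n:=2^{-(n+1)}$, and let $(i_n)_{n\geq 0}\subset\{E,N,W,S\}$ cycle through the four directions. Define
\[
\alpha(t):=\begin{cases} a_{i_n}^+, & t\in[t_{n+1},(t_n+t_{n+1})/2),\\ a_{i_n}^-, & t\in[(t_n+t_{n+1})/2,t_n),\end{cases}\quad n\geq 0,\qquad \alpha\equiv 0 \text{ on } [1,\infty),
\]
and the piecewise-linear candidate $Y:[0,\infty)\to\R^2$ by $Y(0)=O$, $Y\equiv O$ on $[1,\infty)$, and, on $J_n$,
\[
Y(t):=\begin{cases} (t-t_{n+1})\,e_{i_n}, & t\in[t_{n+1},(t_n+t_{n+1})/2],\\ (t_n-t)\,e_{i_n}, & t\in[(t_n+t_{n+1})/2,t_n].\end{cases}
\]
A direct inspection shows that $Y$ is continuous on $[0,\infty)$, takes values in $\Gamma$, satisfies $Y(t_n)=O$ for every $n$ and $Y(t)\to O$ as $t\to 0^+$, and that $\dot Y(t)=f(Y(t),\alpha(t))$ for almost every $t$.

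The decisive observation is that, by~\eqref{choix-d}, $\nabla d\equiv 0$ on $\Gamma$; hence the perturbation $-\varepsilon^{-1}\nabla d(Y(t))$ is identically zero along $Y$. Consequently $Y$ is an actual solution of the perturbed ODE~\eqref{traj-eps} with initial datum $O$ \emph{for every} $\varepsilon>0$. By the Cauchy--Lipschitz uniqueness part of Lemma~\ref{lem-exis}, $X^{O,\alpha,\varepsilon}=Y$ for every $\varepsilon>0$, and from~\eqref{def-k}, $k^{O,\alpha,\varepsilon}\equiv 0$; the family $(X^{O,\alpha,\varepsilon},k^{O,\alpha,\varepsilon})$ is therefore constant in $\varepsilon$, so $\omega(O,\alpha)=\{Y\}$. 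The same argument applied to the unperturbed ODE~\eqref{traj} yields $\alpha\in\mathcal{A}_O$.

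The Zeno property follows at once from the construction. Given $\delta>0$, pick $N$ with $t_N<\delta$; for each $i\in\{E,N,W,S\}$, choose $n\geq N$ with $i_n=i$ and set $\tau_n:=(t_n+t_{n+1})/2\in(0,t_N)\subset(0,\delta)$. Then $Y(\tau_n)=(T_n/2)\,e_i\in(0,\infty)e_i$, as required. There is no genuine obstacle in the argument: confining the candidate trajectory to the coordinate axes \emph{trivialises} the singular perturbation, so the whole question collapses to Cauchy--Lipschitz uniqueness for a linear ODE on $\Gamma$ and no quantitative estimate on~\eqref{traj-eps} is needed.
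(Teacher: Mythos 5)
Your proof is correct and is essentially the same as the paper's: you encode the Zeno control $\alpha$ directly by its values on the dyadic intervals $[2^{-n-1},2^{-n}]$ rather than as a sum $\sum_k\beta_k$ of controls with disjoint supports, but the decisive observation — that the back-and-forth trajectory never leaves $\Gamma$, so $\nabla d$ vanishes along it, the perturbed ODE coincides with the unperturbed one, and hence $X^{O,\alpha,\varepsilon}$ is independent of $\varepsilon$ by uniqueness, forcing $\omega(O,\alpha)$ to be a singleton — is exactly the paper's. One small caveat: the parenthetical ``or, more generally, assume $\pm e_i\in f(O,A)$'' is not quite sufficient as stated, because once $Y(t)$ leaves the origin you need $f(Y(t),a_{i}^{\pm})=\pm e_i$ on the whole ray, not just at $O$; under~\eqref{calc-var} this is automatic, and that is the setting in which the paper carries out the construction as well.
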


\begin{proof}
Let $(i(n))_{n\in\N}$ be any sequence such that $i(n)\in \{E,N,W,S\}$
and define the measurable function $\beta_n : [0,1]\to \overline{B}(O,1)$,
\begin{eqnarray*}
  && \beta_n(t)=
  \left\{
  \begin{array}{cl}
    0 & \text{if $t\in [0,\frac{1}{2^{n+1}}]$,}\\[1mm]
    e_{i(n)} &   \text{if $t\in (\frac{1}{2^{n+1}}, \frac{1}{2^{n+1}}+\frac{1}{2^{n+2}} )$,}\\[1mm]
    -e_{i(n)} &   \text{if $t\in [\frac{1}{2^{n+1}}+\frac{1}{2^{n+2}}, \frac{1}{2^{n}})$,}\\[1mm]
    0 &   \text{if $t\in [\frac{1}{2^{n}}, 1]$.}
  \end{array}
  \right.
\end{eqnarray*}
Setting $f(x,\alpha):=\alpha$, we notice that $f$ satisfies~\eqref{hyp-f-ell}.
Moreover, the solution $X^{O,\beta_n,\varepsilon}$ of~\eqref{traj-eps} is exactly
the solution of the ODE $\dot{X}(t)=\beta_n(t)$, $X(0)=O$, that is,
\begin{eqnarray*}
  && X^{O,\beta_n,\varepsilon}(t)=
  \left\{
  \begin{array}{cl}
    O & \text{if $t\in [0,\frac{1}{2^{n+1}}]$,}\\[1mm]
    (t-\frac{1}{2^{n+1}}) e_{i(n)} &   \text{if $t\in (\frac{1}{2^{n+1}}, \frac{1}{2^{n+1}}+\frac{1}{2^{n+2}} )$,}\\[1mm]
    (\frac{1}{2^{n}}-t) e_{i(n)} &   \text{if $t\in [\frac{1}{2^{n+1}}+\frac{1}{2^{n+2}}, \frac{1}{2^{n}})$,}\\[1mm]
    O &   \text{if $t\in [\frac{1}{2^{n}}, 1]$.}
  \end{array}
  \right.
\end{eqnarray*}
Notice that $\beta_n$ is designed so that $X^{O,\beta_n,\varepsilon}(t)\in \Gamma$ for all $t\in [0,1]$.
In particular, the penalization in~\eqref{traj-eps} is always 0 and $X^{O,\beta_n,\varepsilon}\equiv X^{O,\beta_n}$.

Next, we set $\alpha_n:=\sum_{k=0}^n \beta_k$ and we consider the solution $X^{O,\alpha_n,\varepsilon}$
of~\eqref{traj-eps}. 
Since~$\beta_k\beta_m=0$ for $k\not= m$ and $X^{O,\beta_n,\varepsilon}(2^{-n-1})=X^{O,\beta_n,\varepsilon}(2^{-n})=O$, we have
$$
X^{O,\alpha_n,\varepsilon} = \sum_{k=0}^n X^{O,\beta_k,\varepsilon}=  \sum_{k=0}^n X^{O,\beta_k} = X^{O,\alpha_n}.
$$

Sending $n$ to $\infty$, we obtain a measurable function $\alpha:= \sum_{k=0}^\infty \beta_k\in \mathcal{A}_O$
such that the solution  $X^{O,\alpha,\varepsilon}$
of~\eqref{traj-eps} is exactly
$$
X^{O,\alpha} = \sum_{k=0}^\infty X^{O,\beta_k},
$$
which does not depend on $\varepsilon$. Thus, $\omega(O,\alpha)=\{X^{O,\alpha}\}$.
Moreover, by construction, $X^{O,\alpha}(2^{-n})=O$ for all $n$ and
$X^{O,\alpha}(t)\in (0,\infty) e_{i(n)}$ for $t\in (2^{-n-1}, 2^{-n})$.
Choosing, for instance, the periodic sequence $E,N,W,S,E,N,W,S,\cdots$, we obtain
the result of Lemma \ref{lem:traj-visite-tout}.
\end{proof}  

The aim of the following results is to establish the behavior of the trajectories
passing through the origin $O$ with constant controls.

We introduce the unit vector $e_\theta= (\cos\theta, \sin\theta)$
(note that $e_E= e_0$, $e_N= e_{\pi/2}$, etc.) and consider
a trajectory $t\mapsto X^{x,\alpha}(t)$ such that
\begin{eqnarray}\label{traj_hitO}
X^{x,\alpha}(t_0)=O,
\end{eqnarray}
with $\alpha(t)\equiv e_\theta$ in some interval $[t_0-\tau, t_0+\tau]$.

We want to describe all possible situations.
We distinguish the case when the trajectory starts at the origin, $x=O$,
from the case when it starts outside the origin. In this latter case, up to scalings
(Lemma~\ref{lem-scaling}) and
permutations, we may assume~$x=(0,1)=e_N$.
Next, since  we are interested in a single passage at $O$, we can assume
without loss of generality that $\alpha(t)=e_\theta$ for all $t\geq 0$.

\begin{proposition}[Trajectories starting from $O$] \label{behav-O}
Let $\alpha(t)\equiv e_\theta$ for $\theta\in [0,2\pi]$ and $x=O$.
Then, $\omega (O,e_\theta)=\{ X^{O,e_\theta}\}$ and: 
\begin{enumerate}
\item (Non bisector case,
  $\theta\not\in \{\frac{\pi}{4}, \frac{3\pi}{4} ,  \frac{5\pi}{4} ,  \frac{7\pi}{4}\}$)
If  there exists $i\in \{E,N,W,S\}$ such that
  $\langle e_\theta, e_i\rangle > \max_{j\not= i}\langle e_\theta, e_j\rangle$,
then, the trajectory enters the branch $i$; that is,
$X^{O,e_\theta}(t)= \langle e_\theta, e_i\rangle t e_i$ for all $t\geq 0$.

\item (Bisector case, $\theta\in \{\frac{\pi}{4}, \frac{3\pi}{4} ,  \frac{5\pi}{4} ,  \frac{7\pi}{4}\}$)
If  there exist $i\not= j$ such that
$\langle e_\theta, e_i\rangle =\langle e_\theta, e_j\rangle$, then, the trajectory remains at $O$, \emph{i.e.},
$X^{O,e_\theta}(t)= O$  for all $t\geq 0$.

\end{enumerate}
\end{proposition}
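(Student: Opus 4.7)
The key idea is a self-similar rescaling afforded by the cubic homogeneity $\nabla d(\lambda x) = \lambda^3 \nabla d(x)$ of the choice~\eqref{choix-d}. Setting $Y^\varepsilon(s) := \varepsilon^{-1/3} X^{O, e_\theta, \varepsilon}(\varepsilon^{1/3} s)$, a direct computation from~\eqref{traj-eps} shows that $Y^\varepsilon$ satisfies the $\varepsilon$-independent ODE $\dot Y = e_\theta - \nabla d(Y)$ with $Y(0) = O$. By Cauchy-Lipschitz, $Y^\varepsilon$ coincides for every $\varepsilon > 0$ with the unique solution $Y$ of this autonomous system, giving the explicit formula $X^{O, e_\theta, \varepsilon}(t) = \varepsilon^{1/3}\, Y(t/\varepsilon^{1/3})$ for all $t \geq 0$ and $\varepsilon > 0$. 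Since the right-hand side is deterministic (no subsequence is involved), the conclusion $\omega(O, e_\theta) = \{X^{O, e_\theta}\}$ follows as soon as the pointwise limit is identified, and the whole problem reduces to the long-time behavior of the single autonomous ODE for $Y$.

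For the bisector case $\theta = \pi/4$, the ODE is invariant under the swap $(Y_1, Y_2) \leftrightarrow (Y_2, Y_1)$; uniqueness gives $Y_1 \equiv Y_2 =: y$, and $y$ solves the scalar equation $\dot y = 1/\sqrt 2 - 2 y^3$, which has a unique stable equilibrium $y^\star = 1/\sqrt 2$. Hence $Y$ stays bounded on $[0,\infty)$, so $X^{O, e_\theta, \varepsilon}(t) = \varepsilon^{1/3} Y(t/\varepsilon^{1/3}) \to O$ for every $t \geq 0$, proving (2) for $\theta = \pi/4$. The three other bisectors follow from the sign-flip symmetries $(x_1, x_2) \mapsto (\pm x_1, \pm x_2)$, which simultaneously preserve $\Gamma$ and $d$.

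For the non-bisector case, up to these same symmetries I may assume $i = E$, i.e., $\cos\theta > |\sin\theta|$ (so in particular $\cos\theta > 0$). The target asymptotics are $Y_1(s)/s \to \cos\theta$ and $Y_2(s) \to 0$, from which $\varepsilon^{1/3} Y(t/\varepsilon^{1/3}) = t \cdot Y(t/\varepsilon^{1/3})/(t/\varepsilon^{1/3}) \to t\cos\theta\, e_E$ locally uniformly on $(0,\infty)$. In the system $\dot Y_1 = \cos\theta - 2 Y_1 Y_2^2$, $\dot Y_2 = \sin\theta - 2 Y_1^2 Y_2$, the condition $\dot Y_1|_{Y_1 = 0} = \cos\theta > 0$ forces $Y_1 \geq 0$, and once $Y_1$ is large the stiff damping $-2 Y_1^2 Y_2$ drives $Y_2$ towards the quasi-equilibrium $\sin\theta/(2 Y_1^2)$, after which $\dot Y_1 \to \cos\theta$.

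The main obstacle is turning this heuristic into a rigorous argument, since the smallness of $Y_2$ and the linear growth of $Y_1$ are coupled in a circular way. My plan is a bootstrap: first, Gr\"onwall on the ODE gives a crude \emph{a priori} bound $|Y(s)| \leq C(1+s)$; second, a Lyapunov estimate on $d(Y) = Y_1^2 Y_2^2$ using the identity $\dot d(Y) = \langle \nabla d(Y), e_\theta\rangle - |\nabla d(Y)|^2$ and the structural relation $|\nabla d(Y)|^2 = 4 d(Y) |Y|^2$ controls the product $Y_1 Y_2$; third, plugging this back into the equation for $Y_1$ yields a linear lower bound $Y_1(s) \geq c s$ for $s$ large; and finally, reinjecting into the $Y_2$-equation produces the quantitative decay $Y_2(s) = O(1/s^2)$ and the limit $Y_1(s)/s \to \cos\theta$. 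The individual steps are elementary but technical, and the detailed computations are naturally deferred to Appendix~\ref{proof-beh-thr-origin}.
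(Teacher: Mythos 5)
Your rescaling observation is correct and is essentially Lemma~\ref{lem-scaling} of the paper specialized to $x=O$ and $\rho=\e^{-1/3}$: one gets $X^{O,e_\theta,\e}(t)=\e^{1/3}\,Y(t/\e^{1/3})$ where $Y=X^{O,e_\theta,1}$ solves the $\e$-free system $\dot Y = e_\theta-\nabla d(Y)$, $Y(0)=O$. This immediately yields $\omega(O,e_\theta)=\{X^{O,e_\theta}\}$ once the limit exists, and your treatment of the bisector case is complete and equivalent to the paper's (the paper imposes $X_1^\e=X_2^\e$ via uniqueness for a linear ODE and reads off the equilibrium $(\cos\theta/2)^{1/3}\e^{1/3}\to0$; your version is the same computation in the rescaled variable).

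The non-bisector case, however, is not proved: you identify the target asymptotics $Y_1(s)/s\to\cos\theta$, $Y_2(s)\to0$ and lay out a four-step bootstrap, but you explicitly defer all of the actual work, and it is precisely the circular coupling you flag (linear growth of $Y_1$ versus decay of $Y_2$) that carries the difficulty. Note in particular that the crude bounds $|Y(s)|\le s$ and $d(Y(s))\le s/4$ give only $Y_1Y_2\lesssim\sqrt s$, which is not enough by itself to close $\dot Y_1=\cos\theta-2Y_1Y_2^2\to\cos\theta$; a genuine continuation argument is required. The paper sidesteps sharp asymptotics entirely: it applies the monotonicity Lemma~\ref{qualit-lin-ode} to the component-wise system~\eqref{edo-c}, showing $X_1^\e,X_2^\e\ge0$ and, via the linear ODE satisfied by $z=X_1^\e-X_2^\e$ (whose coefficient $2\e^{-1}X_1^\e X_2^\e$ is nonnegative), that $X_1^\e-X_2^\e\ge(\cos\theta-\sin\theta)t$. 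Passing to the subsequential limit from Theorem~\ref{cv-traj} and using $X\in\Gamma$ forces $X_2\equiv0$ and $X_1>0$, i.e.\ the limit is pinned to branch $E$, after which Theorem~\ref{edo-interieur} delivers the exact formula $\langle e_\theta,e_E\rangle\,t\,e_E$. This route is genuinely different from yours: it leverages the structural theorems already established rather than reproving long-time ODE asymptotics from scratch, and you should either complete your bootstrap in full or adopt this shorter comparison argument.
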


\begin{proposition}[Trajectories starting inside a branch] \label{behav-1}
Let $\alpha(t)\equiv e_\theta$ for $\theta\in [0,2\pi]$ and $x=e_N$.
Then, $\omega (e_N , e_\theta)=\{ X^{e_N ,e_\theta}\}$ and: 
\begin{enumerate}
\item If $\theta\in [0,\pi]$, then, the trajectory never reaches $O$.

\item If $\theta\in (\pi,  \frac{5\pi}{4}]$, then, the trajectory
    enters the branch $W$ after passing through~$O$, \emph{i.e.},
  \begin{eqnarray}\label{traj-W}
  && X^{e_N ,e_\theta}(t)=
  \left\{
  \begin{array}{cl}
    \big(0,1 +(\sin\theta) t\big) & \text{if \ $t\in [0, (-\sin\theta)^{-1}]$,}\\*[.6em]
   \left(\frac{\cos\theta}{\sin\theta}+(\cos\theta)t, 0\right) &   \text{if \ $t\in [(-\sin\theta)^{-1},\infty)$.}
  \end{array}
  \right.
  \end{eqnarray}    

\item If $\theta\in (\frac{5\pi}{4}, \frac{7\pi}{4})$, then the trajectory
  continues into the branch $S$  after passing through $O$, \emph{i.e.},
 $X^{e_N ,e_\theta}(t)= \big(0,1 +(\sin\theta) t\big)$ for all $t\geq 0$.

\item If $\theta\in [\frac{7\pi}{4}, 2\pi)$, then the trajectory
    enters the branch $E$ after passing through $O$ (symmetric to the second case). 
  
\end{enumerate}
\end{proposition}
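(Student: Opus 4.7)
The plan is to apply Theorem~\ref{edo-interieur} on the successive edge intervals and glue the pieces across the origin. Since $e_N\in\Gamma$ we have $\overline{e_N}=e_N$, so Theorem~\ref{edo-interieur}(1) with $a_0=0$ and $i(0)=N$ yields on the first edge interval $[0,b_0]$ the formula
\begin{equation*}
X^{e_N,e_\theta}(t)=\bigl(0,\,1+\sin\theta\cdot t\bigr).
\end{equation*}
In Case~(1), $\theta\in[0,\pi]$, this formula keeps the trajectory in branch $N$ forever, so $b_0=\infty$ and the origin is never reached; the singleton character of $\omega(e_N,e_\theta)$ then follows from the explicit formula, since any subsequential limit must satisfy it. In Cases~(2)--(4), $\sin\theta<0$ forces $b_0=t^\star:=-1/\sin\theta$ and $X^{e_N,e_\theta}(t^\star)=O$ by continuity within branch $N$. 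It remains to identify the branch entered after $t^\star$.

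For non-bisector values of $\theta$, the index $i$ maximizing $\langle e_\theta,e_i\rangle$ is unique, and I would combine the time-homogeneity of~\eqref{traj-eps} with Proposition~\ref{behav-O}(1) as a ``restart'' at $O$ to conclude entry into branch $W$ for $\theta\in(\pi,5\pi/4)$ (the interior of Case~(2)), into branch $S$ for $\theta\in(5\pi/4,7\pi/4)$ (Case~(3)), and into branch $E$ for $\theta\in(7\pi/4,2\pi)$ (Case~(4)). Theorem~\ref{edo-interieur}(2), applied on the next edge interval $(a_1,b_1)=(t^\star,\infty)$, then produces the explicit formulas stated in~\eqref{traj-W} and its analogues, and a standard uniqueness-from-explicit-formula argument gives $\omega(e_N,e_\theta)=\{X^{e_N,e_\theta}\}$.

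The principal obstacle is the bisector $\theta=5\pi/4$ (symmetrically $\theta=7\pi/4$), where Proposition~\ref{behav-O}(2) predicts that a trajectory issued \emph{from} $O$ remains at $O$, in apparent contradiction with the claimed entry into $W$. The resolution is that the history of arrival along branch $N$ must break the $W$--$S$ symmetry already at the $\varepsilon$-level. A first ingredient is a sign invariance coming from
\begin{equation*}
\dot x_1^\varepsilon=\cos\theta-\frac{2}{\varepsilon}\,x_1^\varepsilon\,(x_2^\varepsilon)^2:
\end{equation*}
since $\cos\theta\le 0$ and $x_1^\varepsilon(0)=0$, the half-plane $\{x_1\le 0\}$ is positively invariant, so $X^{e_N,e_\theta}(t)\in\{x_1\le 0\}$ for every $t$. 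The second ingredient is a self-similar blow-up at $t^\star$: setting $Y^\varepsilon(\tau):=\varepsilon^{-1/3}X^\varepsilon(t^\star+\varepsilon^{1/3}\tau)$ and using the homogeneity $\nabla d(\varepsilon^{1/3}Y)=\varepsilon\,\nabla d(Y)$, equation~\eqref{traj-eps} transforms into the $\varepsilon$-free autonomous system $\dot Y=e_\theta-\nabla d(Y)$; matching with the quasi-static profile $x_1^\varepsilon\sim\varepsilon\cos\theta/(2(x_2^\varepsilon)^2)$ of the $N$-descent yields the incoming data $Y_1(\tau)\to 0^-$, $Y_2(\tau)\to+\infty$ as $\tau\to-\infty$.

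For $\theta=5\pi/4$ this autonomous system is invariant under the swap $(Y_1,Y_2)\leftrightarrow(Y_2,Y_1)$, so its unique saddle $Y^\star=(-1/\sqrt 2,-1/\sqrt 2)$ has stable manifold equal to the whole diagonal $\{Y_1=Y_2\}$ (which is invariant and on which $\dot s=-\sqrt 2/2-2s^3$ contracts to $Y^\star$). Forward-invariance of $\{Y_2>Y_1\}$ then traps the blown-up trajectory there, and within $\Gamma\cap\{Y_2>Y_1\}$ only branches $N$ and $W$ lie; ruling out branch $N$ (the trajectory has just left it and the $N$-dynamics under $e_{5\pi/4}$ points southward), $Y(\tau)$ must escape along branch $W$, giving $Y_1(\tau)\to-\infty$ and $Y_2(\tau)\to 0^-$. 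Returning to the original variables yields formula~\eqref{traj-W} and its counterpart for $k^{e_N,e_\theta}$. I expect this phase-plane analysis of the blown-up autonomous system, together with the matched-asymptotic control of the quasi-static descent, to be the technically heaviest step of the proof.
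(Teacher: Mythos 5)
Your decomposition into ``non-bisector'' and ``bisector'' cases, with a restart-at-$O$ argument for the former and a blow-up/phase-plane argument for the latter, is genuinely different from the paper's route (the paper never performs a self-similar blow-up; it works directly with the $\varepsilon$-level trajectories via explicit invariant regions, Lemmas~\ref{tepsilon}--\ref{yprimeneg12}). Unfortunately there is a real gap in the non-bisector part of your plan, and the bisector part is only sketched.

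The restart-at-$O$ step is not valid as stated. The $\varepsilon$-level semigroup property does hold, but it lets you restart from $X^{e_N,e_\theta,\varepsilon}(t_\varepsilon)$, and Lemmas~\ref{tepsilon}--\ref{etaepsilon} show that this point is $(\eta_\varepsilon,0)$ with $\eta_\varepsilon\sim-\gamma\varepsilon^{1/3}<0$; it is never exactly $O$. So you cannot invoke Proposition~\ref{behav-O}(1), which characterizes the limit of trajectories issued \emph{exactly} from $O$. One would instead need a \emph{stability} statement: that initial offsets of order $\varepsilon^{1/3}$ in the $\{x_1<0\}$ direction do not change the limiting branch. But Corollary~\ref{cor-instable}(2) is precisely a warning that such stability is delicate (and fails at the bisector). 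In fact the bias $\eta_\varepsilon<0$ is not a mere nuisance one can discard by ``being non-bisector'': in the paper it is the quantitative input that drives the invariant-set lemmas (the set $\mathcal{C}_1^{\varepsilon,\overline{\nu}}\cap\{x_1-\eta_\varepsilon\le x_2\}\cap\{x_2\le 0\}$ is built around $\eta_\varepsilon$), and the same type of estimate is needed for every $\theta\in(\pi,5\pi/4]$, not only at the endpoint $5\pi/4$.

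For the bisector, your phase-plane picture of the blown-up autonomous system $\dot Y=e_\theta-\nabla d(Y)$ is correct (the Jacobian at $Y^\star=(-2^{-1/2},-2^{-1/2})$ is $\left(\begin{smallmatrix}-1&-2\\-2&-1\end{smallmatrix}\right)$ with eigenvalues $1$ and $-3$, stable direction $(1,1)$, and $\tfrac{d}{d\tau}(Y_2-Y_1)=2Y_1Y_2(Y_2-Y_1)$ does make $\{Y_2>Y_1\}$ invariant), but you yourself flag the two decisive steps as unfinished: (i) the rigorous identification of the incoming data $Y_1\to0^-$, $Y_2\to+\infty$ as $\tau\to-\infty$, which requires exactly the boundary-layer estimates the paper proves in Lemmas~\ref{tepsilon}--\ref{etaepsilon}; and (ii) the inner-to-outer matching that transfers the $\tau\to+\infty$ asymptotics of $Y$ into the claim $X_1(t)\le(1-\overline{\nu}^2)(\cos\theta)(t-(-\sin\theta)^{-1})<0$ uniformly in $\varepsilon$ for all $t>(-\sin\theta)^{-1}$. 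Without (ii) you only control $X^\varepsilon$ in an $O(\varepsilon^{1/3})$ window around $t^\star$, which is not enough to identify the limit trajectory on $[(-\sin\theta)^{-1},\infty)$. The paper bypasses matched asymptotics entirely by exhibiting invariant sets on which $\dot X_1^\varepsilon$ has a strictly negative, $\varepsilon$-uniform upper bound; this is more elementary and, crucially, covers the full range $\theta\in(\pi,5\pi/4]$ in one stroke rather than treating bisector and non-bisector separately. Finally, note that you only sketched the entry into branch $W$; the paper proves Case~(3) ($\theta\in(5\pi/4,3\pi/2]$, entry into $S$) with a separate invariant-set Lemma~\ref{yprimeneg12} because the relevant invariant region changes shape; your proposal does not address this asymmetry.
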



The proofs of these propositions are postponed to the Appendix. The proof
of Proposition~\ref{behav-O} is easy, whereas that of Proposition~\ref{behav-1}
is involved and very technical. The difficult (and non-intuitive) behavior is the one of
part (2).

As a consequence, we obtain some pathological examples showing that
the limit trajectories $X^{x,\alpha}$ do not satisfy the semigroup property and
are not stable.

\begin{corollary}\label{cor-instable}
  Let $x\in\Gamma$ and $\alpha\in \mathcal{A}_x$. 

\begin{enumerate}
\item  (Failure of the semigroup property) We may have $X^{x,\alpha}\in \omega(x,\alpha)$, $X^{X^{x,\alpha}(t),\alpha (t+\cdot)}\in \omega(X^{x,\alpha}(t),\alpha (t+\cdot))$ for some $t>0$,
  and  $X^{x,\alpha}(t+s)\not= X^{X^{x,\alpha}(t),\alpha (t+\cdot)}(s)$.

\item (Instability) It can happen that $x_n\to x$ and $X^{x_n,\alpha}$ converges locally uniformly
  in $(0,\infty)$ to some $Y\not= X^{x,\alpha}$.

\end{enumerate}
  
\end{corollary}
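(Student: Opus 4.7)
The plan is to exploit the bisector case of Proposition~\ref{behav-O}~(2): a constant control $\alpha\equiv e_\theta$ with $\theta\in\{\pi/4,3\pi/4,5\pi/4,7\pi/4\}$ freezes the trajectory emanating from $O$, while nearby trajectories are nontrivial by Proposition~\ref{behav-1}~(2) or Theorem~\ref{edo-interieur}~(1). Both pathologies are built from this contrast.

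For (1), I would take $x=e_N$ and $\alpha(t)\equiv e_{5\pi/4}$. Since $5\pi/4\in(\pi,5\pi/4]$, Proposition~\ref{behav-1}~(2) gives $\omega(e_N,\alpha)=\{X^{e_N,\alpha}\}$ explicitly via~\eqref{traj-W}: the trajectory slides down branch $N$, reaches $O$ at $t=\sqrt{2}$, and then enters branch $W$. Choosing this $t$, the restarted problem starts at $X^{x,\alpha}(t)=O$ with the same bisector control, so by Proposition~\ref{behav-O}~(2) we have $\omega(O,\alpha(t+\cdot))=\{X^{O,\alpha(t+\cdot)}\}$ with $X^{O,\alpha(t+\cdot)}\equiv O$. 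For any $s>0$, one then has $X^{x,\alpha}(t+s)\in(0,\infty)e_W$ while $X^{X^{x,\alpha}(t),\alpha(t+\cdot)}(s)=O$, as required.

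For (2), the same mechanism gives: take $x=O$, $x_n=\tfrac{1}{n}e_E\to O$ and $\alpha(t)\equiv e_{\pi/4}$. Proposition~\ref{behav-O}~(2) yields $X^{O,\alpha}\equiv O$, while each $x_n$ sits in the interior of branch $E$ with outward projected velocity $\langle e_{\pi/4},e_E\rangle=\tfrac{\sqrt{2}}{2}>0$. Hence the trajectory never leaves branch $E$, and Theorem~\ref{edo-interieur}~(1) (applied with $\overline{x_n}=x_n$, $a_0=0$, $b_0=+\infty$) forces
\[
\omega(x_n,\alpha)=\{X^{x_n,\alpha}\},\qquad X^{x_n,\alpha}(t)=\Bigl(\tfrac{1}{n}+\tfrac{\sqrt{2}}{2}t\Bigr)e_E.
\]
Sending $n\to\infty$ gives locally uniform convergence on $[0,\infty)$ (hence on $(0,\infty)$) to $Y(t):=\tfrac{\sqrt{2}}{2}t\,e_E$, which differs from $X^{O,\alpha}\equiv O$ at every $t>0$.

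The heavy lifting is already contained in Propositions~\ref{behav-O}~(2) and~\ref{behav-1}~(2); the only new step is spotting that a bisector control realizes both phenomena at once, by freezing the restart/limit dynamics at $O$ while still allowing a nontrivial approach from an adjacent branch. With those results in hand, the remaining work is simply to verify that the elementary slidings --- down branch $N$ or along branch $E$ --- are governed by Theorem~\ref{edo-interieur}~(1), which is automatic once the trajectory stays inside one branch. The only conceptual obstacle, were the two propositions unavailable, would be Proposition~\ref{behav-1}~(2) itself: establishing that a trajectory arriving at $O$ along branch $N$ under a bisector control does enter branch $W$ rather than follow the control vector into the interior of the plane.
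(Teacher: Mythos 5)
Your proof is correct, and part~(1) is literally the paper's own argument: $x=e_N$, $\alpha\equiv e_{5\pi/4}$, identify the limit trajectory via Proposition~\ref{behav-1}~(2), restart at $O$ at $t=\sqrt{2}$, and invoke the bisector case of Proposition~\ref{behav-O}~(2) to freeze the restarted trajectory. For part~(2), the paper keeps $\alpha\equiv e_{5\pi/4}$ and perturbs the same initial condition, taking $x_n=\frac{1}{n}e_N\to O$: the scaling identity of Lemma~\ref{lem-scaling} then shows $X^{x_n,\alpha}(s)=\frac1n X^{e_N,\alpha}(ns)$, whose locally uniform limit as $n\to\infty$ is the trajectory $s\mapsto \frac{s}{\sqrt2}\,e_W$ on the $W$-branch, distinct from $X^{O,\alpha}\equiv O$. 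You instead choose $\alpha\equiv e_{\pi/4}$ and $x_n=\frac1n e_E$; since $\langle e_{\pi/4},e_E\rangle>0$, each $X^{x_n,\alpha}$ moves outward along branch~$E$ and never reaches $O$, so Theorem~\ref{edo-interieur}~(1) alone yields $X^{x_n,\alpha}(t)=(\frac1n+\frac{\sqrt2}{2}t)e_E$ on all of $[0,\infty)$ and the singleton property of $\omega(x_n,\alpha)$, without invoking the scaling lemma or the more delicate Proposition~\ref{behav-1}~(2). This makes your example for part~(2) slightly more self-contained than the paper's (the paper's $x_n$-trajectory crosses $O$ and bends into branch~$W$, which needs the scaling argument to control uniformly in $n$), at the small cost of using a different control than in part~(1), which breaks the unity of the paper's single example illustrating both pathologies at once.

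One cosmetic remark: in both your choice and the paper's, the control $\alpha$ is not actually in $\mathcal{A}_x$ in the strict sense of~\eqref{def-Ax} (the unperturbed trajectory $X^{x,\alpha}$ of~\eqref{traj} leaves $\Gamma$). This imprecision is present in the statement of the corollary itself and in the paper's own proof, so you have not introduced a new gap.
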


\begin{proof}[Proof of Corollary~\ref{cor-instable}]
  Take $\alpha (t)\equiv e_{5\pi/4}$ and $x=e_N$. Then, $X^{x,\alpha}(t)$ is given
  by~\eqref{traj-W}. In particular, for $s\geq 0$, 
 \[
  X^{x,\alpha}(\sqrt{2}+s)= (-{2}^{-1/2} s,0)
  \not= O\equiv  X^{O, \alpha}(s)= X^{X^{x,\alpha}(\sqrt{2}),\alpha (\sqrt{2}+\cdot)}(s).
  \]
  Replacing the starting point $x=e_N$ above with $x_n=\frac{1}{n}e_N\to O$ as $n\to \infty$, we obtain
  a sequence of trajectories $X^{x_n,\alpha}$ still going to the $W$ branch whereas
  $X^{O,\alpha}\equiv O$ is stuck at the origin.
\end{proof}
  
\section{Convergence of the approximated control problem
to the control problem on $\Gamma$}\label{sec:control}

This section is devoted to the proof of Theorem~\ref{limit-vf}.
We recall that in this section, we assume~\eqref{calc-var}-\eqref{cout-indep}, \emph{i.e.},
\begin{eqnarray*}
  && f(x,a)= a \in A=\overline{B}(0,1) \quad \text{for all $x\in\R^2$, $a\in A$,}\\
  && \ell (x,a)= \ell (x).
\end{eqnarray*}
In particular, $|f|_\infty=1$.

We introduce some notation. For $x\in\R^2$ and $\alpha\in \mathcal{A}=L^\infty([0,\infty), \overline{B}(0,1))$, set
\begin{eqnarray*}
&&  J^\e (x,\alpha):= \int_0^\infty e^{-\lambda t} \ell (X^{x,\alpha,\e}(t))dt, \quad \text{for all $\e>0$,}
\end{eqnarray*}
and
\begin{eqnarray*}
&&  J (x,\alpha, X^{x,\alpha}):= \int_0^\infty e^{-\lambda t} \ell (X^{x,\alpha}(t))dt,
\quad \text{for all $X^{x,\alpha}\in \omega(x,\alpha)$.}
\end{eqnarray*}
With these notations,
$\displaystyle V^\e(x)=\inf_{\alpha\in\mathcal{A}} J^\e (x,\alpha)$ and
$\displaystyle \overline{V}(x)=\inf_{\alpha\in\mathcal{A}, \, X^{x,\alpha}\in \omega(x,\alpha)} J (x,\alpha, X^{x,\alpha})$.

Let us start with the following technical lemma.

\begin{lemma}\label{techn1}
Assume~\eqref{calc-var}. 
Let $x\in\R^2$ and $\overline{x}=\phi_{d_\Gamma}(x)\in\Gamma$
(see~\eqref{x-proj}). Then,

\begin{enumerate}

\item If $\alpha\in \mathcal{A}_{\overline{x}}$ (see~\eqref{def-Ax}), then
$X^{\overline{x},\alpha,\e}=: X^{\overline{x},\alpha}$ does not depend on $\e$, and thus~$\omega( \overline{x}, \alpha)=\{ X^{\overline{x},\alpha}\}$.

\item If $\alpha\in \mathcal{A}$ and
$X^{x,\alpha}\in\omega (x,\alpha)$, then 
$\overline{\alpha}:= (\alpha -\dot{k}^{x,\alpha})\in \mathcal{A}_{\overline{x}}$, $\omega( \overline{x}, \overline{\alpha})=\{ X^{\overline{x},\overline{\alpha}}\}$
with $X^{\overline{x},\overline{\alpha}}(0)=\overline{x}$ and $X^{\overline{x},\overline{\alpha}}(t)=X^{x,\alpha}(t)$
for $t>0$.


\item (Controllability on $\Gamma$) For all $\overline{y}\in \Gamma$ and $\alpha\in \mathcal{A}$,
there exists $\alpha_{\overline{y}}\in \mathcal{A}$
and $0\leq\tau\leq C|x-\overline{y}|$ such that,
for all $X^{\overline{x},\alpha_{\overline{y}}}\in \omega (\overline{x},\alpha_{\overline{y}})$,
$X^{\overline{x},\alpha_{\overline{y}}}\in\Gamma$ on $[0,\tau]$ with $X^{\overline{x},\alpha_{\overline{y}}}(\tau)=\overline{y}$,
and there exists $X^{\overline{y},\alpha}\in \omega (\overline{y},\alpha)$ satisfying
$X^{\overline{x},\alpha_{\overline{y}}}(t)= X^{\overline{y},\alpha}(t-\tau)$ for $t\geq \tau$.
  
\end{enumerate}
\end{lemma}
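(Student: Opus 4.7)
For part (1), the strategy is to observe that if $\alpha \in \mathcal{A}_{\overline{x}}$, then the unperturbed trajectory $X^{\overline{x},\alpha}$ solving $\dot{X} = \alpha$, $X(0) = \overline{x}$ remains on $\Gamma$ for all $t \geq 0$. By~\eqref{cond-d}, $\nabla d \equiv 0$ on $\Gamma$, so the perturbation term in~\eqref{traj-eps} vanishes along this trajectory. Hence $X^{\overline{x},\alpha}$ also solves~\eqref{traj-eps} for every $\varepsilon > 0$, and the uniqueness part of Lemma~\ref{lem-exis} forces $X^{\overline{x},\alpha,\varepsilon} = X^{\overline{x},\alpha}$ and $k^{\overline{x},\alpha,\varepsilon} \equiv 0$. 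The entire family is then independent of~$\varepsilon$, so $\omega(\overline{x},\alpha) = \{X^{\overline{x},\alpha}\}$.

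For part (2), I would exploit the Skorokhod formula~\eqref{ppte-skorokhod} from Theorem~\ref{cv-traj}. Under assumption~\eqref{calc-var}, $f(X^{x,\alpha},\alpha) = \alpha$, so using $X^{x,\alpha}(0^+) = \overline{x}$ and $k^{x,\alpha}(0^+) = x - \overline{x}$ one obtains, for every $t > 0$,
\begin{equation*}
X^{x,\alpha}(t) = \overline{x} + \int_0^t \bigl(\alpha(s) - \dot{k}^{x,\alpha}(s)\bigr)\,ds = \overline{x} + \int_0^t \overline{\alpha}(s)\,ds.
\end{equation*}
Thus $X^{x,\alpha}$ restricted to $(0,\infty)$ coincides with the unperturbed trajectory starting from $\overline{x}$ with control $\overline{\alpha}$. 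To check $\overline{\alpha} \in \mathcal{A}$, note that $\dot{X}^{x,\alpha} = \overline{\alpha}$ almost everywhere, and Theorem~\ref{cv-traj}(4) yields $|\overline{\alpha}(t)| \leq |f|_\infty = 1$ a.e.; this is the crucial input. Since the trajectory lies on $\Gamma$ for all $t \geq 0$ (using Theorem~\ref{cv-traj} for $t > 0$ and the value $\overline{x} \in \Gamma$ at $t=0$), $\overline{\alpha} \in \mathcal{A}_{\overline{x}}$. The uniqueness of the $\omega$-limit then follows by applying part~(1) to $(\overline{x},\overline{\alpha})$.

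For part (3), I would construct $\alpha_{\overline{y}}$ explicitly on $[0,\tau]$ as a concatenation of at most two constant controls along $\Gamma$: moving directly along a branch if $\overline{x}$ and $\overline{y}$ share one, and otherwise first driving to the origin along the branch of $\overline{x}$, then out along the branch of $\overline{y}$. Elementary estimates give $\tau \leq |\overline{x}| + |\overline{y}| \leq \sqrt{2}\,|\overline{x} - \overline{y}|$, and the local Lipschitz continuity of the projection $\phi_d$ (see Appendix~\ref{prop-dist}) yields $|\overline{x} - \overline{y}| \leq C\,|x-\overline{y}|$, whence $\tau \leq C\,|x-\overline{y}|$. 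By construction the corresponding trajectory stays on $\Gamma$ throughout $[0,\tau]$, so part~(1) gives that $X^{\overline{x},\alpha_{\overline{y}},\varepsilon}$ is $\varepsilon$-independent on $[0,\tau]$ and hits $\overline{y}$ at time~$\tau$. I then extend $\alpha_{\overline{y}}$ to $[\tau,\infty)$ by $\alpha_{\overline{y}}(\tau + \cdot) := \alpha$; uniqueness for~\eqref{traj-eps} gives the semigroup identity $X^{\overline{x},\alpha_{\overline{y}},\varepsilon}(\tau + s) = X^{\overline{y},\alpha,\varepsilon}(s)$ for all $\varepsilon, s$. Passing to any subsequence $\varepsilon_n \to 0$ realising $X^{\overline{x},\alpha_{\overline{y}}} \in \omega(\overline{x},\alpha_{\overline{y}})$ and setting $X^{\overline{y},\alpha}(s) := X^{\overline{x},\alpha_{\overline{y}}}(\tau+s)$ gives an element of $\omega(\overline{y},\alpha)$ with the desired matching property.

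The main obstacle is ensuring that $\overline{\alpha}$ in part~(2) is admissible: this rests on the a.e.\ derivative bound of Theorem~\ref{cv-traj}(4), itself a consequence of the controllability built into~\eqref{calc-var}. Without this bound, $\overline{\alpha}$ could exceed the unit ball and the identification of the limiting control problem with one set on $\Gamma$ would collapse. The semigroup identity used in part~(3), combined with the singleton property from part~(1) on $[0,\tau]$, is what turns the \emph{a priori} multi-valued object $\omega(\overline{y},\alpha)$ into something compatible with concatenations.
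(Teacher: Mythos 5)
Your proofs of parts~(1) and~(2) follow essentially the same route as the paper: for~(1), the perturbation term $\frac{1}{\e}\nabla d$ vanishes identically along any trajectory contained in $\Gamma$, so uniqueness for~\eqref{traj-eps} forces $X^{\overline{x},\alpha,\e}=X^{\overline{x},\alpha}$; for~(2), the Skorokhod identity~\eqref{ppte-skorokhod} together with the derivative bound of Theorem~\ref{cv-traj}\,(4) (which, under~\eqref{calc-var}, gives $|\overline{\alpha}|\le|f|_\infty=1$ a.e.) shows $\overline{\alpha}\in\mathcal{A}_{\overline{x}}$, and then part~(1) identifies the unique $\omega$-limit. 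The construction of $\alpha_{\overline{y}}$ in part~(3) by concatenating at most two constant controls and then invoking the semigroup property of~\eqref{traj-eps} for each fixed $\e$ is also the paper's argument.

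There is, however, one incorrect justification in part~(3). You appeal to ``local Lipschitz continuity of the projection $\phi_d$'' to deduce $|\overline{x}-\overline{y}|\le C|x-\overline{y}|$. By Proposition~\ref{traj-d1}, $\phi_{d_\Gamma}$ is only $1/2$-H\"older continuous on $\R^2$ --- indeed, from the explicit formula~\eqref{form-proj} it fails to be Lipschitz near the bisectors $\{|x_1|=|x_2|\}$ (take $x=(1,1+\delta)$: then $|\phi_d(x)-\phi_d((1,1))|\sim\sqrt{2\delta}$ while $|x-(1,1)|=\delta$). The paper's proof does not perform this reduction at all: it only establishes $\tau\le C|\overline{x}-\overline{y}|$ and uses the lemma exclusively when $x\in\Gamma$ (see Lemma~\ref{Veps-lip-gamma}), where $x=\overline{x}$. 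If you want to match the statement literally for general $x\in\R^2$, the correct ingredient is not Lipschitz continuity of $\phi_d$ but the displacement estimate $|\phi_d(x)-x|\le\sqrt{2}\,\mathrm{dist}(x,\Gamma)\le\sqrt{2}\,|x-\overline{y}|$, which does follow from~\eqref{form-proj} and, via the triangle inequality $|\overline{x}-\overline{y}|\le|\overline{x}-x|+|x-\overline{y}|$, yields the stated bound. The rest of your part~(3) (the semigroup identity for each $\e$, passing to a subsequence to produce $X^{\overline{y},\alpha}\in\omega(\overline{y},\alpha)$) matches the paper.
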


\begin{remark}
The fact that $\mathcal{A}_{\overline{x}}\not=\emptyset$ is proved in
~\cite[Theorem 2.3]{aot15} the assumptions of which
are satisfied due to~\eqref{calc-var}. The proof of Statement (2) above provides a
concrete way to build such a control~$\overline{\alpha}$. It is used to prove that $\overline{V}\geq V_\Gamma$
on $\Gamma$ in the proof of Proposition~\ref{barV-Vgamma}.
\end{remark}
  
\begin{proof}[Proof of Lemma~\ref{techn1}]\ \\
(1)
If  $\alpha\in \mathcal{A}_{\overline{x}}$, then, by definition, there exists a unique solution
$X^{\overline{x},\alpha}$ of~\eqref{traj}. Since $X^{\overline{x},\alpha}(t)\in\Gamma$ for all $t\geq 0$,
$X^{\overline{x},\alpha}$ is also the unique solution of~\eqref{traj-eps} for all $\e>0$, 
which gives the conclusion.\\
(2)
From Theorem~\ref{cv-traj} and \eqref{calc-var}, any $X^{x,\alpha}\in\omega (x,\alpha)$ is a solution
to $\dot{X}^{x,\alpha}=\overline{\alpha}\in \overline{B}(0,1)$ a.e. $t\in (0,\infty)$, with $X^{x,\alpha}(t)\in \Gamma$
for $t>0$ and $X^{x,\alpha}(0^+)=\overline{x}$. Setting $Y(t):= X^{x,\alpha}(t)$ for $t>0$ and
$Y(0)= \overline{x}$, one has $Y\in\omega( \overline{x}, \overline{\alpha})$. By~(1), we infer
that $Y= X^{\overline{x},\overline{\alpha}}$.\\
(3)
Let $\overline{x},\overline{y}\in \Gamma$, with $\overline{x}\neq\overline{y}$.
Assume that $\overline{x}\in [0,\infty)e_i$ and $\overline{y}\in [0,\infty)e_j$.
We set $\tau:= |\overline{x}-\overline{y}|$ if $i=j$ and $\tau:= |\overline{x}| +|\overline{y}|$ if $i\not= j$. 
Note that $\tau$ is nothing but the geodesic distance between  $\overline{x}$ and $\overline{y}$,
which is equivalent to the Euclidean distance in $\R^2$. In particular, there exists $C>0$ such that $\tau\leq C|\overline{x}-\overline{y}|$
(in our simple case, $C=\sqrt{2}$).
For any $\alpha\in \mathcal{A}$,
we define $\alpha_{\overline{y}}$ in the following way: 
if $i=j$, we set $\alpha_{\overline{y}}(t):= |\overline{y}-\overline{x}|^{-1}(\overline{y}-\overline{x})$
for $t\in [0,\tau]$, otherwise, we set
$\alpha_{\overline{y}}(t):= -|\overline{x}|^{-1}\overline{x}$
for $t\in [0,|\overline{x}|]$ and $\alpha_{\overline{y}}(t)= |\overline{y}|^{-1}\overline{y}$
for $t\in [|\overline{x}|, \tau]$. For $t\geq \tau$, in both case we set $\alpha_{\overline{y}}(t)= \alpha(t-\tau )$.
Let  $X^{\overline{x},\alpha_{\overline{y}}}\in \omega (\overline{x},\alpha_{\overline{y}})$ and $\e_k\to 0$ such that
$X^{\overline{x},\alpha_{\overline{y}},\e_k}\to X^{\overline{x},\alpha_{\overline{y}}}$ locally uniformly on $[0,\infty)$.
By construction of $\alpha_{\overline{y}}$, we have, for all $k$,
$X^{\overline{x},\alpha_{\overline{y}}, \e_k}= X^{\overline{x},\alpha_{\overline{y}}}$ on $[0,\tau]$
with $X^{\overline{x},\alpha_{\overline{y}}, \e_k}(\tau)= X^{\overline{x},\alpha_{\overline{y}}}(\tau)=\overline{y}$.
Moreover, by the semigroup property for~\eqref{traj-eps},
we have $X^{\overline{x},\alpha_{\overline{y}},\e_k}(t)=X^{\overline{y},\alpha,\e_k}(t-\tau)$
for $t\geq\tau$. Using Theorem~\ref{cv-traj}, it follows that there exists
$X^{\overline{y},\alpha}\in \omega(\overline{y},\alpha)$ such that
$X^{\overline{x},\alpha_{\overline{y}}}(t)=X^{\overline{y},\alpha}(t-\tau)$
for $t\geq\tau$.
\end{proof}  

Two first results about the value functions can be deduced from Lemma~\ref{techn1}.

\begin{proposition}\label{barV-Vgamma}
Assume~\eqref{hyp-f-ell},~\eqref{calc-var}-\eqref{cout-indep}.
Then $\overline{V}= V_\Gamma$ on $\Gamma$.
\end{proposition}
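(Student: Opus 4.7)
The plan is to establish the two inequalities $\overline{V}\leq V_\Gamma$ and $\overline{V}\geq V_\Gamma$ on $\Gamma$, using Lemma~\ref{techn1} together with the assumption~\eqref{cout-indep} in a crucial way. Fix $x\in\Gamma$; note that $\overline{x}=\phi_d(x)=x$ since $x\in\Gamma$.

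For the easy inequality $\overline{V}(x)\leq V_\Gamma(x)$, I take any $\alpha\in\mathcal{A}_x\subset\mathcal{A}$. Lemma~\ref{techn1}(1) asserts that $X^{x,\alpha,\varepsilon}\equiv X^{x,\alpha}$ does not depend on $\varepsilon$, so $\omega(x,\alpha)=\{X^{x,\alpha}\}$ and moreover $k^{x,\alpha,\varepsilon}\equiv 0$. By~\eqref{cout-indep} we get
\begin{eqnarray*}
\overline{V}(x)\leq J(x,\alpha,X^{x,\alpha})=\int_0^\infty e^{-\lambda t}\ell(X^{x,\alpha}(t))\,dt=\int_0^\infty e^{-\lambda t}\ell(X^{x,\alpha}(t),\alpha(t))\,dt,
\end{eqnarray*}
and taking the infimum over $\alpha\in\mathcal{A}_x$ yields $\overline{V}(x)\leq V_\Gamma(x)$.

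For the reverse inequality $\overline{V}(x)\geq V_\Gamma(x)$, I start from an arbitrary $\alpha\in\mathcal{A}$ and an arbitrary $X^{x,\alpha}\in\omega(x,\alpha)$. The idea is to replace the control $\alpha$ by the ``projected'' control $\overline{\alpha}:=\alpha-\dot{k}^{x,\alpha}$ given by Lemma~\ref{techn1}(2). Under~\eqref{calc-var}, this is exactly $\overline{\alpha}=\dot{X}^{x,\alpha}$, and Theorem~\ref{cv-traj}(4) gives $|\overline{\alpha}|\leq|f|_\infty=1$ a.e., so $\overline{\alpha}\in\mathcal{A}$. By Lemma~\ref{techn1}(2), $\overline{\alpha}\in\mathcal{A}_{\overline{x}}=\mathcal{A}_x$ and the associated trajectory $X^{\overline{x},\overline{\alpha}}$ coincides with $X^{x,\alpha}$ for every $t>0$. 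Since the two trajectories differ at most at $t=0$ and $\ell$ does not depend on the control by~\eqref{cout-indep}, we obtain
\begin{eqnarray*}
V_\Gamma(x)\leq\int_0^\infty e^{-\lambda t}\ell(X^{\overline{x},\overline{\alpha}}(t),\overline{\alpha}(t))\,dt=\int_0^\infty e^{-\lambda t}\ell(X^{x,\alpha}(t))\,dt=J(x,\alpha,X^{x,\alpha}).
\end{eqnarray*}
Taking the infimum over all admissible pairs $(\alpha,X^{x,\alpha})$ yields $V_\Gamma(x)\leq\overline{V}(x)$.

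The only delicate point is checking that $\overline{\alpha}\in\mathcal{A}$, for which~\eqref{calc-var} and the derivative bound~\eqref{estim-derivees} are both essential; and noting that the cost is preserved when passing from $\alpha$ to $\overline{\alpha}$, which is the content of assumption~\eqref{cout-indep} and explains why this assumption cannot be dispensed with (as already announced in the discussion of Example~\ref{contre-ex-la}). All other steps are bookkeeping.
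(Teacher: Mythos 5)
Your proof is correct and follows essentially the same strategy as the paper: the easy direction $\overline{V}\leq V_\Gamma$ via Lemma~\ref{techn1}\,(1), and the reverse direction by passing from a near-optimal pair $(\alpha, X^{x,\alpha})$ to the projected control $\overline{\alpha}=\alpha-\dot{k}^{x,\alpha}\in\mathcal{A}_{\overline{x}}$ from Lemma~\ref{techn1}\,(2) and invoking~\eqref{cout-indep} to keep the cost unchanged. Your explicit check that $\overline{\alpha}=\dot{X}^{x,\alpha}$ lands in $\overline{B}(0,1)$ via~\eqref{estim-derivees} is a useful detail that the paper leaves implicit inside the proof of Lemma~\ref{techn1}\,(2).
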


\begin{proof}[Proof of Proposition~\ref{barV-Vgamma}]
Let $x\in\R^2$ and $\eta >0$. There exists $\alpha_\eta\in\mathcal{A}$
and $X^{x,\alpha_{\eta}}\in \omega (x,\alpha_{\eta})$, which
are $\eta$-optimal for $\overline{V}(x)$, \emph{i.e.},
\begin{eqnarray*}
&& \overline{V}(x)+\eta
\geq J(x,\alpha_\eta, X^{x,\alpha_{\eta}}).
\end{eqnarray*}
Applying Lemma~\ref{techn1}\,(2), we obtain that
$X^{x,\alpha_{\eta}}=X^{\overline{x},\overline{\alpha}_{\eta}}$
with $\overline{\alpha}_{\eta}=\alpha_{\eta}-\dot{k}^{x,\alpha_{\eta}}\in \mathcal{A}_{\overline{x}}$.
It follows that
\begin{eqnarray*}
J(x,\alpha_\eta, X^{x,\alpha_{\eta}})=J(x,\alpha_\eta, X^{\overline{x},\overline{\alpha}_{\eta}})
= J(x,\overline{\alpha}_\eta, X^{\overline{x},\overline{\alpha}_{\eta}}) \geq V_\Gamma (\overline{x}),
\end{eqnarray*}
where the second equality comes from the
fact that
$\ell$ does not depend directly on the control by~\eqref{cout-indep},
and the inequality comes from the definition of $V_\Gamma$, see~\eqref{vf-gamma}.
Collecting the previous inequalities and using the fact that $\eta >0$ is arbitrary,
we conclude that~$\overline{V}(x)\geq V_\Gamma (\overline{x})$.

We turn to the opposite inequality. Let $\overline{x}\in\Gamma$ and
$\alpha\in \mathcal{A}_{\overline{x}}$.
By Lemma~\ref{techn1}\,(1), we have $\omega(\overline{x}, \alpha)=\{X^{\overline{x},\alpha}\}$
and it follows that 
$\overline{V}(\overline{x})\leq J(\overline{x}, \alpha, X^{\overline{x},\alpha})$.
Taking the infimum over $\alpha\in \mathcal{A}_{\overline{x}}$, we obtain
$\overline{V}(\overline{x})\leq V_\Gamma (\overline{x})$ as desired.
\end{proof}  

\begin{example}[Counterexample to $\overline{V}=V_\Gamma$ when~\eqref{cout-indep} does not hold]
\label{contre-ex-la}
If the instantaneous cost $\ell$ depends directly on the control~$a$, then one may have $\overline{V}(x)<V_\Gamma(x)$.

Consider the case where $\ell$ is defined by
\[
\ell (x,a)= 2+a_1+a_2+|x_2|,\quad \text{for }  x=(x_1,x_2)\in\R^2,  a=(a_1,a_2)\in\overline{B}(0,1).
\]
Note that $\ell$ is not bounded as required in~\eqref{hyp-f-ell}. We choose not to truncate $\ell$ to simplify
the computations.
We estimate the value functions for $x=e_N=(0,1)$.

First, choosing $\alpha(t)\equiv e_{\frac{5\pi}{4}}$, from 
Proposition~\ref{behav-1} (2) one has $\omega (e_N , \alpha)=\{ X^{e_N , e_{\frac{5\pi}{4}}}\}$ with
\begin{eqnarray*}
  && X^{e_N ,  e_{\frac{5\pi}{4}}  }(t)=
  \left\{
  \begin{array}{cl}
    \big(0,1 -t/\sqrt{2}\big) & \text{if \ $t\in [0, \sqrt{2}]$,}\\
   \left(1 -t/\sqrt{2}, 0\right) &   \text{if \ $t\geq\sqrt{2}$,}
  \end{array}
  \right.
  \end{eqnarray*}    
and a straightforward computation leads to
\begin{eqnarray*}
  &&   \overline{V}(e_N)\leq J(e_N, \alpha, X^{e_N , \alpha})= \frac{\lambda\sqrt{2}(3-\sqrt{2})-1
    +e^{-\lambda\sqrt{2}}}{\lambda^2\sqrt{2}}.
\end{eqnarray*}

Now, for any control $\beta\in \mathcal{A}_{e_N}$,
by Lemma~\ref{techn1}, $\omega (e_N,\beta)= \{X^{e_N,\beta}\}$ with $X^{e_N,\beta}(t)=e_N+\int_0^t\beta(s)ds\in\Gamma$,
and $\beta_1(t)\beta_2(t)=0$ a.e. $t\geq0$. Moreover, since~$\beta(t)\in\overline{B}(0,1)$ a.e. $t\geq0$,
on the one hand we have
$$
\ell (X^{e_N,\beta}(t), \beta(t))=2+\beta_1(t)+\beta_2(t)+|X_2^{e_N,\beta}(t)|\geq 1+|X_2^{e_N,\beta}(t)|, \quad \text{for all $t\geq 0.$}
$$
On the other hand,
$\inf\{t\geq0 :X^{e_N,\beta}=O\}\geq 1$ and $|X^{e_N,\beta}_2|\geq1-t$, for all $t\in[0,1]$.
It follows that
$$
J(e_N,\beta, X^{e_N,\beta})\geq
\int_0^1 e^{-\lambda t}(2-t)dt +\int_1^\infty  e^{-\lambda t}dt
= \frac{2\lambda-1+e^{-\lambda}}{\lambda^2}.
$$
Taking the infimum with respect to $\beta\in \mathcal{A}_{e_N}$, we finally obtain the desired strict inequality
$$
\overline{V}(e_N)\leq  \frac{\lambda\sqrt{2}(3-\sqrt{2})-1+e^{-\lambda\sqrt{2}}}{\lambda^2\sqrt{2}} <
 \frac{2\lambda-1+e^{-\lambda}}{\lambda^2} \leq V_\Gamma (e_N).
 $$

\end{example}


\begin{lemma}\label{Veps-lip-gamma}
Assume~\eqref{hyp-f-ell}. Then, $V^\e\in C(\R^2)$ for each $\e >0$ and
$|V^\e|\leq \lambda^{-1}M$. If, in addition, we assume~\eqref{calc-var}, then, $V^\e$ is
uniformly Lipschitz on $\Gamma$, \emph{i.e.}, there exists $C>0$ independent of $\e$ such that
\begin{eqnarray*}
  &&  |V^\e (x)-V^\e (y)|\leq C|x-y| \quad \text{for all $x,y\in\Gamma$.}
\end{eqnarray*}
\end{lemma}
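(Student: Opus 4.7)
The plan is to handle the two assertions separately, exploiting the fact that the penalization $\e^{-1}\nabla d$ vanishes identically on $\Gamma$.

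\textbf{Boundedness and continuity on $\R^2$.} The bound $|V^\e|\leq M/\lambda$ is immediate from $|\ell|\leq M$ in~\eqref{hyp-f-ell}. For continuity at fixed $\e>0$, I would fix $x,y\in\R^2$ and compare the trajectories issued from $x$ and $y$ with the \emph{same} control $\alpha$. By~\eqref{bornXeps}, both trajectories remain inside $\overline{B}(0,R)$ on $[0,T]$ for $R=\max(|x|,|y|)+\sqrt{2}MT$. On this ball $\nabla d$ is Lipschitz with some constant $L_R$, so the Cauchy--Lipschitz estimate and Gr\"onwall's inequality give
\begin{equation*}
|X^{x,\alpha,\e}(t)-X^{y,\alpha,\e}(t)|\leq |x-y|\exp\!\Bigl((M+L_R/\e)t\Bigr), \qquad t\in[0,T].
\end{equation*}
Using the modulus $m_\ell$ from~\eqref{hyp-f-ell} together with a truncation at a fixed large $T$,
\begin{equation*}
|V^\e(x)-V^\e(y)|\leq \int_0^T e^{-\lambda t}m_\ell\!\bigl(|x-y|e^{(M+L_R/\e)t}\bigr)dt+\frac{2M}{\lambda}e^{-\lambda T},
\end{equation*}
and sending first $|x-y|\to 0$ (dominated convergence) and then $T\to\infty$ yields continuity of $V^\e$.

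\textbf{Uniform Lipschitz bound on $\Gamma$ under~\eqref{calc-var}.} The idea is to use the controllability on $\Gamma$ furnished by Lemma~\ref{techn1}\,(3). Given $x,y\in\Gamma$, I construct a control $\alpha_y$ exactly as in that proof: if $x,y$ lie on the same branch $[0,\infty)e_i$ I move along $\pm e_i$ during the interval $[0,|x-y|]$; if they lie on different branches I first drive $x$ to $O$ and then $O$ to $y$. In both cases the resulting path stays on $\Gamma$ and reaches $y$ at a time $\tau\leq \sqrt{2}|x-y|$. The crucial observation, coming from~\eqref{choix-d} and~\eqref{cond-d}, is that $\nabla d\equiv 0$ on $\Gamma$, so this same path is, for \emph{every} $\e>0$, the unique solution of~\eqref{traj-eps} with control $\alpha_y$. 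In particular the travel time $\tau$ and the trajectory on $[0,\tau]$ do not depend on $\e$.

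\textbf{Conclusion via the dynamic programming principle.} I then apply the (standard) DPP for the infinite horizon problem~\eqref{val-eps}--\eqref{traj-eps} at time $\tau$ to the control obtained by concatenating $\alpha_y$ on $[0,\tau]$ with an arbitrary control on $[\tau,\infty)$ to get
\begin{equation*}
V^\e(x)\leq \int_0^\tau e^{-\lambda t}\ell\bigl(X^{x,\alpha_y,\e}(t)\bigr)\,dt+e^{-\lambda\tau}V^\e(y).
\end{equation*}
Using $|\ell|\leq M$ for the first term and $|V^\e(y)|\leq M/\lambda$ together with $1-e^{-\lambda\tau}\leq\lambda\tau$ for the second, I obtain $V^\e(x)-V^\e(y)\leq 2M\tau\leq 2\sqrt{2}\,M|x-y|$. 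Interchanging $x$ and $y$ gives the asserted uniform Lipschitz estimate.

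\textbf{Main obstacle.} Neither step is deep, but the substantive point — and what makes the Lipschitz constant $\e$-independent — is verifying that the explicit control $\alpha_y$ produces a trajectory staying on $\Gamma$ \emph{for the perturbed ODE at every $\e>0$}. This relies entirely on the vanishing of $\nabla d$ on $\Gamma$ and on $\alpha_y$ being tangent to the edge, features specific to the choice~\eqref{choix-d}; without them, the penalization term would make the trajectory (and the travel time from $x$ to $y$) depend on $\e$ and the argument would collapse.
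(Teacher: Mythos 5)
Your proposal is correct and follows essentially the same route as the paper: both rely on the controllability construction of Lemma~\ref{techn1}\,(3), exploiting that $\nabla d$ vanishes on $\Gamma$ so the travel time and trajectory are $\e$-independent, and then conclude via the uniform bound $|V^\e|\leq\lambda^{-1}M$. The only cosmetic differences are that you spell out the continuity argument (the paper cites \cite{barles94,bcd97}) and you phrase the concatenation step via the dynamic programming principle rather than by writing out the cost functional $J^\e(x,\alpha_y)$ directly, but the underlying estimate is identical.
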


\begin{proof}[Proof of Lemma~\ref{Veps-lip-gamma}]
The first property is classical, see~\cite{barles94, bcd97}. The uniform boundedness
of $V^\e$ comes from the boundedness of $\ell$.

We turn to the proof of the uniform Lipschitz bound. For the sake of notations, we
prove the result under Assumption~\eqref{cout-indep}. Choose $x,y\in\Gamma$, any $\alpha\in \mathcal{A}$, and consider~$\alpha_y\in\mathcal{A}_x$ given by
Lemma~\ref{techn1}\,(3). We have
\begin{eqnarray*}
  V^\e (x) \leq  J^{\e}(x,\alpha_y)
  &=& \int_0^{\tau}e^{-\lambda t}\ell(X^{x,\alpha_y ,\e}(t))dt
  + e^{-\lambda \tau} \int_{0}^{\infty} e^{-\lambda t}\ell(X^{x,\alpha_y ,\e}(\tau+t))dt
\\
&=& \int_0^{\tau}e^{-\lambda t}\ell(X^{x,\alpha_y ,\e}(t))dt
+ e^{-\lambda \tau} \int_{0}^{\infty} e^{-\lambda t}\ell(X^{y,\alpha ,\e}(t))dt
\\
&\leq&
C|x-y|+ e^{-\lambda \tau} J^{\e}(y,\alpha).
\end{eqnarray*}
Taking the infimum over $\alpha\in \mathcal{A}$, we infer (for a positive constant $C$,
which may vary from line to line) that
\begin{eqnarray*}
V^\e (x)
&\leq& C|x-y|+e^{-\lambda \tau}  V^\e (y)\leq  C|x-y|+|1-e^{-\lambda \tau}| \lambda^{-1}M +  V^\e (y)\\
&\leq& C|x-y|+ V^\e (y).
\end{eqnarray*}
This ends the proof.
\end{proof}  

To obtain further properties of the value functions, we need the following crucial result which, roughly speaking, states that any trajectory can be closely followed by a trajectory remaining on the network $\Gamma$.

\begin{proposition}[Trajectory tracking]
\label{suivi-traj-1}
Assume~\eqref{hyp-f-ell}.
For all $x\in\R^2$, $\alpha\in \mathcal{A}$, $0< \gamma <1$
and $\e >0$ small enough,
there exists $\displaystyle \Gamma\ni\overline{x}_\e \mathop{\to}_{\e\to 0} \overline{x}$
and $\beta=\beta_{\alpha, \e}\in \mathcal{A}_{\overline{x}_\e}$ such that
\begin{eqnarray}\label{estim-traj-111}
  && |X^{x,\alpha,\e}(t)-X^{\overline{x}_\e,\beta,\e}(t)|\leq  C( \e^{\gamma/8} +  \e^{5\gamma/24}t)
  \quad \text{for all $t\geq C\e^{1-\gamma}$,}
\end{eqnarray}
where $C$ depends only on $x$ and $|f|_\infty$.
\end{proposition}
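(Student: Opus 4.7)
The plan is to use Proposition~\ref{prop-inv}(3) to show that after a short transient time $\tau_\e$ of order $\e^{1-\gamma}$, the trajectory $X^{x,\alpha,\e}$ is trapped in the thin sublevel set $Z(\e^{4\gamma/3})$, which sits within a tubular neighborhood of $\Gamma$ whose width shrinks with $\e$ (via the explicit form $d=x_1^2x_2^2$). Define $\overline{x}_\e$ to be the nearest-point projection of $X^{x,\alpha,\e}(\tau_\e)$ onto $\Gamma$. Arguing as in Step~4 of the proof of Lemma~\ref{saut-sur-reseau} (with $\e t_\e$ replaced by $\tau_\e$), one has $X^{x,\alpha,\e}(\tau_\e)\to\overline{x}$ and hence $\overline{x}_\e\to\overline{x}$ as $\e\to 0$.

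To construct the tracker, let $e_{i(t)}$ denote the direction of an axis closest to $X^{x,\alpha,\e}(t)$ at each $t\geq\tau_\e$, and set
\[
\beta(t):=\langle f(X^{x,\alpha,\e}(t),\alpha(t)),e_{i(t)}\rangle\, e_{i(t)} \quad\text{for } t\geq\tau_\e,
\]
while on $[0,\tau_\e]$ we take $\beta$ to be a short bridging control along $\Gamma$ starting from $\overline{x}_\e$. By construction $|\beta|\leq|f|_\infty$, and the associated unperturbed trajectory $X^{\overline{x}_\e,\beta}$ never leaves $\Gamma$, so the singular term in~\eqref{traj-eps} vanishes along it. As in Lemma~\ref{techn1}(1) this yields $X^{\overline{x}_\e,\beta,\e}=X^{\overline{x}_\e,\beta}$ and $\beta\in\mathcal{A}_{\overline{x}_\e}$.

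The error estimate then follows from decomposing $X^{x,\alpha,\e}(t)-X^{\overline{x}_\e,\beta,\e}(t)$ into a transverse component (the distance from $X^{x,\alpha,\e}(t)$ to its projection on $\Gamma$, controlled by the layer width) and a tangential component. For the latter, the tangential coordinates of both trajectories along the current branch satisfy ODEs whose right-hand sides coincide up to a defect coming from the Lipschitz bound on $f$ and from the transverse error. A Gronwall-type integration yields a linear-in-$t$ error, and a careful balancing of the exponents coming from the entry-time estimate, the layer width, and the Lipschitz defect produces precisely the rates $\e^{\gamma/8}$ and $\e^{5\gamma/24}$ appearing in~\eqref{estim-traj-111}.

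The main obstacle is the coordination of the branch identification when $X^{x,\alpha,\e}$ is close to the vertex~$O$: there $i(t)$ may flicker between several axes, and the tubular estimate degenerates since the Euclidean distance from $Z(\lambda)$ to~$\Gamma$ is not uniformly bounded near the origin when $|x|$ is small. This is reminiscent of the Zeno-type behavior exhibited in Lemma~\ref{lem:traj-visite-tout}. Handling these near-vertex transitions requires absorbing the branch-mismatch error into the layer bound and re-optimizing the exponents, which is why~\eqref{estim-traj-111} features $\gamma/8$ and $5\gamma/24$ rather than the more natural $\gamma/3$ given by the direct layer-width estimate.
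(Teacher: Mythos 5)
Your first step — using Proposition~\ref{prop-inv}(3) to confine $X^{x,\alpha,\e}$ to the thin layer $Z(\e^{4\gamma/3})$ after a time $O(\e^{1-\gamma})$, and taking $\overline{x}_\e$ near $X^{x,\alpha,\e}(C\e^{1-\gamma})$ — matches the paper, and you correctly single out the branch-identification near $O$ as the crux of the matter. But your construction of $\beta$ does not actually resolve it, and the resulting argument has a genuine gap.

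Setting $\beta(t)=\langle f(X^{x,\alpha,\e}(t),\alpha(t)),e_{i(t)}\rangle e_{i(t)}$ gives a control that depends only on the \emph{perturbed} trajectory, so the tracker is simply $X^{\overline{x}_\e,\beta,\e}(t)=\overline{x}_\e+\int_{\tau_\e}^t\beta(s)\,ds$ (the singular term vanishing is contingent on the tracker staying on $\Gamma$, which is precisely what must be proved). There is nothing forcing this integral curve to remain on $\Gamma$: if $i(t)$ switches, say from $E$ to $N$, at a time when the tracker sits at a nonzero point of the $E$-axis — and the accumulated tangential error makes such a mismatch unavoidable — then $\beta$ pushes the tracker transversally off $\Gamma$, contradicting $\beta\in\mathcal{A}_{\overline{x}_\e}$. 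Your closing paragraph acknowledges this but proposes to ``absorb the branch-mismatch error into the layer bound and re-optimize exponents,'' which is not a fix: the problem is qualitative (the tracker leaves $\Gamma$), not a matter of exponents. The paper sidesteps this entirely by \emph{not} defining a control-driven ODE for the tracker. It constructs the trajectory $\underline{Y}$ directly as a coordinate-wise modification of $X^{x,\alpha,\e}$: take $Y_N=\max\{\e^{\gamma/8},X_2^{x,\alpha,\e}\}$, $Y_E=\max\{\e^{\gamma/8},X_1^{x,\alpha,\e}\}$, then shift by $-\e^{\gamma/8}$ and rescale by $(1+C\e^{5\gamma/24})^{-1}$. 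The threshold and shift guarantee that $\underline{Y}=O$ exactly on the set of times when $X^{x,\alpha,\e}$ lies in the corner region $\mathcal{C}_O$, so branch transitions occur only when the tracker is already at the vertex, giving continuity with values in $\Gamma$ for free. The control is then defined a posteriori as $\beta=\dot{\underline{Y}}$.

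A second, smaller issue: your proposed error analysis via Gr\"onwall would couple the two trajectories through the Lipschitz constant of $f$ and produce a factor $e^{Lt}$, which is incompatible with the stated linear bound $C(\e^{\gamma/8}+\e^{5\gamma/24}t)$. The paper avoids any Gr\"onwall comparison: since $\underline{Y}$ is an explicit, pointwise-defined deformation of $X^{x,\alpha,\e}$, the error estimate follows algebraically from the layer width (giving $\e^{\gamma/8}$ and $\e^{13\gamma/24}$ terms) and from the tangential speed excess $|f|_\infty C\e^{5\gamma/24}$ of Lemma~\ref{controle-vitesse-123} (giving the $\e^{5\gamma/24}t$ term after integration). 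The slowdown factor $(1+C\e^{5\gamma/24})^{-1}$ is precisely what you are missing: it corrects the tangential speed so that $|\dot{\underline{Y}}|\le\sqrt{2}|f|_\infty$, which is needed for $\beta$ to be an admissible control.
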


\begin{remark} 
Since the control $\beta$ keeps the trajectory on $\Gamma$,
we have $\omega (\overline{x}_\e, \beta)=\{X^{\overline{x}_\e,\beta}\}$
with $X^{\overline{x}_\e,\beta}=X^{\overline{x}_\e,\beta,\e}$. 
\end{remark}

\begin{proof}[Proof of Proposition~\ref{suivi-traj-1}]
We fix  $x\in\R^2$, $\alpha\in \mathcal{A}$,  $0< \gamma <1$,
$\e >0$,  $T\geq 4d(x)^{1/4}\e^{1-\gamma}$ (see~\eqref{reach-4-3}),
and aim at establishing~\eqref{estim-traj-111}. In the whole proof, $C$ is a positive constant which may change from line
to line, but is independent of $\alpha, \e$ and $T$.

Consider the following  subsets, see Figure~\ref{dess-suivi-traj}:
\begin{eqnarray*}
&& \mathcal{C}_O := \{ 0\leq x_1\leq \e^{\gamma/8}, \ 0\leq x_2\leq \e^{\gamma/8}, \
    d(x)\leq \e^{4\gamma/3}\},\\
&& \mathcal{C}_N := \{ x_1\geq 0, \ x_2>  \e^{\gamma/8}, \ d(x)\leq \e^{4\gamma/3}\},\\
&& \mathcal{C}_E := \{ x_1>  \e^{\gamma/8}, \ x_2\geq 0, \ d(x)\leq \e^{4\gamma/3}\},\\
&& \mathcal{C}_{NE} := \mathcal{C}_O \cup \mathcal{C}_N \cup  \mathcal{C}_E,  
\end{eqnarray*}
while $\mathcal{T}_O$ is the subset of times $t$ when $X^{x,\alpha,\e}(t)$ lies in $\mathcal{C}_O$,
and $\mathcal{T}_N$ (resp.  $\mathcal{T}_E$) is the open subset of times when the trajectory is in $\mathcal{C}_N$
(resp. $\mathcal{C}_E$). Note that all these subsets depend on $\alpha, \e$ (though we do not indicate
it for the sake of notation).

\begin{figure}[ht]
  \begin{center}
\includegraphics[width=6cm]{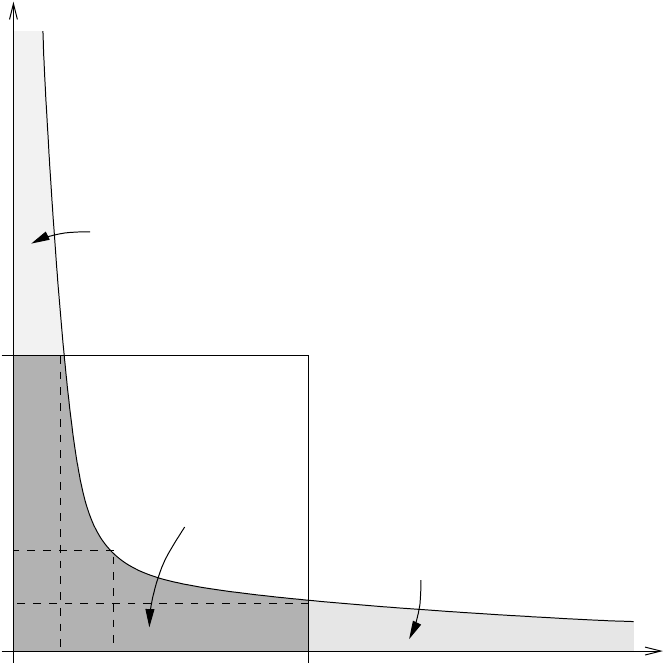}
\caption{}
\label{dess-suivi-traj}
\unitlength=1pt
\begin{picture}(0,0)
\put(-90,34){$\scriptstyle O$}
\put(87,42){$\scriptstyle x_1$}
\put(-83,211){$\scriptstyle x_2$}
\put(-58,148){$\mathcal{C}_N$}
\put(20,64){$\mathcal{C}_E$}
\put(-39,78){$\mathcal{C}_O$}
\put(-8,30){$\scriptstyle \e^{\gamma/8}$}
\put(-55,30){$\scriptstyle \e^{\gamma/3}$}
\put(-82,30){$\scriptstyle \e^{13\gamma\!/\!24}$}
\put(-101,65){$\scriptstyle \e^{\gamma/3}$}
\put(-102,115){$\scriptstyle \e^{\gamma/8}$}
\put(-111,50){$\scriptstyle \e^{13\gamma/24}$}
\put(-69,178){$d=\e^{4\gamma/3}$}
\end{picture}
\end{center}
\end{figure}

The interest of considering $\mathcal{C}_N$ and $\mathcal{C}_E$ is that
the tangential speed of $X^{x,\alpha,\e}$ is under control inside these sets.

\begin{lemma}
\label{controle-vitesse-123}
For all $t\geq   4d(x)^{1/4} \e^{1-\gamma}$, we have $X^{x,\alpha,\e}(t)\in \{d\leq \e^{4\gamma /3}\}$.
Moreover
\begin{eqnarray*}
  && |\dot{X}_2^{x,\alpha,\e}(t)| \leq |f|_\infty (1+C\e^{5\gamma/24}), \quad \text{for a.e. $t\in \mathcal{T}_N$,}\\
  && |\dot{X}_1^{x,\alpha,\e}(t)| \leq |f|_\infty (1+C\e^{5\gamma/24}), \quad \text{for a.e. $t\in \mathcal{T}_E$.}  
\end{eqnarray*}
\end{lemma}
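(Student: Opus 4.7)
The first assertion will be read off directly from Proposition~\ref{prop-inv}. Since $\gamma<1$, for $\e$ small enough one has $\e^{4\gamma/3}\geq\kappa\e^{4/3}$, so part~(2) of that proposition guarantees that $Z(\e^{4\gamma/3})$ is invariant for~\eqref{traj-eps}; and part~(3) provides the entry-time estimate $t^{x,\alpha,\e}(\e^{4\gamma/3})\leq 4\,d(x)^{1/4}\e^{1-\gamma}$. Combining the two yields the containment $X^{x,\alpha,\e}(t)\in\{d\leq\e^{4\gamma/3}\}$ for every $t\geq 4\,d(x)^{1/4}\e^{1-\gamma}$.

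For the velocity estimate on $\mathcal{T}_N$, my plan is to exploit the explicit form $d(x)=x_1^2 x_2^2$. The ODE~\eqref{traj-eps} then reads
\begin{equation*}
\dot{X}_2^{x,\alpha,\e}(t) \;=\; f_2(X^{x,\alpha,\e}(t),\alpha(t)) \;-\; \frac{2\,X_1^{x,\alpha,\e}(t)^2\,X_2^{x,\alpha,\e}(t)}{\e},
\end{equation*}
so the drift term is automatically bounded by $|f|_\infty$ and only the penalization term requires a nontrivial estimate. For $t\in\mathcal{T}_N$, set $X=X^{x,\alpha,\e}(t)\in\mathcal{C}_N$. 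The two active constraints $d(X)\leq\e^{4\gamma/3}$ and $X_2>\e^{\gamma/8}$ together pinch the transverse coordinate, $X_1\leq\e^{2\gamma/3}/X_2<\e^{13\gamma/24}$. Combined with the immediate consequence $X_1 X_2\leq d(X)^{1/2}\leq\e^{2\gamma/3}$ of the distance bound, this gives
\begin{equation*}
X_1^2\, X_2 \;=\; X_1\cdot(X_1 X_2) \;\leq\; \e^{13\gamma/24+2\gamma/3} \;=\; \e^{29\gamma/24},
\end{equation*}
so that $\e^{-1}|\partial_2 d(X)|$ is a small power of $\e$, producing a bound of the claimed form $|f|_\infty(1+C\e^{5\gamma/24})$ on $|\dot{X}_2^{x,\alpha,\e}(t)|$. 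The estimate on $\mathcal{T}_E$ follows from the obvious symmetry swapping the two coordinates, both in $d$ and in the definitions of the cells $\mathcal{C}_N$ and $\mathcal{C}_E$.

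No substantial obstacle is anticipated; the entire argument is careful exponent bookkeeping starting from the two defining inequalities of $\mathcal{C}_N$. The conceptual content is that inside $\mathcal{C}_N$ the trajectory is squeezed in a thin strip along the north branch, of tangential thickness $X_1\lesssim\e^{13\gamma/24}$, and it is precisely this quantitative squeezing that beats the $1/\e$ prefactor appearing in $\nabla d/\e$ when projected onto the tangential direction $e_N$. Note that this mechanism is specific to the tangential component: the corresponding normal component $\e^{-1}|\partial_1 d|$ is large inside $\mathcal{C}_N$, reflecting the strong attraction toward the branch.
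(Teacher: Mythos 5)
Your argument follows the paper's proof essentially step by step: the first assertion via Proposition~\ref{prop-inv}\,(2)--(3), then the velocity bound from the explicit form $\partial_2 d(x)=2x_1^2x_2$ together with the pinching $X_1\leq\e^{13\gamma/24}$ inside $\mathcal{C}_N$ coming from $\sqrt{d}\leq\e^{2\gamma/3}$ and $X_2>\e^{\gamma/8}$. Your intermediate bound $X_1^2 X_2 = X_1\cdot(X_1 X_2)\leq\e^{29\gamma/24}$ is exactly the paper's.

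There is, however, an exponent discrepancy that you do not address, and it is present in the paper as well. After dividing by $\e$, what your computation actually yields for the penalization term is $2\e^{29\gamma/24-1}=2\e^{(29\gamma-24)/24}$, and $(29\gamma-24)/24$ equals $5\gamma/24$ only when $\gamma=1$; for $\gamma<1$ it is strictly smaller (the difference is $\gamma-1$), and for $\gamma<24/29$ it is negative, so the bound does not stay close to $|f|_\infty$ as $\e\to0$ but blows up. Your phrase ``producing a bound of the claimed form'' therefore asserts rather than derives the stated exponent. You should either carry $(29\gamma-24)/24$ explicitly through the rest of the argument, or record that $\gamma$ must be taken close enough to $1$ for the lemma to make sense. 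The downstream use (Proposition~\ref{suivi-traj-1}, then Proposition~\ref{propV-123}\,(1)) only requires some $\gamma$ near $1$, so the final convergence result is unaffected, but as written both your proof and the paper's slide past this point.
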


The proof of Lemma~\ref{controle-vitesse-123} is postponed to Appendix~\ref{sec-proof-lemma123}.

From Lemma~\ref{controle-vitesse-123}, we know that $X^{x,\alpha,\e}(t)\in \{d\leq \e^{4\gamma/3}\}$
for all $t\geq C\e^{1-\gamma}$. By symmetry, it follows that, without loss
of generality, we may choose $x\in \{x_1\geq 0, x_2\geq 0\}$
and assume that $X^{x,\alpha,\e}(t)\in  \mathcal{C}_{NE}$ for all $t\geq C\e^{1-\gamma}$.
Thus, one can write
\begin{eqnarray*}
  && (C\e^{1-\gamma}, \infty)
  = \mathcal{T}_N \cup \mathcal{T}_E \cup \mathcal{T}_O
  = \bigcup_{i\in\N} (s_i^N,t_i^N) \cup  \bigcup_{i\in\N} (s_i^E,t_i^E) \cup \mathcal{T}_O.
\end{eqnarray*}

Define $Y_N, Y_E \in W^{1,\infty}_{\rm loc} ([0,\infty); [0,\infty))$ by
\begin{eqnarray*}
  && Y_N(t)= \max \{ \e^{\gamma/8}, X_2^{x,\alpha,\e}(t) \},\\
  && Y_E(t)= \max \{ \e^{\gamma/8}, X_1^{x,\alpha,\e}(t) \}.
\end{eqnarray*}
For all $t\in \mathcal{T}_N$, $|(0,Y_N(t))- X^{x,\alpha,\e}(t)|\leq C\e^{13\gamma/24}$.
When $t\not\in \mathcal{T}_N$, $(0,Y_N(t))=(0, \e^{\gamma/8})$ is constant.
In this way, we construct a trajectory $(0,Y_N)\in \Gamma$,
which tracks $X^{x,\alpha,\e}$ in $\mathcal{T}_N$,
and, similarly, another trajectory $(Y_E,0)\in \Gamma$, which tracks  $X^{x,\alpha,\e}$ in $\mathcal{T}_E$.
The problem is that, on the one hand, the concatenation of the two trajectories is not a continuous one, and,
on the other hand, these trajectories may go a little bit too fast, see Lemma~\ref{controle-vitesse-123}.
We modify them accordingly by slowing them down, and translating them in order to be able
to define a continuous trajectory. For $t\geq C\e^{1-\gamma}$, set
\begin{eqnarray}
\nonumber  && \underline{Y}_N(t)=  (1 +C\e^{5\gamma/24})^{-1} (Y_N(t)- \e^{\gamma/8}),\\
\nonumber  && \underline{Y}_E(t)=  (1 +C\e^{5\gamma/24})^{-1} (Y_E(t)- \e^{\gamma/8}),\\
\label{def-traj-y}  &&  \underline{Y}=(\underline{Y}_E, \underline{Y}_N).
\end{eqnarray}

We claim that $\underline{Y}$ satisfies suitable properties making it a tracking trajectory on~$\Gamma$,
as stated in Lemma~\ref{traj-y-tilde} below, the proof of which is postponed
to Appendix~\ref{sec-proof-lemma123}.
\begin{lemma}
\label{traj-y-tilde}

The trajectory $\underline{Y}$ defined by~\eqref{def-traj-y} satisfies
\begin{enumerate}
\item $\underline{Y}\in W^{1,\infty}_{\rm loc} ([C\e^{1-\gamma},\infty); \Gamma)$,  
\item $|\dot{\underline{Y}}|\leq \sqrt{2}|f|_\infty$ a.e. on $[C\e^{1-\gamma},\infty)$,
\item Estimate~\eqref{estim-traj-111} holds
  with $ \underline{Y}$ instead of $X^{\overline{x}_\e,\beta,\e}$.
\end{enumerate}
\end{lemma}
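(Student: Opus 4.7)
The strategy is to analyze $\underline{Y}$ on each piece of the time-partition $\mathcal{T}_O\cup\mathcal{T}_N\cup\mathcal{T}_E$ constructed just above, under the WLOG assumption $X^{x,\alpha,\e}(t)\in\mathcal{C}_{NE}$ for $t\geq C\e^{1-\gamma}$. The crucial preliminary observation, used throughout, is that the cones $\mathcal{C}_N$ and $\mathcal{C}_E$ are very thin in the transverse direction: combining $d(X^{x,\alpha,\e})\leq\e^{4\gamma/3}$ with $X_2^{x,\alpha,\e}>\e^{\gamma/8}$ forces $X_1^{x,\alpha,\e}\leq\e^{13\gamma/24}$, which for small $\e$ is strictly less than $\e^{\gamma/8}$ since $13\gamma/24>\gamma/8$. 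Hence $\mathcal{C}_O$, $\mathcal{C}_N$ and $\mathcal{C}_E$ are essentially disjoint.

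For item (1), the case analysis is then immediate. On $\mathcal{T}_N$, the preliminary observation gives $Y_E(t)=\e^{\gamma/8}$, so $\underline{Y}_E(t)=0$ and $\underline{Y}(t)\in[0,\infty)e_N\subset\Gamma$; symmetrically on $\mathcal{T}_E$, $\underline{Y}(t)\in[0,\infty)e_E$; on $\mathcal{T}_O$, $Y_N(t)=Y_E(t)=\e^{\gamma/8}$ gives $\underline{Y}(t)=O$. The local Lipschitz regularity follows because $X^{x,\alpha,\e}$ is locally Lipschitz by Lemma~\ref{lem-exis}, and both $\max\{\e^{\gamma/8},\cdot\}$ and affine maps preserve Lipschitz regularity.

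For item (2), I compute $\dot{\underline{Y}}$ case by case. On $\mathcal{T}_N$, $\underline{Y}_E\equiv 0$ gives $\dot{\underline{Y}}_E=0$, while $|\dot{\underline{Y}}_N|=(1+C\e^{5\gamma/24})^{-1}|\dot{X}_2^{x,\alpha,\e}|\leq |f|_\infty$ by Lemma~\ref{controle-vitesse-123}; symmetrically on $\mathcal{T}_E$, and $\underline{Y}$ is constant on $\mathcal{T}_O$. So $|\dot{\underline{Y}}|\leq|f|_\infty\leq\sqrt{2}|f|_\infty$ with room to spare. For item (3), on $\mathcal{T}_O$ the estimate is immediate because the coordinates of $X^{x,\alpha,\e}(t)$ lie in $[0,\e^{\gamma/8}]$ while $\underline{Y}(t)=O$, giving $|X^{x,\alpha,\e}(t)-\underline{Y}(t)|\leq\sqrt{2}\e^{\gamma/8}$. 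On $\mathcal{T}_N$, the transverse error is $|X_1^{x,\alpha,\e}(t)|\leq\e^{13\gamma/24}\leq\e^{\gamma/8}$, and the longitudinal error expands as
\[
|X_2^{x,\alpha,\e}(t)-\underline{Y}_N(t)|=\left|\frac{C\e^{5\gamma/24}}{1+C\e^{5\gamma/24}}X_2^{x,\alpha,\e}(t)+\frac{\e^{\gamma/8}}{1+C\e^{5\gamma/24}}\right|\leq C\e^{5\gamma/24}|X_2^{x,\alpha,\e}(t)|+\e^{\gamma/8};
\]
plugging in $|X^{x,\alpha,\e}(t)|\leq|x|+\sqrt{2}|f|_\infty t$ from~\eqref{bornXeps} produces the required $C(\e^{\gamma/8}+\e^{5\gamma/24}t)$ bound. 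The case $\mathcal{T}_E$ is symmetric.

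The main obstacle is essentially bookkeeping: one must verify that the cone-width exponent $\gamma/8$, the transverse-confinement exponent $13\gamma/24$ (inherited from $d\leq\e^{4\gamma/3}$), and the velocity-slowdown exponent $5\gamma/24$ (inherited from Lemma~\ref{controle-vitesse-123}) are compatible so that both geometric containment in $\Gamma$ and the required error bound of the form $C(\e^{\gamma/8}+\e^{5\gamma/24}t)$ hold simultaneously. The cleverness of the construction lies in the affine rescaling-and-translation in~\eqref{def-traj-y}, which both places $\underline{Y}$ exactly on $\Gamma$ and absorbs the small tangential velocity excess of $X^{x,\alpha,\e}$ inside the cones.
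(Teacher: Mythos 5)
Your proof is correct and follows essentially the same case analysis over $\mathcal{T}_O\cup\mathcal{T}_N\cup\mathcal{T}_E$ as the paper, using Lemma~\ref{controle-vitesse-123} for the derivative bound in part (2). The one place where you take a slightly different route is in part (3) on $\mathcal{T}_N$: the paper differentiates, gets $|\dot{\underline{Y}}_2(s)-\dot{X}_2^{x,\alpha,\e}(s)|\leq C\e^{5\gamma/24}$ from Lemma~\ref{controle-vitesse-123}, and integrates from the entry time $s_i^N$ of the current excursion into $\mathcal{C}_N$ (where $\underline{Y}(s_i^N)=O$ and $X_2^{x,\alpha,\e}(s_i^N)=\e^{\gamma/8}$, so the initial mismatch is exactly $\e^{\gamma/8}$); you instead expand $X_2^{x,\alpha,\e}(t)-\underline{Y}_N(t)$ algebraically from the definition of $\underline{Y}_N$ and bound $|X_2^{x,\alpha,\e}(t)|$ directly by the a priori linear-growth estimate~\eqref{bornXeps}. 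Both yield the required $C(\e^{\gamma/8}+\e^{5\gamma/24}t)$ bound; your version is marginally more elementary in that it does not invoke Lemma~\ref{controle-vitesse-123} for this step, while the paper's integral argument makes the role of the reset at each entry time $s_i^N$ more geometrically visible. (A very minor nit: $\mathcal{C}_O$, $\mathcal{C}_N$, $\mathcal{C}_E$ are disjoint by their defining inequalities; the content of your preliminary observation is the stronger and genuinely needed fact that $|X_1^{x,\alpha,\e}|\leq\e^{13\gamma/24}<\e^{\gamma/8}$ on $\mathcal{T}_N$, which guarantees $Y_E\equiv\e^{\gamma/8}$ there and hence $\underline{Y}\in\Gamma$.)
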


The end of the proof of Proposition~\ref{suivi-traj-1} consists in proving that, on $[C\e^{1-\gamma}, \infty)$, we can write  $\underline{Y}$ under the form
$X^{\overline{x}_\e,\beta,\e}$ for suitable $\overline{x}_\e$ and $\beta_{\alpha, \e}\in \mathcal{A}_{\overline{x}_\e}$.

First, we set
\begin{eqnarray*}
\overline{x}_\e
:= \underline{Y}(C\e^{1-\gamma})= (1 +C\e^{5\gamma/24})^{-1}
\left\{
\begin{array}{ll}
(0,  X_2^{x,\alpha,\e}(C\e^{1-\gamma}) - \e^{\gamma/8}), & t\in \mathcal{T}_N,\\  
(0,0), & t\in \mathcal{T}_O,\\
(X_1^{x,\alpha,\e}(C\e^{1-\gamma})- \e^{\gamma/8},0), & t\in \mathcal{T}_E.
\end{array}  
\right.
\end{eqnarray*}
Due to Lemma \ref{traj-y-tilde}\,(1), $\overline{x}_\e\in\Gamma$. We claim that $\overline{x}_\e\to \overline{x}$ as $\e\to 0$.
Arguing as in the proof of Lemma~\ref{saut-sur-reseau}, we can prove 
that $X^{x,\alpha,\e}(C\e^{1-\gamma})=(X_1^{x,\alpha,\e}(C\e^{1-\gamma}), X_2^{x,\alpha,\e}(C\e^{1-\gamma}))\to \overline{x}$
as $\e\to 0$.
If $\overline{x}=0$, then it is obvious from the definition of $\overline{x}_\e$, that the sequence~$\overline{x}_\e$ converges to  $\overline{x}$, as $\e\to0$.
If $\overline{x}\not=0$, without loss of generality, we can assume, \emph{e.g.}, that $\overline{x}\in (0,\infty)e_N$.
It follows that, for $\e$ small enough, $X^{x,\alpha,\e}(C\e^{1-\gamma})\in \mathcal{C}_N$. Therefore,
$\overline{x}_\e=(1 +C\e^{5\gamma/24})^{-1}(0,  X_2^{x,\alpha,\e}(C\e^{1-\gamma})- \e^{\gamma/8})\to (0,\overline{x}_2)=\overline{x}$ as $\e\to 0$.

Then, we define a measurable control $\beta=\beta_{\alpha, \e}\in \mathcal{A}_{\overline{x}_\e}$ by
\begin{eqnarray*}
\beta (t)
:=
\left\{
\begin{array}{ll}
(0,0), & t\in [0,C\e^{1-\gamma}),\\*[.4em]  
\dot{\underline{Y}}(t)\in \overline{B}(0,|f|_\infty) & \text{a.e. $t\in [C\e^{1-\gamma},\infty)$.}
\end{array}  
\right.
\end{eqnarray*}
It follows that the corresponding trajectory is
\begin{eqnarray*}
X^{\overline{x}_\e,\beta,\e} (t)
=
\left\{
\begin{array}{ll}
\overline{x}_\e, & t\in [0,C\e^{1-\gamma}),\\*[.4em]  
\underline{Y}(t) & t\in [C\e^{1-\gamma},\infty).
\end{array}  
\right.
\end{eqnarray*}
This shows that $\underline{Y}=X^{\overline{x}_\e,\beta,\e}$ on the interval $[C\e^{1-\gamma}, \infty)$.
The conclusion of Proposition~\ref{suivi-traj-1} holds.
\end{proof}

We are now able to collect further properties of the value functions,
from which we will deduce easily the proof of Theorem~\ref{limit-vf}.

\begin{proposition}\label{propV-123}
Assume~\eqref{hyp-f-ell},~\eqref{calc-var}-\eqref{cout-indep}.
Let $R>0$, $x\in\overline{B}(0,R)$ and $\overline{x}=\phi_{d_\Gamma}(x)\in\Gamma$ (see~\eqref{x-proj}).
Then, for $\e >0$ small enough (depending only on $R>0$ and the given data), we have
\begin{enumerate}

\item $\overline{V}(\overline{x})\leq V^\e (x)+ m_R(\e)$, where $m_R$ is a modulus of continuity depending on $R$,

\item $V^\e (\overline{x})\leq \overline{V}(x)$,

\item  $V^\e (x)\leq V^\e (\overline{x}) + C \e^{1/4}$.

\end{enumerate}
\end{proposition}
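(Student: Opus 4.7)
I would prove the three inequalities in the order (2), (1), (3), since the first two follow from the machinery already in place, while the third requires an extra ingredient based on the gradient descent.

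Part~(2) is a direct application of Lemma~\ref{techn1}. For any $\alpha\in\mathcal{A}$ and $X^{x,\alpha}\in\omega(x,\alpha)$, Lemma~\ref{techn1}\,(2) produces the shifted control $\overline{\alpha}:=\alpha-\dot{k}^{x,\alpha}\in\mathcal{A}_{\overline{x}}$, whose trajectory $X^{\overline{x},\overline{\alpha}}$ coincides with $X^{x,\alpha}$ on $(0,\infty)$. By Lemma~\ref{techn1}\,(1), $X^{\overline{x},\overline{\alpha},\e}=X^{\overline{x},\overline{\alpha}}$ for every $\e>0$, so using that $\ell$ does not depend on the control by~\eqref{cout-indep},
\[
V^\e(\overline{x})\leq J^\e(\overline{x},\overline{\alpha})=\int_0^\infty e^{-\lambda t}\ell(X^{x,\alpha}(t))\,dt=J(x,\alpha,X^{x,\alpha}),
\]
and taking the infimum over $\alpha$ and $X^{x,\alpha}\in\omega(x,\alpha)$ gives~(2).

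For part~(1), fix $\eta>0$ and an $\eta$-optimal control $\alpha$ for $V^\e(x)$. Apply Proposition~\ref{suivi-traj-1} with some fixed $0<\gamma<1$: there exist $\overline{x}_\e\in\Gamma$ with $\overline{x}_\e\to\overline{x}$ and $\beta\in\mathcal{A}_{\overline{x}_\e}$ such that $X^{\overline{x}_\e,\beta,\e}=X^{\overline{x}_\e,\beta}$ lies on $\Gamma$ and tracks $X^{x,\alpha,\e}$ with the estimate~\eqref{estim-traj-111}. By Proposition~\ref{barV-Vgamma} and Lemma~\ref{techn1}\,(1), $\overline{V}(\overline{x}_\e)=V_\Gamma(\overline{x}_\e)\leq J(\overline{x}_\e,\beta,X^{\overline{x}_\e,\beta})$, so the key point is to show $J(\overline{x}_\e,\beta,X^{\overline{x}_\e,\beta})-J^\e(x,\alpha)\leq m_R(\e)$ for some modulus $m_R$ depending on $R$ but not on $\alpha$. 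I would split the cost difference at $T_\e:=C\e^{1-\gamma}$: the part on $[0,T_\e]$ is bounded by $2MT_\e=O(\e^{1-\gamma})$, while on $[T_\e,\infty)$ the continuity modulus $m_\ell$ of $\ell$ combined with~\eqref{estim-traj-111} gives an integrand of the form $e^{-\lambda t}m_\ell\bigl(C(\e^{\gamma/8}+\e^{5\gamma/24}t)\bigr)$. A secondary split at $T_\e^*:=\e^{-5\gamma/48}\to\infty$ bounds it by $\lambda^{-1}m_\ell\bigl(C(\e^{\gamma/8}+\e^{5\gamma/48})\bigr)\to 0$ before $T_\e^*$ and by $2M\lambda^{-1}e^{-\lambda T_\e^*}\to 0$ after. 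Combined with the continuity of $V_\Gamma$ at $\overline{x}$ (so that $V_\Gamma(\overline{x}_\e)\to\overline{V}(\overline{x})$), letting $\eta\to 0$ yields~(1).

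For part~(3), I would take the control $\alpha\equiv 0$, for which~\eqref{calc-var} reduces~\eqref{traj-eps} to $\dot{X}=-\frac{1}{\e}\nabla d(X)$ and hence $X^{x,0,\e}(t)=Z^x(t/\e)$, so that the gradient-descent estimates of Section~\ref{sec:ode-eps} and Appendix~\ref{prop-dist} provide quantitative convergence to $\overline{x}$. The dynamic programming principle on $[0,\tau_\e]$ for a small $\tau_\e>0$ gives
\[
V^\e(x)\leq M\tau_\e+V^\e\bigl(X^{x,0,\e}(\tau_\e)\bigr)=:M\tau_\e+V^\e(y_\e).
\]
One then bridges $V^\e(y_\e)$ to $V^\e(\overline{x})$ in two stages: first, locate $\overline{y}_\e:=\phi_d(y_\e)$ close to $\overline{x}$ via the rate estimates on $Z^x$, and apply Lemma~\ref{Veps-lip-gamma} to get $V^\e(\overline{y}_\e)\leq V^\e(\overline{x})+C|\overline{y}_\e-\overline{x}|$; second, control $V^\e(y_\e)-V^\e(\overline{y}_\e)$ by invoking Proposition~\ref{suivi-traj-1} at $y_\e$, which produces an on-$\Gamma$ trajectory close to $y_\e$ to which Lemma~\ref{Veps-lip-gamma} can again be applied. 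Optimizing $\tau_\e$ against these tail errors should produce the exponent $1/4$. The main obstacle lies precisely here: Lemma~\ref{Veps-lip-gamma} yields a Lipschitz estimate for $V^\e$ \emph{on} $\Gamma$ that is uniform in $\e$, but the pure gradient descent never lands exactly on $\Gamma$, and bridging its off-network and on-network behaviors is what dictates both the precise choice of $\tau_\e$ and the final exponent $1/4$; by contrast parts~(1) and~(2) are routine once Proposition~\ref{suivi-traj-1} and Lemma~\ref{techn1} are available.
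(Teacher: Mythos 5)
Parts~(1) and~(2) of your proposal follow the paper's approach. Part~(2) is the same direct application of Lemma~\ref{techn1}\,(1)--(2). Part~(1) uses Proposition~\ref{suivi-traj-1} together with a splitting of the cost integral, exactly as in the paper; your choice of inner/outer cutoff times ($C\e^{1-\gamma}$ and $\e^{-5\gamma/48}$) differs slightly from the paper's ($\e^\delta$ and $-\delta\lambda^{-1}\ln\e$), but both yield a modulus of continuity, and you correctly note the need to pass from $\overline{x}_\e$ to $\overline{x}$ via the continuity of $V_\Gamma$ (which the paper's proof also uses implicitly).

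Part~(3) is where you diverge from the paper and where there is a genuine gap. The paper's key device is to run, on $[0,\tau^\e)$, not the pure penalization dynamics but the \emph{boosted} control $\alpha^\e(t)=-\nabla d(Y(t))/|\nabla d(Y(t))|$. Along the resulting trajectory $Y$ one has $\frac{d}{dt}d^{1/4}(Y)\leq -C$, so $Y$ travels \emph{along the curve} $Z^x([0,\infty))$ and lands \emph{exactly} on $\overline{x}$ in finite time $\tau^\e\leq C\e^{1/4}$ (the two contributions being the entry time into $\{d\leq\e\}$ from Proposition~\ref{prop-inv}\,(3) and the unit-speed phase once there). After $\tau^\e$ the continuation is exactly $X^{\overline{x},\alpha,\e}(\cdot)$, so the only cost discrepancy is $M\tau^\e\leq CM\e^{1/4}$, with no tracking error. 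With $\alpha\equiv 0$ as you propose, $X^{x,0,\e}(t)=Z^x(t/\e)$ never reaches $\Gamma$ exactly, which forces a bridging step; you acknowledge this obstacle, but the two-stage bridge you sketch does not close it. First, $\overline{y}_\e:=\phi_d(y_\e)$ is not merely ``close to'' $\overline{x}$: since $y_\e=Z^x(\tau_\e/\e)$ lies on the gradient curve through $x$, the semigroup property of the gradient flow forces $\phi_d(y_\e)=\overline{x}$ exactly, so your first stage is vacuous and your second stage is precisely the original inequality $V^\e(y_\e)\leq V^\e(\overline{x})+\text{error}$, now with $y_\e$ in place of $x$. Second, Proposition~\ref{suivi-traj-1} produces an on-$\Gamma$ trajectory tracking a given off-$\Gamma$ one; this gives the comparison in the direction needed for part~(1) (on-$\Gamma$ cost $\leq$ off-$\Gamma$ cost plus error), but part~(3) needs the opposite direction, and any estimate extracted from Proposition~\ref{suivi-traj-1} is expressed through the modulus $m_\ell$, which need not decay like $\e^{1/4}$. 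The boosting trick is the ingredient you are missing; it bypasses tracking entirely and is precisely what makes the clean power $\e^{1/4}$ come out independently of the regularity of $\ell$.
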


\begin{proof}[Proof of Proposition~\ref{propV-123}] \ \\
(1) Let $R >0, \e >0$, $x\in\overline{B}(0,R)$ and $\alpha\in\mathcal{A}$.
From Proposition~\ref{suivi-traj-1}, for every $0<\gamma <1$,
if $\e >0$ is small enough compared to $R$ and $|f|_\infty$,  
then there exists $\Gamma\ni\overline{x}_\e \to \overline{x}$
and $\beta\in \mathcal{A}_{\overline{x}_\e}$ such that
$|X^{x,\alpha,\e}(t)-X^{\overline{x}_\e,\beta,\e}(t)|\leq  C( \e^{\gamma/8} +  \e^{5\gamma/24}t)$
for $t\geq C\e^{1-\gamma}$.
Since $\beta\in \mathcal{A}_{\overline{x}_\e}$, from Lemma~\ref{techn1}\,(1),
we obtain $\omega (\overline{x}_\e, \beta)=\{ X^{\overline{x}_\e, \beta}\}$. Hence,
\begin{eqnarray}\label{ineg591}
\overline{V}(\overline{x}_\e)
& \leq &
J(\overline{x}_\e , \beta, X^{\overline{x}_\e, \beta}) = J^\e (\overline{x}_\e , \beta)\\
\nonumber
&\leq &
J^\e (x , \alpha) + |J^\e (\overline{x}_\e , \beta) - J^\e (x , \alpha)|.
\end{eqnarray}
The constant $C$ below may vary from line to line but does not depend on neither $\e$, nor~$\alpha$.
For $\delta < 1-\gamma$ and $T>1$ to be fixed later, by choosing $\e$ smaller enough, we have
\begin{eqnarray*}
&& |J^\e (\overline{x}_\e , \beta) - J^\e (x , \alpha)|\\
&\leq& 2|\ell|_\infty \int_0^{\e^{\delta}} e^{-\lambda t}dt
+ \int_{\e^\delta}^{T}  e^{-\lambda t} |\ell (X^{\overline{x}_\e,\beta,\e})-\ell (X^{x,\alpha,\e})| dt
+  2|\ell|_\infty \int_{T}^{\infty}  e^{-\lambda t}dt \\
&\leq & C\e^{\delta} + C e^{-\lambda T}+ \int_{\e^{\delta}}^{T}  e^{-\lambda t} m_\ell (C( \e^{\gamma/8} +  \e^{5\gamma/24}t))dt,
\end{eqnarray*}
where $m_\ell$ is the modulus appearing in~\eqref{hyp-f-ell}.
We now choose $T= -\delta \lambda^{-1}\ln (\e)$ so that~$e^{-\lambda T}\leq \e^\delta$. It follows that
\begin{eqnarray*}
|J^\e (\overline{x}_\e , \beta) - J^\e (x , \alpha)|
& \leq &  C\e^{\delta} +  \int_{\e^{\delta}}^{C|\ln \e|}  e^{-\lambda t} m_\ell ( C(\e^{\gamma/8} +  \e^{5\gamma/24} |\ln \e|))dt\\
& \leq &  C\e^{\delta} + C\,   m_\ell ( C(\e^{\gamma/8} +  \e^{5\gamma/24} |\ln \e|)) =:m_R (\e),
\end{eqnarray*} 
with $m_R (\e)\to 0$ as $\e\to 0$. The modulus $m_R$ depends on $R$ through the various constants $C$.
Plugging this estimate into~\eqref{ineg591} and taking the infimum
over $\alpha\in\mathcal{A}$, we obtain (1).
\\
(2) Let $\eta >0$. There exist $\alpha_\eta\in \mathcal{A}$ and $X^{x,\alpha_\eta}\in \omega (x,\alpha_\eta)$, which
are $\eta$-optimal for $\overline{V}(x)$. From Lemma~\ref{techn1}\,(2), we have
$X^{x,\alpha_\eta}=X^{\overline{x},\overline{\alpha}_\eta}$ on $(0,\infty)$,
with $\overline{\alpha}_\eta\in\mathcal{A}_{\overline{x}}$ and
$\omega(\overline{x},\overline{\alpha}_\eta)=\{X^{\overline{x},\overline{\alpha}_\eta}\}$.
It follows that
\begin{eqnarray*}
&& \overline{V}(x) +\eta \geq J(x,\alpha_\eta, X^{x,\alpha_\eta})
= J(\overline{x},\overline{\alpha}_\eta, X^{\overline{x},\overline{\alpha}_\eta})
=J^\e (\overline{x},\overline{\alpha}_\eta)
\geq V^\e (\overline{x}),
\end{eqnarray*}
where we used~\eqref{cout-indep} in the first equality and Lemma~\ref{techn1}\,(1) in the
second one. Since $\eta$ is arbitrary, we conclude (2).
\\
(3) We only need to prove the result when $x\not\in\Gamma$.
We first construct a control  which drives $x$
to $\overline{x}$ in a short time.
There exists a unique solution $Y$ to the ODE
\begin{eqnarray}\label{edo-accel}
&& \dot{Y}= -\frac{\nabla d(Y)}{|\nabla d(Y)|}-\frac{1}{\e}\nabla d(Y), \quad Y(0)=x,
\end{eqnarray}
on the interval $[0,\tau^\e)$ where $\tau^\e:=\inf \{ t>0 : d(Y(t))=0\}$. 
Note that $Y(t)\in Z^x([0,\infty))$ with $d(Y(t))\leq d(Z^x(t))$ for all $t\in [0,\tau^\e)$,
where $Z^x$ is the gradient trajectory~\eqref{Grad-desc-eq}. It follows that, if
$\tau^\e <\infty$, then  $Y(\tau^\e)=\overline{x}$.  
For any $\alpha\in \mathcal{A}$, we define $\alpha^\e\in \mathcal{A}$ by
\begin{eqnarray*}
\alpha^\e(t)=
\left\{
\begin{array}{ll}
  -\frac{\nabla d(Y(t))}{|\nabla d(Y(t))|}, & 0\leq t <\tau^\e,\\
  \alpha (t-\tau^\e), & t\geq \tau^\e.
\end{array}
\right.
\end{eqnarray*}  
By uniqueness of the solution of~\eqref{edo-accel}, we infer that $X^{x,\alpha^\e,\e}=Y$ on $[0,\tau^\e]$.
Hence, for $0\leq t < \tau^\e$, we have
\begin{eqnarray*}
\frac{d}{dt}d^{1/4}(X^{x,\alpha^\e,\e})
&=&   \frac{1}{4}d^{-3/4}(X^{x,\alpha^\e,\e})
\left\langle \nabla d(X^{x,\alpha^\e,\e}), -\frac{\nabla d(X^{x,\alpha^\e,\e})}{|\nabla d(X^{x,\alpha^\e,\e})|}-\frac{1}{\e}\nabla d(X^{x,\alpha^\e,\e})\right\rangle
\\
&=& - \frac{1}{4}\left(1+ \frac{1}{\e}|\nabla d(X^{x,\alpha^\e,\e})|\right)  d^{-3/4}(X^{x,\alpha^\e,\e}) |\nabla d(X^{x,\alpha^\e,\e})|
\\
&\leq & -C(1+ \frac{d^{3/4}(X^{x,\alpha^\e,\e})}{\e} ),
\end{eqnarray*}
since $|\nabla d|\geq 2\sqrt{2} d^{3/4}$ by~\eqref{propri-d}.

By Proposition~\ref{prop-inv}\,(3) with $\gamma=3/4$, we know that
$0< t^\e:=t^{x,\alpha^\e , \e}(\e)\leq \tau^\e$ satisfies
$t^\e\leq C\e^{1/4}$. Integrating the previous inequality on $[t^\e,\tau^\e]$ yields
\begin{eqnarray*}
&& 0=d^{1/4}(X^{x,\alpha^\e,\e}(\tau^\e))
\leq
d^{1/4}(X^{x,\alpha^\e,\e}(t^\e))-C(\tau^\e - t^\e)\leq \e^{1/4} -C(\tau^\e -\e^{1/4}).
\end{eqnarray*} 
Hence,  $X^{x,\alpha^\e,\e}(\tau^\e)=\overline{x}$ with
$\tau^\e \leq C \e^{1/4} <\infty$, which shows that $X^{x,\alpha^\e,\e}$ reaches $\overline{x}$ in a short time. Therefore,
we have
\begin{eqnarray*}
  V^\e (x) \leq  J^{\e}(x,\alpha^\e)
  &=& \int_0^{\tau^\e}e^{-\lambda t}\ell(X^{x,\alpha^\e ,\e}(t))dt
  + e^{-\lambda \tau^\e} \int_{0}^{\infty} e^{-\lambda t}\ell(X^{x,\alpha^\e ,\e}(\tau^\e+t))dt
\\
&\leq & C\e^{1/4} + e^{-\lambda \tau^\e}  \int_{0}^{\infty} e^{-\lambda t}\ell(X^{\overline{x},\alpha ,\e}(t))dt
\\
&=&  C\e^{1/4} + e^{-\lambda \tau^\e} J^{\e}(\overline{x},\alpha)
\\
&\leq& C\e^{1/4} + J^{\e}(\overline{x},\alpha).
\end{eqnarray*}
Taking the infimum over $\alpha\in \mathcal{A}$ and using the
uniform boundedness of $V^\e$, we obtain~(3). Proposition \ref{propV-123} is proved.
\end{proof}

\subsection{Proof of Theorem~\ref{limit-vf}}\label{proof-theo3}
The first part of the theorem is the statement of Proposition~\ref{barV-Vgamma}.
By chaining Inequalities (1), (3) and then (2) (with $\phi_{d_\Gamma}(x)=\overline{x}$ instead of $x$) of
Proposition~\ref{propV-123}, we obtain, for all $R>0$ and
$x\in \overline{B}(0,R)$, that
\begin{eqnarray*}
  && \overline{V}(\overline{x})\leq V^\e (x)+ m_R(\e) \leq V^\e (\overline{x}) + C \e^{1/4} + m_R(\e)
  \leq \overline{V}(\overline{x}) + C \e^{1/4}  + m_R(\e).
\end{eqnarray*}
It follows that $V^\e$ converges locally uniformly to $\overline{V}\circ\phi_d$.
In particular, thanks to Lemma~\ref{Veps-lip-gamma}, $\overline{V}=V_\Gamma$ is Lipschitz continuous
on $\Gamma$. The proof of Theorem~\ref{limit-vf} is complete.~\hfill$\Box$

\appendix
\section{The line network case}\label{line-junction}

For the reader's convenience, we illustrate our approach to the very simple case
$$
\Gamma = \{O\}\cup (0,\infty) e_E \cup (0,\infty) e_W,
$$
when the ambient optimal control problem~\eqref{vf-classique}  in $\R^2$
satisfies Assumptions~\eqref{calc-var}
and~\eqref{cout-indep}, where $\ell$ satisfies~\eqref{hyp-f-ell}.
In this case, choosing the natural function $d_\Gamma(x_1,x_2)=x_2^2$,
which satisfies~\eqref{cond-d},
all the computations can be done explicitly.

The perturbed optimal control~\eqref{val-eps} is governed by the ODE~\eqref{traj-eps}
which reads
\begin{eqnarray*}
  && 
  \left\{
  \begin{array}{l}
    \dot{X}_1^{x,\alpha, \varepsilon}(t)=\alpha_1(t),\\
    \dot{X}_2^{x,\alpha, \varepsilon}(t)=\alpha_2(t) -\frac{2}{\varepsilon} X_2^{x,\alpha, \varepsilon}(t),
  \end{array}
  \right.
  \quad t> 0,
\end{eqnarray*} 
with $X^{x,\alpha, \varepsilon}(0)=x=(x_1,x_2)\in \R^2$.
The explicit solution is
\begin{eqnarray*}
{X}^{x,\alpha, \varepsilon}(t)= \left( x_1+\int_0^t \alpha_1(s)ds \ , \ e^{-\frac{2}{\e}t}x_2 + \int_0^t e^{\frac{2}{\e}(s-t)}\alpha_2(s)ds\right).
\end{eqnarray*} 
In this case, the set $\omega(x,\alpha)$ (see~\eqref{omega-lim}) is reduced to a single element since ${X}^{x,\alpha, \varepsilon}$ converges pointwisely, as $\e\to 0$, to
\begin{eqnarray*}
{X}^{x,\alpha}(t)=
 \left\{
 \begin{array}{ll}
    (x_1,x_2), & t=0,\\
    \left( x_1+\int_0^t \alpha_1(s)ds \ , \ 0 \right), & t>0.
  \end{array}
 \right.
\end{eqnarray*}
The convergence is uniform on every $[\eta, +\infty)$, $\eta >0$. In particular,
\begin{eqnarray*}
 |X^{x,\alpha, \varepsilon}(t)-X^{x,\alpha}(t)| \leq e^{-\frac{2}{\sqrt{\e}}}|x_2|+\frac{\e}{2},
 \quad \text{for $t\in [ \sqrt{\e}, +\infty )$.}
\end{eqnarray*}
Moreover, $\phi_{d_\Gamma}(x)=\overline{x}$ is simply $(x_1,0)$,
$k^{x,\alpha}(t)=(0, x_2+\int_0^t \alpha_2(s)ds)$ for $t>0$,
and $X^{x,\alpha}=X^{\overline{x},\alpha}$ on $(0,\infty)$. The last fact is not clear
when $\omega(x,\alpha)$ is not a singleton, and complicates the general study in the paper.

It follows that, for all $x\in\R^2$,
\begin{eqnarray*}
\overline{V}(x) &:=& \inf_{\alpha \in \mathcal{A}} \int_0^\infty e^{-\lambda t} \ell (X^{x,\alpha}(t))dt
\\
&=&\inf_{\alpha \in \mathcal{A}} \int_0^\infty e^{-\lambda t} \ell \left( x_1+\int_0^t \alpha_1(s)ds \; , \; 0\right)dt
\\
&=&\inf_{\alpha \in \mathcal{A}_{\overline{x}}} \int_0^\infty e^{-\lambda t} \ell ( Y^{\overline{x},\alpha}(t))dt
= V_\Gamma(\overline{x}) = V_\Gamma \circ \phi_d(x),
\end{eqnarray*}
where $\mathcal{A}_x$ is defined by~\eqref{def-Ax} and $Y^{\overline{x},\alpha}$ is the solution of the ODE
$\dot Y^{\overline{x},\alpha}=\alpha$, $Y^{\overline{x},\alpha}(0)=\overline{x}$.

Taking into account~\eqref{hyp-f-ell}, we have
\begin{eqnarray*}
|V^\e(x)- \overline{V}(x)|
&\leq &  \sup_{\alpha \in \mathcal{A}} \int_0^\infty e^{-\lambda t} |\ell (X^{x,\alpha, \varepsilon}(t))-\ell (X^{x,\alpha}(t))|dt\\
&\leq & \int_0^{\sqrt{\e}} 2M dt +
 \sup_{\alpha \in \mathcal{A}} \int_{\sqrt{\e}}^{\infty} e^{-\lambda t} m_\ell (|X^{x,\alpha, \varepsilon}(t)-X^{x,\alpha}(t)|) dt\\
&\leq&
2M \sqrt{\e}+ \frac{1}{\lambda} m_\ell (e^{-\frac{2}{\sqrt{\e}}}|x_2|+\frac{\e}{2}),
\end{eqnarray*}
which proves that $V^\e$ converges locally uniformly in $\R^2$, as $\e\to 0$, to $\overline{V}=V_\Gamma \circ \phi_{d_\Gamma}$.

Let's end by noting that, due to the very simple case considered here (with continuous running cost and dynamics, and
aligned edges), one does not see the junction point and $V_\Gamma$ is nothing but the value function
of the one-dimensional control problem on $\R$ satisfying the one-dimensional Eikonal equation
$$
\lambda V_\Gamma ((x_1,0))+ |\nabla_{x_1} V_\Gamma ((x_1,0))|=\ell (x_1,0), \quad x_1\in\R.
$$

\section{Properties of $d_\Gamma$}\label{prop-dist}

In this section, we state and prove some properties of a function $d_\Gamma:\R^2\to [0,\infty)$
satisfying~\eqref{cond-d}
with the additional assumption that $d_\Gamma$ satisfies a {\L}ojasiewicz inequality on the
whole space. These conditions are trivially satisfied for our function $d_\Gamma$ given by~\eqref{choix-d},
see Proposition~\ref{traj-d1}, but we want to keep some generality on $d_\Gamma$ in order to be able to deal
with general junctions~\eqref{jonction-generale}
(see discussions in~\cite[Chapter 6]{chuberre23}).

\begin{theorem}\label{thm-loja}
Suppose that $d_\Gamma\in C_{\rm loc}^{1,1}(\R^2; [0,\infty))$ satisfies~\eqref{cond-d}
and that there exists $\nu >0$, $\theta\in (0,1)$ such that
\begin{eqnarray}\label{hyp-loja}
&& |\nabla d_\Gamma(x)|\geq \nu d_\Gamma(x)^\theta \text{ for all $x\in\R^2$ \quad ({\L}ojasiewicz inequality).}
\end{eqnarray}
Then,  for any initial position $x\in\R^2$, the unique solution $Z^x : [0,\infty)\to\R^2$ of 
\begin{equation}\label{Grad-desc-eq}
\dot{X}= -\nabla d_\Gamma(X), \ X(0)=x
\end{equation}
has a limit $\overline{x}\in \Gamma$, as $t\to\infty$. Moreover, the map
\begin{equation}\label{x-proj}
\phi_{d_\Gamma}: x\in\R^2 \mapsto
\overline{x}:=\lim_{t\to\infty}Z^x(t)\in\Gamma
\end{equation}
is continuous.
\end{theorem}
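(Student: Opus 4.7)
The plan is the classical {\L}ojasiewicz convergence scheme for gradient flows. Since $d_\Gamma\in C^{1,1}_{\rm loc}(\R^2)$, Cauchy--Lipschitz yields a unique maximal solution of~\eqref{Grad-desc-eq} on some interval $[0,T^*)$. Along this solution, the chain rule gives
\[
\frac{d}{dt}d_\Gamma(Z^x(t)) = -|\nabla d_\Gamma(Z^x(t))|^2 \le 0,
\]
so that $t\mapsto d_\Gamma(Z^x(t))$ is non-increasing and bounded below by $0$. In particular, $Z^x(t)$ stays in the sublevel set $\{d_\Gamma\le d_\Gamma(x)\}$. The main step is the {\L}ojasiewicz trick: on the (open) set where $d_\Gamma(Z^x)>0$, differentiate $d_\Gamma^{1-\theta}(Z^x)$ and apply~\eqref{hyp-loja} to get
\[
\frac{d}{dt}\bigl[d_\Gamma(Z^x)^{1-\theta}\bigr]
=-(1-\theta)\,\frac{|\nabla d_\Gamma(Z^x)|^2}{d_\Gamma(Z^x)^\theta}
\le -(1-\theta)\,\nu\,|\nabla d_\Gamma(Z^x)|
= -(1-\theta)\,\nu\,|\dot Z^x|.
\]
Integrating on any interval $[0,T]\subset[0,T^*)$ on which $d_\Gamma(Z^x)>0$ produces the uniform length bound
\[
\int_0^T|\dot Z^x(t)|\,dt \le \frac{d_\Gamma(x)^{1-\theta}}{(1-\theta)\,\nu}.
\]
(On the complement set, $|\dot Z^x|=|\nabla d_\Gamma(Z^x)|=0$ by~\eqref{cond-d}, so the estimate extends trivially.) This bound keeps $Z^x$ bounded and prevents blow-up, hence $T^*=\infty$; it also makes $Z^x(\cdot)$ a curve of finite length, so the limit $\overline{x}:=\lim_{t\to\infty}Z^x(t)\in\R^2$ exists.

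To identify $\overline{x}\in\Gamma$, I use that $d_\Gamma(Z^x(t))$ is monotone and bounded, hence has a limit $d_\infty\ge0$, and by continuity $d_\infty=d_\Gamma(\overline{x})$. If $d_\infty>0$, monotonicity combined with~\eqref{hyp-loja} would give $\frac{d}{dt}d_\Gamma(Z^x)\le -\nu^2 d_\infty^{2\theta}<0$ for all $t\ge0$, forcing $d_\Gamma(Z^x(t))\to-\infty$, a contradiction. Hence $d_\infty=0$ and, by~\eqref{cond-d}, $\overline{x}\in\Gamma$.

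For the continuity of $\phi_{d_\Gamma}$, the plan is a standard three-term triangle inequality. Fix $x\in\R^2$ and $\e>0$. Applying the above {\L}ojasiewicz length estimate from time $T$ onwards (the computation is translation-invariant in time), one obtains, for every $y\in\R^2$,
\[
|\phi_{d_\Gamma}(y)-Z^y(T)|\le \int_T^\infty|\dot Z^y(t)|\,dt\le \frac{d_\Gamma(Z^y(T))^{1-\theta}}{(1-\theta)\,\nu}.
\]
I choose $T$ large enough so that $d_\Gamma(Z^x(T))^{1-\theta}<(1-\theta)\nu\e/3$. On the compact interval $[0,T]$, continuous dependence for~\eqref{Grad-desc-eq} (using the local Lipschitz property of $\nabla d_\Gamma$ on a bounded neighborhood containing all the $Z^y([0,T])$ for $y$ close to $x$) gives $Z^y(T)\to Z^x(T)$ and hence $d_\Gamma(Z^y(T))\to d_\Gamma(Z^x(T))$. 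Shrinking the neighborhood of $x$ makes each of the three terms $|\phi_{d_\Gamma}(x)-Z^x(T)|$, $|Z^x(T)-Z^y(T)|$ and $|Z^y(T)-\phi_{d_\Gamma}(y)|$ smaller than $\e/3$, yielding continuity. The only mild obstacle is keeping the trajectories confined to a common bounded set so that the local Lipschitz constant of $\nabla d_\Gamma$ can be used uniformly; the length bound, which depends continuously on the initial value through $d_\Gamma(x)^{1-\theta}$, takes care of this.
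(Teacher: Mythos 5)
Your proof is correct and the convergence part follows essentially the same path as the paper: the {\L}ojasiewicz trick applied to $d_\Gamma^{1-\theta}(Z^x)$ yields the uniform length bound $\int_0^T|\dot Z^x|\le d_\Gamma(x)^{1-\theta}/(\nu(1-\theta))$, which simultaneously gives boundedness, global existence, finite length of the orbit and the existence of $\overline{x}$. Your argument that $d_\infty=0$, via the differential inequality $\frac{d}{dt}d_\Gamma(Z^x)\le-\nu^2 d_\infty^{2\theta}$ forcing $d_\Gamma(Z^x(t))\to-\infty$, is a slightly more direct version of the paper's contradiction (the paper instead bounds $\nu\eta^\theta t$ by the total length); both are valid.

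The one genuine difference is in the continuity argument. You fix a time $T$ depending on $x$ and $\e$, use Gr\"onwall for continuous dependence on $[0,T]$, and close with a standard $\e/3$ triangle inequality; this cleanly proves continuity of $\phi_{d_\Gamma}$ but is purely qualitative. The paper instead introduces the \emph{entry time into the $\e$-level set}, $t^{x,\e}$, and bounds it explicitly by $d^{1-\theta}(x)\,\e^{-\theta}/(\nu^2(1-\theta))$; feeding this into Gr\"onwall and optimizing over $\e$ produces an explicit modulus of continuity $\omega_R(|x-y|)$ on each ball $\overline{B}(0,R)$, including a separate treatment of the boundary case $x\in\Gamma$ where one gets a $(1-\theta)$-H\"older estimate. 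Your route is more elementary and reaches the stated conclusion; the paper's route buys quantitative information that is not needed for the statement itself but is in the spirit of the rest of the appendix (cf.\ Proposition~\ref{traj-d1}, which records that $\phi_{d_\Gamma}$ is $1/2$-H\"older for the specific choice~\eqref{choix-d}). Your remark about keeping trajectories in a common bounded set so that the local Lipschitz constant of $\nabla d_\Gamma$ applies uniformly is exactly the point the paper handles with the radius $\overline{R}=R+\sup_{\overline{B}(0,R)}d^{1-\theta}/(\nu(1-\theta))$, so there is no gap there.
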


We refer to {\L}ojasiewicz~\cite{lojasiewicz84} for seminal references on the subject.
\smallskip

\begin{proof}
When there is no ambiguity, we write $d$, $\phi$, $Z$ for $d_\Gamma$, $\phi_{d_\Gamma}$ and $Z^x$, respectively.
Since $\nabla d$ is locally Lipschitz continuous, there exists a unique maximal solution
$Z$ to~\eqref{Grad-desc-eq} defined on $[0,T)$, for all $T>0$.

Using~\eqref{hyp-loja} and \eqref{Grad-desc-eq},  we get, for all $t\in [0,T),$
\begin{eqnarray*}
  && \frac{1}{1-\theta} \frac{d}{dt} d^{1-\theta}(Z)
=  \langle \nabla d(Z), \dot{Z}\rangle d^{-\theta}(Z)
=  - |\nabla d(Z)|^2 d^{-\theta}(Z)
\leq -\nu |\dot{Z}|.
\end{eqnarray*}
We deduce two consequences from this inequality. First,
$t\in [0,T)\mapsto d(Z(t))\in [0,\infty)$ is nonincreasing,
thus $\lim_{t\to T}d(Z(t))=:\ell \geq 0$. Secondly, by integration, for all $0\leq t_1\leq t_2 <T$, 
\begin{eqnarray}\label{ineg-cauchy}
  && |Z(t_2)-Z(t_1)|\leq \int_{t_1}^{t_2}|\dot{Z}(t)|dt
  \leq \frac{1}{\nu(1-\theta)}\left(d^{1-\theta}(Z(t_1))- d^{1-\theta}(Z(t_2))\right).
\end{eqnarray}
Notice that this inequality ensures the finiteness of the length of the trajectories $Z$,
which is a nontrivial result. More simply, we have, for all $0\leq t<T$,
\begin{eqnarray}\label{Zborne}
  && |Z(t)|\leq |Z(0)|+ \frac{1}{\nu(1-\theta)}d^{1-\theta}(Z(0)),
\end{eqnarray}
from which we infer that $Z$ is bounded and the solution is global in time, \emph{i.e.}, $T=\infty$.

We claim that $\ell=0$. Otherwise, there exist $\eta >0$ such that $d(Z(t))\geq \eta$ for all~$t\geq 0$. 
Thanks to~\eqref{hyp-loja} and~\eqref{ineg-cauchy}, we get
\begin{eqnarray}\label{temps-Z}
  && \ \nu \eta^\theta t \leq \nu\! \int_{0}^{t} d^\theta (Z(s))ds
  \leq \! \int_{0}^{t}|\nabla d(Z(s))|ds
  = \!\int_{0}^{t}|\dot{Z}(s)|ds\leq  \frac{1}{\nu(1-\theta)} d^{1-\theta}(Z(0)),
\end{eqnarray}
which is in contradiction with $t$ large enough.

It follows from the claim and~\eqref{ineg-cauchy} that $(Z(t))_{t\geq 0}$ is a Cauchy sequence in $\R^2$
as~$t \to \infty$. Therefore, there exists some $\overline{x}\in\R^2$ such that $Z(t)\to \overline{x}$, as $t \to \infty$.
Since~$\ell=0$ and~\eqref{cond-d} holds, we conclude that $\overline{x}\in \Gamma$
and $\phi$ is well-defined.

We turn to the proof of the regularity of the map $\phi$.
Let $\overline{B}(0,R)\subset \R^2$. Our goal is to bound $|\phi(x)-\phi(y)|$
for  $x,y\in \R^2$ in a suitable way. We start with two preliminaries.

On the one hand, thanks to~\eqref{Zborne},
for all $x\in \overline{B}(0,R)$ and $t\geq 0$, we have
\begin{eqnarray*}
  && |Z^x(t)| \leq  \overline{R}:=R+\frac{1}{\nu(1-\theta)} \sup_{\overline{B}(0,R)}d^{1-\theta}.
\end{eqnarray*}

On the other hand, for $x\in \R^2$ and $\varepsilon \geq 0$, we
define $t^{x,\varepsilon}$ to be the time needed for~$Z^x$ to reach the $\varepsilon$-level set of $d$, namely 
$d(Z(t^{x,\varepsilon}))= \varepsilon$.
If $\varepsilon\geq d(x)$, we set $t^{x,\varepsilon}=0$. When~$x\in \R^2\setminus \Gamma$, $0<\varepsilon < d(x)$,
from~\eqref{temps-Z}, we obtain
\begin{eqnarray}\label{estim-teps}
  && t^{x,\varepsilon} \leq  \frac{d^{1-\theta}(x)}{\nu^2(1-\theta)} \varepsilon^{-\theta}.
\end{eqnarray}

{\it Case 1. $x\not\in \Gamma$.} Without loss of generality, we may assume that
$t^{y,\varepsilon}\leq t^{x,\varepsilon}$. We write
\begin{eqnarray*}
&&  |\phi(x)- \phi(y)|
\leq  |\phi(x)- Z^x(t^{x,\varepsilon}) |+ |Z^x(t^{x,\varepsilon}) - Z^y(t^{x,\varepsilon})|+ |\phi(y)- Z^y(t^{x,\varepsilon}) |,
\end{eqnarray*}
and we estimate each term separately.
For the first one, from~\eqref{ineg-cauchy}, we have
\begin{eqnarray*}
  &&  |\phi(x)- Z^x(t^{x,\varepsilon}) | \leq \int_{t^{x,\varepsilon}}^{\infty}|\dot{Z}^x(t)|dt
  \leq \frac{1}{\nu(1-\theta)} d^{1-\theta}(Z^x(t^{x,\varepsilon}))\leq \frac{\varepsilon^{1-\theta}}{\nu(1-\theta)}.
\end{eqnarray*}
Since $t^{y,\varepsilon}\leq t^{x,\varepsilon}$, we obtain the same estimate for the third term.
For the second term, using~\eqref{Grad-desc-eq}, we infer, for all $0\leq t\leq t^{x,\varepsilon}$,
\begin{eqnarray*}
  |Z^x(t) - Z^y(t)|
  &\leq& |x-y| + \int_0^{t} |\nabla d(Z^x)- \nabla d(Z^y)|dt\\
&\leq & |x-y| +  {\rm Lip}_{\overline{B}(0,\overline{R})}(\nabla d)\int_0^{t}  |(Z^x-Z^y)(t)|dt,
\end{eqnarray*}
where ${\rm Lip}_{\overline{B}(0,\overline{R})}(\nabla d)$ is the Lipschitz constant of $\nabla d$ in
the ball of radius $\overline{R}$ centered at~$O$.
From Gr\"onwall inequality and~\eqref{estim-teps}, we obtain
\begin{eqnarray*}
  |Z^x(t^{x,\varepsilon}) - Z^y(t^{x,\varepsilon})| \leq e^{C \varepsilon^{-\theta}} |x-y|,
\end{eqnarray*}
with $C= \nu^{-2}(1-\theta)^{-1}{\rm Lip}_{\overline{B}(0,\overline{R})}(\nabla d) \sup_{B(0,R)}d^{1-\theta}$.
Finally, we arrive at
\begin{eqnarray*}
&&  |\phi(x)- \phi(y)|
\leq  \frac{2}{\nu(1-\theta)} \varepsilon^{1-\theta} +  e^{C \varepsilon^{-\theta}} |x-y|.
\end{eqnarray*}

{\it Case 2. $x\in \Gamma$.}
If $y\in \Gamma$, then  $|\phi(x)- \phi(y)|=|x-y|$.
When $y\not\in \Gamma$, using~\eqref{ineg-cauchy}, we have
\begin{eqnarray*}
  |\phi(x)- \phi(y)|
  &\leq& |x-y|+ |y- \phi(y)|\\
  &\leq& |x-y|+ \frac{1}{\nu(1-\theta)}d^{1-\theta}(y)\\
  &\leq&  |x-y|+ \frac{1}{\nu(1-\theta)}| d^{1-\theta}(y)-d^{1-\theta}(x)|\\
  &\leq&  |x-y|+ \frac{{\rm Lip}_{\overline{B}(0,\overline{R})}(d)^{1-\theta}}{\nu(1-\theta)} |x-y|^{1-\theta},
\end{eqnarray*}
since $b^{1-\theta}-a^{1-\theta}\leq (b-a)^{1-\theta}$ for $0\leq a\leq b$. 

{\it Conclusion.} Putting together the estimates of Cases 1 and 2, we obtain that
there exists a constant $C_R$ such that for all $x,y\in B(0,R)$,
\begin{eqnarray*}
&&  |\phi(x)- \phi(y)|
\leq  C_R |x-y|^{1-\theta}+ \inf_{\varepsilon >0}\left\{ \frac{2}{\nu(1-\theta)} \varepsilon^{1-\theta} +  e^{C_R \varepsilon^{-\theta}} |x-y|\right\}.
\end{eqnarray*}
It follows that there exists a modulus of continuity $\omega_R$ such that
$|\phi(x)- \phi(y)|\leq \omega_R(|x-y|)$. The theorem is proved.
\end{proof}

\begin{proposition}\label{traj-d1}
The function $d_\Gamma(x)=x_1^2 x_2^2$ defined by~\eqref{choix-d} for $\Gamma$ given by~\eqref{reseau}
satisfies the assumptions of Theorem~\ref{thm-loja}. More precisely,
if $Z^x=(Z_1^x,Z_2^x)$ is the solution of~\eqref{Grad-desc-eq}, then $Z^x(t)$ lies on a
hyperbola orthogonal to the level sets of $d_\Gamma$, that is
\begin{eqnarray}\label{hyperb}
&& (Z_2^x(t))^2-(Z_1^x(t))^2 =x_2^2-x_1^2 \quad \text{for all $t\geq 0$,}
\end{eqnarray}
and
\begin{eqnarray}\label{form-proj}
  && \phi_{d_\Gamma}(x)=\left\{
  \begin{array}{ll}
    {\rm sgn}(x_2) \sqrt{x_2^2-x_1^2} e_N & \text{if $|x_2|\geq |x_1|$,}\\[.6em]
    {\rm sgn}(x_1) \sqrt{x_1^2-x_2^2} e_E & \text{if $|x_1|\geq |x_2|$,}
  \end{array}
  \right.
\end{eqnarray}
is $1/2$-H\"older continuous on $\R^2$.
\end{proposition}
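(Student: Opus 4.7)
The plan is to establish in turn: (i) $d_\Gamma$ satisfies the hypotheses of Theorem~\ref{thm-loja}, (ii) the conservation identity~\eqref{hyperb}, (iii) the explicit formula~\eqref{form-proj}, and (iv) the Hölder bound. Step (i) is immediate: $d_\Gamma(x)=x_1^2 x_2^2$ is a polynomial, hence $C^{1,1}_{\rm loc}$, $\nabla d_\Gamma = (2x_1x_2^2,\, 2x_1^2 x_2)$ vanishes exactly on $\Gamma$ (so~\eqref{cond-d} holds), and~\eqref{propri-d} already gives $|\nabla d_\Gamma| \geq 2\sqrt{2}\, d_\Gamma^{3/4}$, which is the {\L}ojasiewicz inequality~\eqref{hyp-loja} with $\theta = 3/4$ and $\nu = 2\sqrt{2}$.

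For step (ii), the gradient-descent ODE~\eqref{Grad-desc-eq} reads $\dot Z_1 = -2 Z_1 Z_2^2$, $\dot Z_2 = -2 Z_1^2 Z_2$, hence $\frac{d}{dt}(Z_2^2 - Z_1^2) = 2Z_2 \dot Z_2 - 2Z_1 \dot Z_1 = -4 Z_1^2 Z_2^2 + 4 Z_1^2 Z_2^2 = 0$, proving~\eqref{hyperb}. As a byproduct, the identity $Z_i(t) = x_i \exp\!\big(-2\int_0^t Z_j(s)^2 ds\big)$ (with $\{i,j\}=\{1,2\}$) shows that $Z_i(t)$ keeps the sign of $x_i$ throughout. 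For step (iii), since $\overline x := \phi_{d_\Gamma}(x) \in \Gamma$ at least one of its coordinates vanishes, and the conservation law forces $\overline x_2^2 - \overline x_1^2 = x_2^2 - x_1^2$. If $|x_2| > |x_1|$ the right-hand side is positive, which rules out $\overline x_2 = 0$, so $\overline x_1 = 0$ and $\overline x_2^2 = x_2^2 - x_1^2$; the sign of $\overline x_2$ is then dictated by sign-preservation. The case $|x_1| > |x_2|$ is symmetric, and on the diagonal $|x_1| = |x_2|$ both formulas agree and yield $\overline x = O$.

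For step (iv), the plan is to decompose $\phi_{d_\Gamma} = \phi_E\, e_E + \phi_N\, e_N$ with $\phi_N(x) := \mathrm{sgn}(x_2)\sqrt{(x_2^2 - x_1^2)_+}$ (and $\phi_E$ defined symmetrically), and to prove each scalar component is $1/2$-Hölder on every $\overline{B}(0,R)$. The workhorse is the inequality $|\sqrt a - \sqrt b| \leq \sqrt{|a-b|}$ for $a, b \geq 0$, together with the factorization $x_i^2 - y_i^2 = (x_i + y_i)(x_i - y_i)$ to convert squared differences into linear-in-$|x-y|$ bounds with a constant depending only on $R$. The case analysis for $\phi_N$ splits on the sign of $x_2, y_2$ and on whether each point lies in the ``$N$-sector'' $\{x_2^2 \geq x_1^2\}$: when both are in the sector and share the sign of the second coordinate, a direct application gives the bound; when the signs differ, interposing $z$ on the segment $[x,y]$ with $z_2 = 0$ (so $\phi_N(z)=0$ and $|x-z|, |z-y|\leq |x-y|$) reduces the estimate to the same-sign case applied twice; when, say, $x$ is in the $N$-sector but $y$ is not, the key observation is that $y_2^2 - y_1^2 \leq 0$ yields $0 \leq x_2^2 - x_1^2 \leq (x_2^2 - y_2^2) + (y_1^2 - x_1^2)$, which is again controlled linearly by $|x-y|$ on $\overline{B}(0,R)$. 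The argument for $\phi_E$ is symmetric. The main obstacle is the bookkeeping of the sign/sector cases and ensuring the Hölder constant is uniform over the fixed ball $\overline{B}(0,R)$.
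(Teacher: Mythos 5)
Your plan follows the paper's own route exactly: verify the {\L}ojasiewicz hypothesis directly from~\eqref{propri-d}, obtain the conserved quantity $Z_2^2-Z_1^2$ from the antisymmetry of the gradient system, and read off~\eqref{form-proj} from the intersection of the hyperbola with the axes. The paper's actual proof is extremely terse at the last two steps (``we deduce easily \eqref{form-proj}'') and says nothing at all about the H\"older bound, so what you do in steps (iii) and (iv) is a genuine and worthwhile fill-in: the sign-preservation observation $Z_i(t)=x_i\exp\big(-2\int_0^t Z_j^2\big)$ is precisely the missing ingredient that fixes $\mathrm{sgn}(\overline x_2)$ (resp.\ $\mathrm{sgn}(\overline x_1)$), since conservation alone only determines $\overline x$ up to sign; and the $1/2$-H\"older estimate via $|\sqrt a-\sqrt b|\le\sqrt{|a-b|}$ and the factorization $x_i^2-y_i^2=(x_i+y_i)(x_i-y_i)$ is exactly what makes the claim precise. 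One small caveat worth surfacing: because of the linear factor $|x_i+y_i|$ the H\"older constant grows with $R$, so the statement ``$1/2$-H\"older on $\R^2$'' must be read as \emph{locally} $1/2$-H\"older (indeed, along the $x_2$-axis $\phi_{d_\Gamma}$ is the identity, which is not globally $1/2$-H\"older); your formulation ``on every $\overline B(0,R)$'' is the correct reading and is consistent with the $R$-dependent modulus $\omega_R$ appearing in Theorem~\ref{thm-loja}. Apart from spelling out the bookkeeping of the sector/sign cases in step (iv), nothing is missing.
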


\begin{proof}
The function $d_\Gamma$ is polynomial, satisfies~\eqref{cond-d}, and {\L}ojasiewicz inequality~\eqref{hyp-loja}
with $\nu=2\sqrt{2}$ and $\theta=3/4$ (straightforward computation, see~\eqref{propri-d}).
Multiplying the first line of the system~\eqref{Grad-desc-eq} by $Z_1^x(t)$ and the second line by $Z_2^x(t)$
and subtracting, we obtain $\dot{Z}_1^x Z_1^x - \dot{Z}_2^x Z_2^x =0$, from which, we get~\eqref{hyperb}.
Taking the intersection of the hyperbola with the axis, we deduce easily~\eqref{form-proj}.
\end{proof}

\section{Proofs of Propositions~\ref{behav-O} and~\ref{behav-1}}\label{proof-beh-thr-origin}

To perform the proofs, we write the ODE~\eqref{traj-eps}
$\dot{X}^{x,\alpha,\e}=F^\e(X^{x,\alpha,\e},\alpha)$ component-wise
and, since the control $\alpha\equiv e_\theta$ and the starting point are
fixed, we omit the dependence with respect to $x,\alpha$, that is,
\begin{eqnarray}\label{edo-c}
  && 
  \left\{
  \begin{array}{l}
    \dot{X}_1^\varepsilon = F_1^\e(X^\e) = \cos\theta -\frac{2}{\varepsilon} X_1^\varepsilon (X_2^\varepsilon)^2,\\[1.5mm]
     \dot{X}_2^\varepsilon = F_2^\e(X^\e) = \sin\theta -\frac{2}{\varepsilon} (X_1^\varepsilon)^2 X_2^\varepsilon,
  \end{array}
  \right.
  \quad t> 0.
\end{eqnarray}

We recall an easy lemma for linear ODE and state a general scaling property for~\eqref{edo-c}.

\begin{lemma}\label{qualit-lin-ode}
Let  $z: [0,\infty)\to \R$ be the solution of the linear ODE
\begin{eqnarray}\label{edo-l}
&& \dot{z}(t)=\alpha(t) + a(t)z(t), \qquad z(0)=z_0,
\end{eqnarray} 
where $\alpha, a :[0,\infty)\to \R$ are continuous functions and $z_0\in\R$.
\begin{enumerate}
\item If $\alpha(t), z(0)=0$ (respectively $\geq 0$), then $z(t)\equiv 0$  (respectively $\geq 0$).    
\item  If $\alpha (t), a(t)\geq 0$, then $z(t)\geq z(0)+\int_0^t \alpha (s)ds$.
\end{enumerate}
\end{lemma}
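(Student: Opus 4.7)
My plan is to treat this as a textbook exercise on scalar linear first-order ODEs and reduce everything to a single application of the Duhamel (variation of constants) formula. Setting $A(t):=\int_0^t a(s)\,ds$, the integrating factor $e^{-A(t)}$ turns the equation into
\[
\frac{d}{dt}\!\left(e^{-A(t)}z(t)\right)=e^{-A(t)}\alpha(t),
\]
and integrating from $0$ to $t$ yields the explicit representation
\[
z(t)=e^{A(t)}z_0+\int_0^t e^{A(t)-A(s)}\alpha(s)\,ds.
\]
Once this formula is in hand both assertions are essentially immediate, so the body of the proof is a short display followed by two lines of sign inspection.

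For part~(1), the two cases are read off directly: if $\alpha\equiv 0$ and $z_0=0$, both summands vanish and $z\equiv 0$; if instead $\alpha(t)\geq 0$ and $z_0\geq 0$, then $e^{A(t)}>0$ and $e^{A(t)-A(s)}>0$ (no sign hypothesis on $a$ is needed), so each term is nonnegative and $z(t)\geq 0$. Uniqueness of the solution of~\eqref{edo-l}, guaranteed by the continuity of $\alpha$ and $a$ and the linearity of the right-hand side, ensures that $z$ obtained from the Duhamel formula is indeed the solution under consideration.

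For part~(2), the nonnegativity of $a$ plays the crucial role: $A$ is nondecreasing, so $A(t)-A(s)\geq 0$ whenever $0\leq s\leq t$, and therefore $e^{A(t)-A(s)}\geq 1$. Since $\alpha(s)\geq 0$, this yields
\[
\int_0^t e^{A(t)-A(s)}\alpha(s)\,ds \;\geq\; \int_0^t \alpha(s)\,ds,
\]
and similarly $e^{A(t)}z_0 \geq z_0$ in the regime where $z_0\geq 0$ (which is the one in which the lemma is applied in the main text, via~(1)); combining these two inequalities gives the claimed lower bound $z(t)\geq z_0+\int_0^t\alpha(s)\,ds$.

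I do not expect any serious obstacle: the whole lemma is a one-line consequence of the explicit representation of $z$, and the only point requiring a moment of care is the sign check at the end of~(2), where one wants to make sure the two lower bounds $e^{A(t)}z_0\geq z_0$ and $e^{A(t)-A(s)}\geq 1$ are applied under the right monotonicity hypothesis on $a$. An alternative approach, avoiding exponentials entirely, would be to set $w(t):=z(t)-z_0-\int_0^t\alpha(s)\,ds$ and observe that $\dot w = a w + a\bigl(z_0+\int_0^s \alpha\bigr)$ with $w(0)=0$; then Gronwall (or part~(1) applied to $w$) gives $w\geq 0$. I would keep the Duhamel route for brevity.
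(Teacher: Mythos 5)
Your proof follows exactly the same route as the paper: write down the Duhamel representation $z(t)=e^{A(t)}z_0+\int_0^t e^{A(t)-A(s)}\alpha(s)\,ds$ with $A(t)=\int_0^t a$, and read off both statements by sign inspection. You are also right to flag the hidden hypothesis in part~(2): as literally stated, the inequality $z(t)\geq z(0)+\int_0^t\alpha$ is false without $z_0\geq 0$ (take $a\equiv 1$, $\alpha\equiv 0$, $z_0=-1$, giving $z(t)=-e^t<z_0$), and it is only because $z(0)\geq 0$ in every application in the paper (proofs of Proposition~\ref{behav-O} and Lemma~\ref{xprimeneg}) that the estimate $e^{A(t)}z_0\geq z_0$ goes in the needed direction.
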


\begin{proof}
All the properties can be read directly on the explicit solution
$z(t)= z(0)e^{A(t)}+ \int_0^t e^{A(t)-A(s)}\alpha (s) ds$, where  
$A(t)=\int_0^t a(s) ds$.
%
\end{proof}

\begin{lemma}[Scaling property]\label{lem-scaling}
  For every $\rho >0$, the
  solution of~\eqref{edo-c} satisfies
\begin{eqnarray}\label{prop-scaling}
&& X^{x,e_\theta,\e}(t)= \frac{1}{\rho} X^{\rho x, e_\theta, \rho^3 \e}(\rho t).
\end{eqnarray} 
\end{lemma}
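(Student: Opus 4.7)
The plan is a direct verification via a change of variables, followed by invoking uniqueness of the Cauchy problem for \eqref{edo-c}. The scaling is dictated by the cubic structure of the penalization term: since the nonlinearity is $\frac{1}{\varepsilon}$ times a degree-$3$ monomial in $(X_1,X_2)$, rescaling positions by $\rho$ multiplies the nonlinear term by $\rho^{3}/\varepsilon$, which is absorbed by replacing $\varepsilon$ with $\rho^{3}\varepsilon$; rescaling time by $\rho$ then matches the left-hand side.

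Concretely, I would fix $\rho>0$ and define
\[
Y(t):=\frac{1}{\rho}\,X^{\rho x,e_\theta,\rho^{3}\varepsilon}(\rho t), \quad t\geq 0.
\]
Lemma~\ref{lem-exis} guarantees that $X^{\rho x,e_\theta,\rho^{3}\varepsilon}$ is the unique global solution of \eqref{edo-c} with parameter $\rho^{3}\varepsilon$ and initial datum $\rho x$, so $Y$ is well-defined on $[0,\infty)$. The initial condition is immediate: $Y(0)=\frac{1}{\rho}(\rho x)=x$.

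For the ODE, writing $Z(s):=X^{\rho x,e_\theta,\rho^{3}\varepsilon}(s)$ so that $Z(\rho t)=\rho Y(t)$, the chain rule gives $\dot Y(t)=\dot Z(\rho t)$. Using that $Z$ satisfies \eqref{edo-c} with $\varepsilon$ replaced by $\rho^3\varepsilon$,
\[
\dot Y_1(t)=\cos\theta-\frac{2}{\rho^{3}\varepsilon}\,Z_1(\rho t)\,Z_2(\rho t)^{2}
=\cos\theta-\frac{2}{\rho^{3}\varepsilon}\,(\rho Y_1(t))(\rho Y_2(t))^{2}
=\cos\theta-\frac{2}{\varepsilon}\,Y_1(t)\,Y_2(t)^{2},
\]
and an identical computation yields $\dot Y_2(t)=\sin\theta-\frac{2}{\varepsilon}\,Y_1(t)^{2}\,Y_2(t)$. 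Hence $Y$ solves the system \eqref{edo-c} with parameter $\varepsilon$ and initial datum $x$. By the uniqueness part of Lemma~\ref{lem-exis}, $Y\equiv X^{x,e_\theta,\varepsilon}$, which is exactly \eqref{prop-scaling}.

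There is no genuine obstacle here: the only thing to watch is bookkeeping of the factors of $\rho$ under the substitution $s=\rho t$ and the cubic rescaling $X_i\mapsto \rho X_i$. I would emphasize in the write-up that the specific exponent $3$ in $\rho^{3}\varepsilon$ is forced by the degree of $\nabla d$ (which is degree $3$ in its entries for the choice \eqref{choix-d}), so this scaling identity is really a manifestation of the $3$-homogeneity of the vector field $\nabla d$ together with the $1$-homogeneity of $f(x,e_\theta)=e_\theta$ in the $x$-variable.
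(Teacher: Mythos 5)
Your proof is correct and is essentially the paper's argument: both define $Y(t)=\rho^{-1}X^{\rho x,e_\theta,\rho^3\varepsilon}(\rho t)$, check that $Y(0)=x$ and that $Y$ satisfies \eqref{edo-c} with parameter $\varepsilon$ (you do this component-wise, the paper uses the $3$-homogeneity of $\nabla d$ directly), and conclude by uniqueness. The remark on why the exponent $3$ arises is a nice touch but the substance is identical.
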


\begin{proof}
Set $Y(t)= \frac{1}{\rho} X^{\rho x, e_\theta, \rho^3 \e}(\rho t)$. Then, since $\nabla d(x)= 2 x_1 x_2 (x_2, x_1)$,
\begin{eqnarray*}
  && \dot{Y}(t)= \dot{X}^{\rho x, e_\theta, \rho^3 \e}(\rho t) = e_\theta -\frac{1}{\rho^3\e}\nabla d(X^{\rho x, e_\theta, \rho^3 \e}(\rho t))
  =  e_\theta -\frac{1}{\e}\nabla d(Y(t)).
\end{eqnarray*} 
Taking into account the fact that $Y(0)=x$, by uniqueness of the solution of~\eqref{edo-c}, we obtain that $Y=X^{x,e_\theta,\e}$.
\end{proof}

\subsection{Proof of Proposition~\ref{behav-O}}

We start with the proof of (1). We recall that $X^\e(0)=O$ and, by symmetry, we may assume, for instance, that
$$
\max_i \langle e_\theta , e_i\rangle = \langle e_\theta , e_E\rangle = \cos\theta > \sin\theta >0. 
$$ 
We first infer, from~\eqref{edo-c},
that $z:= X_1^\varepsilon$ (respectively $z:=X_2^\varepsilon$)
is the solution to the linear ODE~\eqref{edo-l} with $\alpha = \cos\theta >0$
(respectively $\alpha = \sin\theta >0$)
and $a(t)= -2\varepsilon^{-1} (X_2^\varepsilon(t))^2$  (respectively $a(t)=-2\varepsilon^{-1} (X_1^\varepsilon(t))^2$) satisfying $z(0)=0$. Then, 
$
X_1^\varepsilon(t)=(\cos\theta)\int_0^t e^{A(t)-A(s)}ds\geq0$ (respectively $X_2^\varepsilon(t)=(\sin\theta)\int_0^t e^{B(t)-B(s)} ds\geq0)$, where $A(t):=-2\varepsilon^{-1} \int_0^t (X_2^\varepsilon(s))^2 ds$ (respectively $B(t):=-2\varepsilon^{-1} \int_0^t (X_1^\varepsilon(s))^2 ds$). 

Moreover, in~\eqref{edo-c}, subtracting the second line from the first one shows that~$z:=X_1^\varepsilon- X_2^\varepsilon$ is the solution to the linear ODE~\eqref{edo-l} with $\alpha = \cos\theta - \sin\theta >0$
and $a(t)= 2\varepsilon^{-1} X_1^\varepsilon(t) X_2^\varepsilon(t)\geq0$, satisfying $z(0)=0$. Then, 
$
X_1^\varepsilon(t)- X_2^\varepsilon(t)=(\cos\theta - \sin\theta)\int_0^t e^{C(t)-C(s)}ds\geq (\cos\theta - \sin\theta)t $, where $C(t):=2\varepsilon^{-1} \int_0^t X_1^\varepsilon(s)X_2^\varepsilon(s) ds$.

 From Theorem~\ref{cv-traj}, letting $\varepsilon\to 0$,
we obtain that $X_1(t)- X_2(t)\geq  (\cos\theta - \sin\theta)t$, $X_1(t), X_2(t)\geq 0$ and $d(X(t))=0$. It follows that, 
necessarily $X(t)\in (0,\infty)e_E$ for all~$t>0$.
From Theorem~\ref{edo-interieur}, we get the desired result.

We turn to the proof of (2), assuming, for instance, that $\theta= \frac{\pi}{4}$.
From~\eqref{edo-c}, we obtain that $z=X_1^\varepsilon- X_2^\varepsilon$ is the solution
of  the linear ODE~\eqref{edo-l} with $\alpha=0$. By Lemma~\ref{qualit-lin-ode},
we obtain $X_1^\varepsilon= X_2^\varepsilon\geq 0$. It follows that
the first line of~\eqref{edo-c} reads
$\dot{X}_1^\varepsilon + 2\varepsilon^{-1} (X_1^\varepsilon)^3 = \cos\theta$ with
$X_1^\varepsilon(0)=0$. We can solve explicitly this ODE to obtain that
$X_1^\varepsilon(t)$ converges to the equilibrium $s^\varepsilon:= (\cos\theta /2)^{1/3} \varepsilon^{1/3}$
as $t\to \infty$. Since $s^\varepsilon\to 0$ as $\varepsilon\to 0$,
we get, at the limit, that $X_1(t)\equiv 0$ and then $X(t)\equiv O$ as
desired.
\qed

\subsection{Proof of Proposition~\ref{behav-1}}
\
\medskip

From Theorem~\ref{edo-interieur}, we have
\begin{eqnarray}\label{formeN}
X^{e_N, e_\theta}(t)=(1+(\sin\theta)t)e_N \quad  \text{for all $t\in [0,\underline{t}]$,}
\end{eqnarray}
with $\underline{t}=\inf\{s\geq 0: X^{e_N, e_\theta}(s)=O\}$.
In the case of part (1) ($\sin\theta \geq 0$),~\eqref{formeN} holds for all $t\geq 0$
since  $\underline{t}=\infty$.
In the case of parts (2) and (3),~\eqref{formeN} holds for all $t\in [0, (-\sin\theta)^{-1}]$.
Part (4) follows from (2) by symmetry.
\medskip

We now concentrate on the proof of (2),(3) for $t\geq  (-\sin\theta)^{-1}$ (after the trajectory hit $O$).
We recall that, in all the proof, we assume
\begin{eqnarray*}
  && \alpha\equiv e_\theta \quad \text{with $\theta\in (\pi, \frac{5\pi}{4}]$ (case (2))
  or $\theta\in (\frac{5\pi}{4}, \frac{3\pi}{2}]$ (case (3))}.
\end{eqnarray*}
The keystone to obtain the result is to prove that, for $t>(-\sin\theta)^{-1}$,
$X_1^{e_N, e_\theta, \e}(t)\leq -\eta$ (respectively $X_2^{e_N, e_\theta, \e}(t)\leq -\eta$)
for some $\eta >0$ independent of $\e$. This proves
that the trajectory necessarily enters the branch $W$ (respectively the branch $S$).

The proof consists in a succession of
lemmas.

\begin{lemma}\label{tepsilon}
Let $\theta\in (\pi, \frac{3\pi}{2}]$.
There exists $t_\e \in (0,  (-\sin\theta)^{-1}]$ such that
\begin{eqnarray*}
  &&   \eta_\e := X_1^{e_N, e_\theta, \e}(t_\e) \leq 0
  \quad \text{and} \quad X_2^{e_N, e_\theta, \e}(t_\e)=0,
\end{eqnarray*}
and, for all $t\geq t_\e$,  $X^{e_N, e_\theta, \e}(t)\in \{ x_1 \leq 0\}\cap \{ x_2 \leq 0\}$. 
\end{lemma}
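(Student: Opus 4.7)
The plan is to exploit the linear structure of each equation in \eqref{edo-c} when viewed as an ODE in a single component, and apply Lemma~\ref{qualit-lin-ode} to pin down the signs of $X_1^\varepsilon$ and $X_2^\varepsilon$ independently. Throughout, $\theta\in(\pi,3\pi/2]$ gives $\cos\theta\leq 0$ and $\sin\theta<0$.

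First I would establish that $X_1^\varepsilon(t)\leq 0$ for \emph{all} $t\geq 0$. Indeed, the first line of \eqref{edo-c} reads $\dot z=\alpha+a(t)z$ with $z:=X_1^\varepsilon$, $\alpha=\cos\theta\leq 0$, and $a(t)=-\frac{2}{\varepsilon}(X_2^\varepsilon(t))^2\leq 0$, starting from $z(0)=0$. Applying Lemma~\ref{qualit-lin-ode}(1) to $-X_1^\varepsilon$ (equivalently, reading off the explicit representation $X_1^\varepsilon(t)=\cos\theta\int_0^t e^{A(t)-A(s)}ds$) gives the claim.

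Next, I would locate $t_\varepsilon$. As long as $X_2^\varepsilon\geq 0$, the second line of \eqref{edo-c} yields
\begin{equation*}
\dot X_2^\varepsilon=\sin\theta-\tfrac{2}{\varepsilon}(X_1^\varepsilon)^2 X_2^\varepsilon\leq\sin\theta<0,
\end{equation*}
so $X_2^\varepsilon(t)\leq 1+(\sin\theta)t$. Hence $X_2^\varepsilon$ must vanish at some first time $t_\varepsilon\in(0,(-\sin\theta)^{-1}]$, and Step~1 forces $\eta_\varepsilon:=X_1^\varepsilon(t_\varepsilon)\leq 0$.

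Finally, to show $X^\varepsilon(t)\in\{x_1\leq 0\}\cap\{x_2\leq 0\}$ for $t\geq t_\varepsilon$, the first inclusion is immediate from Step~1, and for the second I would introduce $Y(s):=-X_2^\varepsilon(t_\varepsilon+s)$, which solves the linear ODE
\begin{equation*}
\dot Y(s)=-\sin\theta-\tfrac{2}{\varepsilon}\bigl(X_1^\varepsilon(t_\varepsilon+s)\bigr)^2 Y(s),\qquad Y(0)=0,
\end{equation*}
with source term $-\sin\theta>0$. Lemma~\ref{qualit-lin-ode}(1) (positive version) directly gives $Y(s)\geq 0$, i.e.\ $X_2^\varepsilon(t)\leq 0$ for all $t\geq t_\varepsilon$. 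There is no real obstacle here: the only point worth noting is that the coefficient multiplying the linear term in each equation is \emph{non-positive}, so the sign of the forcing term $\sin\theta$ or $\cos\theta$ is inherited by the solution, which is exactly what Lemma~\ref{qualit-lin-ode} delivers.
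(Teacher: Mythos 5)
Your proof is correct, and it is a mild but genuine variant of the paper's argument. The paper establishes Steps~1 and~3 geometrically, via the invariance criterion~\eqref{suff-inv}: it observes that on $\partial Q_W=\{x_1=0\}\subset\Gamma$ the penalization term vanishes ($\nabla d=0$), so $\langle F^\e, e_E\rangle=\cos\theta\leq 0$, and likewise for $Q_S=\{x_2\leq 0\}$ with $\langle F^\e, e_N\rangle=\sin\theta<0$. You instead read off the sign of each component directly from the explicit solution formula for the corresponding linear ODE via Lemma~\ref{qualit-lin-ode}\,(1). The two routes are analytically equivalent here, but yours has two small advantages: it does not invoke the invariance machinery at all, and it handles the endpoint $\theta=\tfrac{3\pi}{2}$ cleanly (where $\cos\theta=0$, so the strict inequality $\cos\theta<0$ written in the paper's proof fails; your argument only needs $\cos\theta\leq 0$, and in that degenerate case Lemma~\ref{qualit-lin-ode}\,(1) even gives $X_1^\e\equiv 0$). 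The localization of $t_\e$ in Step~2 is identical to the paper's.
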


\begin{proof}
Define $t_\e =\inf \{ t\geq 0 : X_2^\e(t) =0 \}$. Since  $X_2^\e(0)=1$,
$0<t_\e \leq \infty$. For all $t\in [0, t_\e)$,
\begin{eqnarray}\label{x2neg}
\dot{X}_2^\e (t)= \sin\theta -\frac{2}{\e} (X_1^\e(t))^2 X_2^\e(t) \leq \sin\theta < 0,
\end{eqnarray}  
thus, by integration, $X_2^\e(t)\leq 1+(\sin\theta)t$.
Since $\sin\theta <0$, it follows that $t_\e\leq -(\sin\theta)^{-1}$.

Next, we prove that $Q_W:= \{ x_1\leq 0\}$ is invariant for~\eqref{traj-eps}, for all $t\geq 0$.
Indeed, for all $x\in \partial Q_W$, $\nabla d(x)=0$ and $n_{\partial Q_W} (x)=e_E$,
leading to $\langle F^\e (x,e_\theta) , n_{\partial Q_W} (x)\rangle = \langle  e_\theta, e_E\rangle =\cos\theta <0$.
As a consequence, we obtain that $\eta_\e :=X_1^\e (t_\e)\leq 0$.

Similarly,  $Q_S:= \{ x_2\leq 0\}$ is invariant for  $X^{\e}$ for $t\geq t_\e$.
\end{proof}

\begin{lemma}\label{etaepsilon}
Let $\theta\in (\pi, \frac{3\pi}{2}]$.
There exists $\gamma >0$ (independent of $\e$) such that
\begin{eqnarray}\label{ineq-etae}
  && - \frac{3}{2} \frac{(-\cos\theta)}{(-\sin\theta)^{2/3}}  \e^{1/3}\leq   \eta_\e = X_1^{e_N, e_\theta, \e}(t_\e) \leq - \gamma \e^{1/3},
\end{eqnarray}
where we recall that $t_\e, \eta_\e$ are introduced in Lemma~\ref{tepsilon}.
\end{lemma}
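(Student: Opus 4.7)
The strategy combines the scaling Lemma~\ref{lem-scaling} (applied with $\rho = \e^{-1/3}$), which identifies $\e^{1/3}$ as the natural scale of the problem, with the two integral identities obtained by integrating each line of~\eqref{edo-c} on $[0,t_\e]$:
\begin{align*}
\eta_\e &= (\cos\theta)\,t_\e - \frac{2}{\e}\int_0^{t_\e} X_1^\e(X_2^\e)^2\,dt,\\
-1 &= (\sin\theta)\,t_\e - \frac{2}{\e}\int_0^{t_\e} (X_1^\e)^2 X_2^\e\,dt.
\end{align*}
Both integrands are non-positive on $[0,t_\e]$ by Lemma~\ref{tepsilon}, so the second identity rewrites as $\frac{2}{\e}\int_0^{t_\e}(X_1^\e)^2 X_2^\e\,dt = 1 + (\sin\theta)t_\e \in [0,1]$, giving a tight control on the $L^2$-mass of $X_1^\e$ weighted by $X_2^\e$.

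A formal matched-asymptotic analysis of~\eqref{edo-c} reveals the expected two-scale structure. In the outer region $\{X_2^\e \gg \e^{1/3}\}$, the fast variable $X_1^\e$ tracks its quasi-equilibrium $X_1^\e \simeq \e\cos\theta/(2(X_2^\e)^2)$, which is $o(\e^{1/3})$, while $X_2^\e(t) \simeq 1 + (\sin\theta)t$; in an inner boundary layer of time-width $O(\e^{1/3})$ near $t_\e$, both $|X_1^\e|$ and $X_2^\e$ are of order $\e^{1/3}$, and the dynamics is governed, via Lemma~\ref{lem-scaling}, by the autonomous planar system~\eqref{edo-c} with $\e=1$ starting from $(0,\e^{-1/3})$. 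The outer quasi-equilibrium and the inner scaling agree precisely at the transition $X_2^\e \sim \e^{1/3}$, where $|X_1^\e| \sim \e/(X_2^\e)^2 \sim \e^{1/3}$.

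For the lower bound $\eta_\e \geq -\frac{3}{2}(-\cos\theta)(-\sin\theta)^{-2/3}\e^{1/3}$ (upper bound on $|\eta_\e|$), I would estimate $|X_1^\e|(X_2^\e)^2$ pointwise by its quasi-equilibrium value $\e(-\cos\theta)/2$ throughout $[0,t_\e]$, substitute into the first integral identity together with $X_2^\e(t) \leq 1 + (\sin\theta)t$ (which follows from $\dot{X}_2^\e \leq \sin\theta$) and $t_\e \leq (-\sin\theta)^{-1}$, and use the second identity to control the deviation from quasi-equilibrium; the explicit integration produces the factor $(-\sin\theta)^{-2/3}$ through the refined asymptotic $t_\e = (-\sin\theta)^{-1} - O(\e^{1/3})$, matched with the boundary-layer width. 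For the upper bound $\eta_\e \leq -\gamma\e^{1/3}$ (lower bound on $|\eta_\e|$), I would analyze the autonomous inner system at the matching $X_2^\e \sim \e^{1/3}$, where the quasi-equilibrium already forces $|X_1^\e| \sim \e^{1/3}$, and propagate this lower bound down to $t=t_\e$ via a Gr\"onwall-type estimate applied to~\eqref{edo-c} on the boundary-layer interval.

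The main obstacle is to justify this two-scale description with quantitative constants sharp enough to recover, in particular, the numerical factor $\frac{3}{2}$: the matching at $X_2^\e \sim \e^{1/3}$ must be made rigorous, which cannot be achieved by explicit integration of~\eqref{edo-c} since no first integrals are available. The scaling Lemma~\ref{lem-scaling} is essential here, as it reduces the boundary-layer problem to the asymptotic analysis, as $R \to \infty$, of a single $\e$-independent autonomous planar system starting from $(0,R)$ and crossing $\{V=0\}$; the sharp constants then come from expanding the outer contribution to one order beyond the (cancelling) leading order of the first identity.
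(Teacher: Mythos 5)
Your matched-asymptotics intuition correctly identifies the $\e^{1/3}$ scale and the role of the slow region, but the proposed implementation has two genuine gaps. The first integral identity, $\eta_\e = (\cos\theta)\,t_\e - \frac{2}{\e}\int_0^{t_\e}X_1^\e(X_2^\e)^2\,dt$, is \emph{tautological}: rearranged it reads $\frac{2}{\e}\int_0^{t_\e}|X_1^\e|(X_2^\e)^2\,dt = \eta_\e - (\cos\theta)\,t_\e$, which is just the integrated form of the first line of~\eqref{edo-c} and carries no independent information about $\eta_\e$. The quasi-equilibrium bound $|X_1^\e|(X_2^\e)^2\leq\e(-\cos\theta)/2$ (which does hold on $[0,t_\e]$, by invariance of the region $\mathcal{C}_1^{\e,1}$ of Lemma~\ref{xprimeneg} as long as $X_2^\e>0$) supplies only an \emph{upper} bound on that integral, which combined with the identity yields only the trivial $\eta_\e\leq 0$; the lower bound $\eta_\e\geq -C\e^{1/3}$ would require a \emph{lower} bound on the integral, which your plan does not supply. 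Symmetrically, for $\eta_\e\leq-\gamma\e^{1/3}$ you assert the quasi-equilibrium at the matching ``forces $|X_1^\e|\sim\e^{1/3}$''; it forces only $|X_1^\e|\lesssim\e^{1/3}$, a one-sided estimate consistent with $X_1^\e\equiv 0$, and a genuine lower bound on $|X_1^\e(t_\e)|$ must come from the actual evolution, not from the slow-manifold constraint.

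The paper's proof avoids asymptotic matching with two short, explicit comparison-ODE arguments. For the lower bound, set $z:=X_1^\e+A\e^{1/3}$: then $z$ solves the linear ODE~\eqref{edo-l} with source $\cos\theta+\frac{2A}{\e^{2/3}}(X_2^\e)^2$, which stays nonnegative precisely while $X_2^\e\geq\sqrt{(-\cos\theta)/(2A)}\,\e^{1/3}$; by Lemma~\ref{qualit-lin-ode} this gives $X_1^\e\geq-A\e^{1/3}$ up to the crossing time $\tau_\e$, the residual interval to $t_\e$ has length at most $\frac{1}{-\sin\theta}\sqrt{(-\cos\theta)/(2A)}\,\e^{1/3}$ (from $\dot X_2^\e\leq\sin\theta$), and there $\dot X_1^\e\geq\cos\theta$ since $X_1^\e\leq 0$. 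Optimizing over $A$ yields exactly $\tfrac32(-\cos\theta)(-\sin\theta)^{-2/3}$, with no asymptotic expansion needed. For the upper bound, the bound on $X_2^\e$ on $[\tau_\e,t_\e]$ produces the linear differential inequality $\dot X_1^\e\leq\cos\theta+\frac{\cos\theta}{\e^{1/3}\overline A}X_1^\e$, which integrates explicitly and, combined with a reverse estimate $t_\e-\tau_\e\gtrsim\e^{1/3}$ coming from an $\e$-independent lower bound on $\dot X_2^\e$, gives $\eta_\e\leq-\gamma\e^{1/3}$. So the approach in the paper is genuinely different: it replaces the two-scale matching, whose rigorous justification you yourself flag as the obstacle, with elementary comparison arguments that give the sharp constants directly.
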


\begin{proof}
We start by proving the first inequality. Let $A>0$. Then, $z(t)=X_1^{\e}(t)+ A \e^{1/3}$ is solution
to the linear ODE~\eqref{edo-l} with $\alpha(t)= \cos\theta + \frac{2 A}{\e^{2/3}} (X_2^{\e}(t))^2$,
$a(t)= -\frac{2}{\e}(X_2^{\e}(t))^2$ and $z(0)= A \e^{1/3} \geq 0$. One has $z(t)=A \e^{1/3}e^{A(t)}+\int_0^t (\cos\theta + \frac{2 A}{\e^{2/3}} (X_2^{\e}(t))^2)e^{A(t)-A(s)}ds$, where $A(t):=-2\varepsilon^{-1} \int_0^t (X_2^\varepsilon(s))^2 ds$, for all $t\geq0$. 

The function  $\alpha(t)= \cos\theta + \frac{2 A}{\e^{2/3}} (X_2^{\e}(t))^2$
is nonnegative as soon as
\begin{eqnarray}\label{x2grand}
  && X_2^{\e}(t) \geq \sqrt{\frac{-\cos\theta}{2A}}\e^{1/3}.
\end{eqnarray}
By \eqref{x2neg}, $X_2^\e$ is decreasing on $[0, t_\e]$ with $X_2^\e(0)=1$ and $X_2^\e (t_\e)= 0$.
Therefore, for~$\varepsilon>0$ small enough so that $0<\sqrt{\frac{-\cos\theta}{2A}}\e^{1/3}<1$, there exists $0<\tau_\e < t_\e$ such that~$X_2^\e (\tau_\e)=\sqrt{\frac{-\cos\theta}{2A}}\e^{1/3}$
and~\eqref{x2grand} holds for $0\leq t\leq \tau_\e$, which implies that $z\geq 0$ on~$[0,\tau_\e]$, thus
$X_1^\e (\tau_\e)\geq -A \e^{1/3}$.

Integrating~\eqref{x2neg}, we obtain $0\leq X_2^{\e}(t)\leq X_2^\e (\tau_\e) +(\sin\theta)(t -\tau_\e)$
for $t\in [\tau_\e, t_\e]$,
leading to
\begin{eqnarray*}
  &&  t -\tau_\e \leq \frac{1}{- \sin\theta}\sqrt{\frac{-\cos\theta}{2A}}\e^{1/3}.
\end{eqnarray*}
By Lemma~\ref{tepsilon}, $X_1^{\e}\leq 0$ on $[0,t_\e]$. It follows that
$\dot{X}_1^\e = \cos\theta -\frac{2}{\e}  X_1^{\e}(X_2^{\e})^2\geq  \cos\theta$
and, by integration,
\begin{eqnarray}\label{borne-xx}
  && X_1^{\e}(t) \geq X_1^{\e}(\tau_\e) + (\cos\theta) ( t_\e -\tau_\e) \geq
   -A \e^{1/3} - \frac{\cos\theta}{\sin\theta}\sqrt{\frac{-\cos\theta}{2A}}\e^{1/3} =: - g(A) \e^{1/3},
\end{eqnarray}
for all $t\in [\tau_\e, t_\e]$.
A tedious computation yields $\min_{A>0}g(A)= g(\overline{A})= \frac{3}{2} \frac{(-\cos\theta)}{(-\sin\theta)^{2/3}}$
with $\overline{A}= -\frac{\cos\theta}{2(-\sin\theta)^{2/3}}$. Therefore,
maximizing the right-hand side of~\eqref{borne-xx} with respect to $A>0$, we obtain
the first inequality in~\eqref{ineq-etae}.

We turn to the proof of the second inequality in~\eqref{ineq-etae}.
From the previous step, we have
\begin{eqnarray}\label{borne-yy}
&& 0\leq
X_2^{\e}(t) \leq \sqrt{\frac{-\cos\theta}{2\overline{A}}}\e^{1/3} \qquad  \text{on $[\tau_\e,  t_\e]$,}
\end{eqnarray}
which yields
\begin{eqnarray}\label{ODI-1} 
  &&   \dot{X}_1^\e = \cos\theta -\frac{2}{\e} X_1^\e (X_2^\e)^2 \leq \cos\theta +\frac{\cos\theta}{\e^{1/3}\overline{A}} X_1^\e.
\end{eqnarray}
Integrating~\eqref{ODI-1} on $[\tau_\e,  t_\e]$, we obtain
\begin{eqnarray}\label{estim-eta}
  &&   \eta_\e = X_1^\e(t_\e) = \leq -\overline{A}  \left(1- e^{ \frac{\cos\theta}{\e^{1/3}\overline{A}}  (t_\e -\tau_\e)}\right) \e^{1/3}.
\end{eqnarray}

We now derive a lower bound for $t_\e -\tau_\e$. 
Using~\eqref{borne-xx} and~\eqref{borne-yy}, we get
\begin{eqnarray*}
  &&   \dot{X}_2^\e = \sin\theta -\frac{2}{\e} (X_1^\e)^2 X_2^\e
  \geq \sin\theta -2 \sqrt{\frac{-\cos\theta}{2\overline{A}}} g(\overline{A})^2 =: -C
\quad \text{on $[\tau_\e,  t_\e]$,}
\end{eqnarray*}
for some constant $C>0$ independent of $\e$. It follows that
$X_2^\e(t_\e)-X_2^\e(\tau_\e)\geq -C(t_\e -\tau_\e)$, from which we infer
\begin{eqnarray*}
  &&  t_\e -\tau_\e \geq \frac{1}{C} X_2^\e(\tau_\e)= \frac{1}{C}\sqrt{\frac{-\cos\theta}{2\overline{A}}}\e^{1/3}.
\end{eqnarray*}
Taking into account this estimate in~\eqref{estim-eta}, we obtain the second inequality in~\eqref{ineq-etae}
for some constant $\gamma >0$ depending only on $\theta$.
\end{proof}


\begin{lemma}\label{xprimeneg}
Let  $\theta\in (\pi, \frac{5\pi}{4}]$ (case (2)). Define
\begin{eqnarray*}
   && \mathcal{C}_1^{\e ,1}:=\left\{ x=(x_1,x_2)\in\R^2:  F_1^\e(x, e_\theta) = \cos\theta -\frac{2}{\e}x_1x_2^2 \leq 0\right\}.
\end{eqnarray*}
Then, $\mathcal{C}_1^{\e ,1}\cap \{ x_1 -\eta_\e \leq x_2\}\cap \{x_1\leq 0\}$ is invariant for~\eqref{traj-eps}.
In particular, for any $x\in \mathcal{C}_1^{\e ,1}$, $\dot{X}_1^\e(t)\leq 0$ for all $t\geq 0$.
\end{lemma}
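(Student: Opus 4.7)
The plan is to apply the sufficient invariance criterion~\eqref{suff-inv} separately on each smooth piece of the boundary of
$$
S:=\mathcal{C}_1^{\e,1}\cap\{x_1-\eta_\e\leq x_2\}\cap\{x_1\leq 0\},
$$
which decomposes (away from corners) as $\partial S=B_1\cup B_2\cup B_3$ with $B_1\subset\{F_1^\e(\cdot,e_\theta)=0\}$, $B_2\subset\{x_2=x_1-\eta_\e\}$ and $B_3\subset\{x_1=0\}$. The ``in particular'' clause is then immediate: once $S$ is shown to be invariant, any starting point $x\in S$ gives $X^{x,e_\theta,\e}(t)\in S\subset\mathcal{C}_1^{\e,1}$ for all $t\geq 0$, which is precisely the inequality $\dot X_1^\e(t)\leq 0$.

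I would first dispose of the two easier pieces. On $B_3$, since $\nabla d$ vanishes identically along $\{x_1=0\}$, the vector field reduces to $F^\e(x,e_\theta)=e_\theta$, and the outward normal is $e_E$, giving $\langle e_\theta,e_E\rangle=\cos\theta<0$, exactly as in the proof of Lemma~\ref{tepsilon}. On $B_2$, the outward normal is proportional to $(1,-1)$ and, using $x_2-x_1=-\eta_\e$ on this surface, the components of $F^\e$ from~\eqref{edo-c} satisfy
$$
F_1^\e-F_2^\e=(\cos\theta-\sin\theta)+\frac{2\eta_\e}{\e}\,x_1x_2.
$$
Since $\cos\theta\leq\sin\theta\leq 0$ throughout $(\pi,5\pi/4]$, the first term is $\leq 0$; for the second, one observes that on the relevant portion of $B_2$, namely where the trajectory can actually sit once $t\geq t_\e$, Lemma~\ref{tepsilon} enforces $x_2\leq 0$, and combined with $x_1\leq 0$ this yields $x_1x_2\geq 0$, hence $\eta_\e\,x_1x_2\leq 0$ as required.

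The main obstacle is $B_1$, where $F_1^\e=0$ so the trajectory is instantaneously tangent to $\mathcal{C}_1^{\e,1}$. Differentiating once along the flow and substituting~\eqref{edo-c} gives, on $B_1$,
$$
\ddot X_1^\e=-\frac{4}{\e}\,X_1^\e X_2^\e\,\dot X_2^\e,
$$
and the defining identity $X_1^\e(X_2^\e)^2=\tfrac{\e\cos\theta}{2}<0$ forces $X_1^\e<0$. I would then split on the sign of $X_2^\e$. When $X_2^\e\geq 0$ the formula $\dot X_2^\e=\sin\theta-\tfrac{2}{\e}(X_1^\e)^2 X_2^\e$ is a sum of two nonpositive terms, so $X_1^\e X_2^\e\,\dot X_2^\e\geq 0$ and $\ddot X_1^\e\leq 0$. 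The delicate case is $X_2^\e<0$, where one must instead show $\dot X_2^\e\geq 0$; using the $B_1$-identity $(X_2^\e)^2=\tfrac{\e|\cos\theta|}{2|X_1^\e|}$ this reduces to the quantitative inequality
$$
|X_1^\e|^{3/2}\sqrt{2|\cos\theta|/\e}\geq|\sin\theta|,
$$
which is ensured by the lower bound $|X_1^\e|\geq|\eta_\e|\geq\gamma\e^{1/3}$ of Lemma~\ref{etaepsilon}, provided the constant $\gamma$ is large enough. This quantitative threshold is precisely the content of Lemma~\ref{etaepsilon} and is the step where the specific $\eta_\e$-translate in the definition of $S$ plays its role; this is where I anticipate the proof will demand the most care.
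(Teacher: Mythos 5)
Your overall structure (decompose $\partial S$ into the hyperbola, the line $\{x_2=x_1-\eta_\e\}$, and $\{x_1=0\}$, then apply~\eqref{suff-inv} to each piece) matches the paper, and the reformulation on the hyperbola as $\ddot X_1^\e=-\tfrac{4}{\e}X_1^\e X_2^\e\dot X_2^\e\leq 0$ is equivalent to $\langle F^\e,n\rangle\leq 0$ there since $F_1^\e=0$ on that piece. Your handling of $\{x_1=0\}$, of the line $\{x_2=x_1-\eta_\e\}$, and of the upper part $\{x_2\geq 0\}$ of the hyperbola are all correct. The gap is in the last case, the lower hyperbola branch with $X_2^\e<0$. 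You reduce $\dot X_2^\e\geq 0$ to $|X_1^\e|^{3/2}\sqrt{2|\cos\theta|/\e}\geq|\sin\theta|$ and invoke $|X_1^\e|\geq|\eta_\e|\geq\gamma\e^{1/3}$ from Lemma~\ref{etaepsilon}, adding ``provided the constant $\gamma$ is large enough.'' But $\gamma$ is \emph{not} a free parameter: Lemma~\ref{etaepsilon} merely asserts the existence of \emph{some} $\gamma>0$, and its proof produces $\gamma=\overline A(1-e^{-\delta})<\overline A$ for a fixed $\delta$. Your inequality requires $\gamma\geq\bigl(\sin^2\theta/(2|\cos\theta|)\bigr)^{1/3}$; at $\theta=5\pi/4$ this is $2^{-1/2}\approx 0.707$, whereas $\overline A=2^{-7/6}\approx 0.445$, so the bound you need is strictly \emph{larger} than what the lemma can possibly give. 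The step therefore does not close.

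The paper circumvents this by never isolating $F_2^\e$ quantitatively. Instead it uses the algebraic identity
\[
F_2^\e(x)-F_1^\e(x)=(\sin\theta-\cos\theta)+\tfrac{2}{\e}\,x_1x_2\,(x_2-x_1),
\]
which is nonnegative on the relevant arc because $\sin\theta-\cos\theta\geq 0$ for $\theta\in(\pi,5\pi/4]$, $x_1x_2\geq 0$ (both coordinates nonpositive), and $x_2-x_1\geq-\eta_\e\geq 0$ thanks to the constraint $x_1-\eta_\e\leq x_2$ built into $S$. Since $F_1^\e=0$ on the hyperbola this yields $F_2^\e\geq 0$ directly, with no scale-dependent estimate. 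This is the structural role of the $\eta_\e$-translate in the definition of $S$: it turns the sign of $F_2^\e-F_1^\e$ into a purely qualitative statement. To repair your argument, replace the quantitative reduction in the $X_2^\e<0$ case by this identity.
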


\begin{remark}
Note that $\mathcal{C}_1^{\e ,1}$ is defined so that
$X^\e(t)\in \mathcal{C}_1^{\e ,1}$ if and only if $\dot{X}_1^\e(t)\leq 0$
with  $\dot{X}_1^\e(t)= 0$ on $\partial\mathcal{C}_1^{\e ,1}$.
Unfortunately, Lemma~\ref{xprimeneg} is not sufficient to prove
Proposition~\ref{behav-1}. Indeed, the only easy consequence is
$X_1^\e(t)\leq \eta (t,\e) <0$ for all $t\geq t_\e$, but we may have
$\eta (t,\e)\to 0$ as $\e\to 0$. To overcome this difficulty,
we will need to refine the subset $\mathcal{C}_1^{\e ,1}$ above.
\end{remark}

\begin{figure}[ht]
  \begin{center}
\includegraphics[width=7cm]{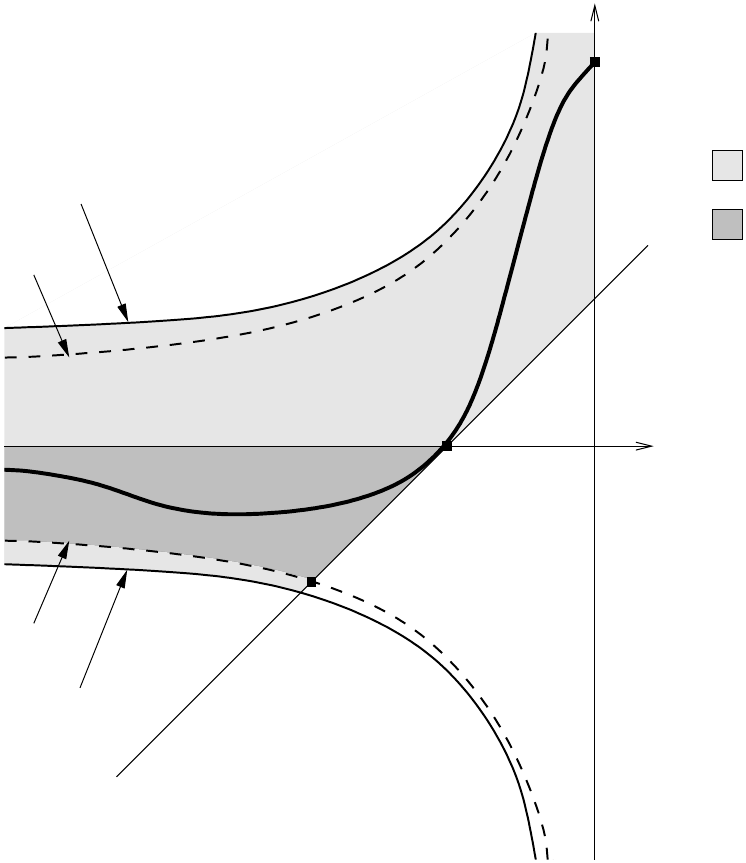}
\caption{}
\label{dess_traj-zones}
\unitlength=1pt
\begin{picture}(0,0)
\put(62,152){$\scriptstyle O$}
\put(63,251){$e_N=(0,1)$}
\put(58,271){$\scriptstyle x_2$}
\put(78,147){$\scriptstyle x_1$}
\put(20,140){$(\eta_\e, \!0)$}
\put(58,186){$\cdot\hspace{.1cm}(0,-\eta_\e)$}
\put(-8,113){$x^{\e, \nu}$}
\put(105,206){$\mathcal{C}_1^{\e ,\nu}\scriptstyle \cap \{ x_1 - \eta_\e\leq x_2\}\cap \{ x_2\leq 0\}$}
\put(105,221){$\mathcal{C}_1^{\e ,1}\scriptstyle \cap \{ x_1 -\eta_\e \leq x_2\}\cap \{x_1\leq 0\}$}
\put(-12,168){$X^{e_N, e_\theta, \e}$}
\put(-125,227){$\scriptstyle  \displaystyle x_2=\sqrt{\frac{\e \cos\theta}{2 x_1}}$}
\put(-165,195){$\scriptstyle \displaystyle x_2=\nu \sqrt{\frac{\e \cos\theta}{2 x_1}}$}
\put(-159,65){$\scriptstyle \displaystyle x_2=-\sqrt{\frac{\e \cos\theta}{2 x_1}}$}
\put(-176,95){$\scriptstyle \displaystyle x_2=-\nu \sqrt{\frac{\e \cos\theta}{2 x_1}}$}
\put(-60,60){$\scriptstyle \displaystyle x_2= x_1-\eta_\e$}
\end{picture}
\end{center}
\end{figure}

\begin{proof} Notice first that
\begin{eqnarray*}
  && \mathcal{C}_1^{\e ,1}:=\left\{ (x_1,x_2)\in\R^2:
  -\sqrt{\frac{\e \cos\theta}{2x_1}} \leq x_2\leq \sqrt{\frac{\e \cos\theta}{2x_1}}, x_1 <0\right\}
  \cup\left\{ (0,x_2), x_2\in\R\right\}
\end{eqnarray*}
is in between two hyperbola branches in $\{x_1 <0\}$ (we refer the reader to Figure~\ref{dess_traj-zones}
for an illustration of the proof).

Let $x$ be a point of the upper-branch, \emph{i.e.},
$$
x\in \{ x_2= \sqrt{\frac{\e \cos\theta}{2x_1}}, x_1 <0 \}= \partial\mathcal{C}_1^{\e ,1}\cap \{x_2>0\} \subset \{ x_1 <x_2\}.
$$
We have $\langle n_{\partial\mathcal{C}_1^{\e ,1}}(x), e_E\rangle <0$ and  $\langle n_{\partial\mathcal{C}_1^{\e ,1}}(x), e_N\rangle >0$,
whereas $F_1^\e(x)=0$ (by definition) and  $F_2^\e(x)<0$ (since $\sin\theta <0$ and $x_2>0$).
Hence $\langle F^\e(x), n_{\partial\mathcal{C}_1^{\e ,1}}(x)\rangle <0$, which proves that the
upper-boundary of $\mathcal{C}_1^{\e ,1}$ is invariant.

The lower-boundary of $\mathcal{C}_1^{\e ,1}\cap \{ x_1-\eta_\e \leq x_2\}$ consists of two parts.
The first one is a piece of $\{x_1-\eta_\e=x_2\}$.
The second one is a part of the lower-branch of the hyperbola, more precisely
\begin{eqnarray}\label{piece-hyp-low}
\{ x_2= -\sqrt{\frac{\e \cos\theta}{2x_1}}, x_1 -\eta_\e\leq x_2 \}.
\end{eqnarray}

We first prove that $\{ x_1 -\eta_\e \leq x_2\}$ is invariant for $t\geq t_\e$.
Notice that $z=X_2^\varepsilon- X_1^\varepsilon$  is the solution
of the linear ODE~\eqref{edo-l} with $a(t)= 2\e^{-1} X_1^\varepsilon(t)X_2^\varepsilon(t)$,
$z(t_\e)= -\eta_\e \geq 0$ and $\alpha = \sin\theta - \cos\theta \geq 0$ (since $\theta\in (\pi, \frac{5\pi}{4}]$).
Hence, by Lemma~\ref{qualit-lin-ode}, we obtain $z(t)\geq  -\eta_\e$ for all $t\geq t_\e$, which proves
the result.

Now, let $x$ be on the piece of curve given by~\eqref{piece-hyp-low}.
We have $\langle n_{\partial\mathcal{C}_1^{\e ,1}}(x), e_E\rangle <0$, $\langle n_{\partial\mathcal{C}_1^{\e ,1}}(x), e_N\rangle <0$,
and $F_1^\e(x)=0$. It follows that $\langle F^\e(x), n_{\partial\mathcal{C}_1^{\e ,1}}(x)\rangle \leq 0$
holds if and only if $F_2^\e(x)\geq 0$. But, since $(\sin\theta - \cos\theta) \geq 0$ for $\theta\in (\pi, \frac{5\pi}{4}]$ and
$x_1\leq x_1-\eta_\e\leq x_2\leq 0$,
\begin{eqnarray*}
&& F_2^\e(x)-F_1^\e(x) = (\sin\theta - \cos\theta) +\frac{2}{\e} x_1 x_2 (x_2 -x_1) \geq 0
\end{eqnarray*}
Recalling that $F_1^\e(x)=0$, this
concludes the proof.
\end{proof}

We now refine the invariant subset obtained in Lemma~\ref{xprimeneg} in order to have $\dot{X}_1^\e <0$
uniformly with respect to $\e$.

\begin{lemma}\label{xprime-strict-neg}
Let  $\theta\in (\pi, \frac{5\pi}{4}]$ (case (2)). For $0<\nu \leq 1$, define
\begin{eqnarray*}
   && \mathcal{C}_1^{\e ,\nu}:=\left\{ x=(x_1,x_2)\in\R^2:  F_1^\e(x, e_\theta) = \cos\theta -\frac{2}{\e}x_1x_2^2 \leq (1-\nu^2)\cos\theta\right\}.
\end{eqnarray*}
Then, there exists $\overline\nu < 1$ such that 
$\mathcal{C}_1^{\e ,\overline\nu}\cap \{ x_1 - \eta_\e\leq x_2\}\cap \{ x_2\leq 0\}$ is invariant for~\eqref{traj-eps} on $[t_\e,\infty)$.
In particular, for any $x\in\mathcal{C}_1^{\e ,\overline\nu}\cap \{ x_1 - \eta_\e\leq x_2\}\cap \{ x_2\leq 0\}$, one has $\dot{X}_1^\e(t)\leq  (1-\overline\nu^2)\cos\theta$, for all $t\geq t_\e$.
\end{lemma}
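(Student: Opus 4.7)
The plan is to verify the invariance of
$S := \mathcal{C}_1^{\e,\overline\nu}\cap\{x_1-\eta_\e\leq x_2\}\cap\{x_2\leq 0\}$
by checking the sufficient condition \eqref{suff-inv} pointwise on each of its three boundary pieces: (i) the segment on $\{x_2=0\}$ with outward normal $e_N$, (ii) the segment on $\{x_1-\eta_\e=x_2\}$ with outward normal $\frac{1}{\sqrt 2}(1,-1)$, and (iii) the arc of the hyperbola $\{x_1 x_2^2=c\}$, where $c:=\frac{\e\overline\nu^2\cos\theta}{2}<0$, with outward normal parallel to $-\nabla(x_1x_2^2)$. Note that $X^\e(t_\e)=(\eta_\e,0)$ belongs to $\overline S$ and the velocity $F^\e((\eta_\e,0),e_\theta)=(\cos\theta,\sin\theta)$ there has both components negative, so the trajectory immediately enters $\mathrm{int}(S)$, which lies in the closed third quadrant.

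On piece (i), $\langle F^\e, e_N\rangle = F_2^\e|_{x_2=0} = \sin\theta<0$. On piece (ii), the inner product with $(1,-1)$ equals $(\cos\theta-\sin\theta) - \frac{2}{\e}x_1 x_2(x_2-x_1)$; since $\cos\theta-\sin\theta\leq 0$ for $\theta\in(\pi,\tfrac{5\pi}{4}]$, while $x_1 x_2\geq 0$ and $x_2-x_1=-\eta_\e\geq 0$ in the third quadrant make the remaining term nonpositive, this quantity is $\leq 0$. These two checks closely follow the arguments at the end of the proof of Lemma~\ref{xprimeneg}.

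The real work is on piece (iii). Using $F_1^\e=(1-\overline\nu^2)\cos\theta$ on the hyperbola together with $x_1^3x_2^2 = x_1^2 c$, the invariance condition $\frac{d}{dt}(x_1x_2^2)\geq 0$ reduces, after dividing by $\cos\theta<0$, to the algebraic inequality
\[
(1-\overline\nu^2)|x_2|^2 + 2\tan\theta\,|x_1||x_2| - 2\overline\nu^2|x_1|^2 \leq 0.
\]
Viewed as a quadratic in $|x_1|$ and combined with $|x_1||x_2|^2=|c|$, this is equivalent to $|x_2|^3\leq y_+(\overline\nu,\theta):=\frac{2\overline\nu^2|c|}{D(\overline\nu,\theta)}$, with $D(\overline\nu,\theta):=\tan\theta+\sqrt{\tan^2\theta+2\overline\nu^2(1-\overline\nu^2)}$. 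The hyperbolic arc within $S$ corresponds to $|x_2|\in[0,v_*]$, with $v_*$ determined by the intersection with piece (ii), i.e.\ $v_*^2(v_*+|\eta_\e|)=|c|$. Dividing the resulting condition $v_*^3\leq y_+$ by $v_*^2$ and rearranging, invariance of (iii) amounts to
\[
v_*\,\bigl(D(\overline\nu,\theta)-2\overline\nu^2\bigr) \leq 2\overline\nu^2 |\eta_\e|.
\]

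Using the crude bound $v_*\leq|c|^{1/3}=\bigl(\tfrac{\overline\nu^2|\cos\theta|}{2}\bigr)^{1/3}\e^{1/3}$ and the key lower bound $|\eta_\e|\geq\gamma\e^{1/3}$ from Lemma~\ref{etaepsilon}, this reduces to an $\e$-free condition on $\overline\nu$:
\[
\bigl(D(\overline\nu,\theta)-2\overline\nu^2\bigr)\,\bigl(\tfrac{\overline\nu^2|\cos\theta|}{2}\bigr)^{1/3} \leq 2\gamma\,\overline\nu^2.
\]
As $\overline\nu\uparrow 1$, $D(\overline\nu,\theta)\to 2\tan\theta\leq 2$, so $D(\overline\nu,\theta)-2\overline\nu^2\to 2(\tan\theta-1)\leq 0$ for $\theta\in(\pi,\tfrac{5\pi}{4}]$, while the right-hand side remains bounded away from $0$. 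By continuity, one can therefore fix $\overline\nu<1$ close enough to $1$ (depending only on $\theta$ through $\gamma$ and $|\cos\theta|$) so that the inequality holds uniformly in $\e$, which settles piece (iii). The main obstacle is precisely this calibration: both sides of the invariance inequality live at the critical scale $\e^{1/3}$, and the vanishing margin $D(\overline\nu,\theta)-2\overline\nu^2\to 0$ as $\overline\nu\uparrow 1$ is exactly what makes it possible to close the estimate using the sharp bound of Lemma~\ref{etaepsilon}. Once $S$ is invariant on $[t_\e,\infty)$, the pointwise bound $\dot X_1^\e(t)=F_1^\e(X^\e(t),e_\theta)\leq(1-\overline\nu^2)\cos\theta$ follows immediately from $X^\e(t)\in\mathcal{C}_1^{\e,\overline\nu}$.
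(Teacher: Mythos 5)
Your proof is correct and takes a genuinely different (and arguably cleaner) route from the paper's on the hyperbola boundary, which is the crux of the lemma. The paper computes the outward normal along $\partial\mathcal{C}_1^{\e,\nu}$, assembles $Q^\e(x,\nu):=\langle F^\e(x),n_{\partial\mathcal{C}_1^{\e,\nu}}(x)\rangle$, observes that $Q^\e$ is monotone in $x_1$ so that it suffices to evaluate at the corner $x^{\e,\nu}$, where the $\e$-dependence drops out, and then shows $Q^\e(x^{\e,1},1)<0$, concluding by continuity in $\nu$. You instead reformulate invariance as $\frac{d}{dt}(x_1x_2^2)\geq 0$, which, after substituting $F_1^\e=(1-\overline\nu^2)\cos\theta$ on the level set and dividing by $\cos\theta<0$, becomes the quadratic inequality $(1-\overline\nu^2)|x_2|^2+2\tan\theta|x_1||x_2|-2\overline\nu^2|x_1|^2\leq 0$; solving it for $|x_1|$ and eliminating via $|x_1||x_2|^2=|c|$ reduces invariance to the monotone cubic constraint $|x_2|^3\leq y_+$, hence to the single scalar inequality $v_*(D-2\overline\nu^2)\leq 2\overline\nu^2|\eta_\e|$ at the extremal point. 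What this buys: the only input you need from Lemma~\ref{etaepsilon} is the one-sided lower bound $|\eta_\e|\geq\gamma\e^{1/3}$, which is exactly what that lemma proves, whereas the paper writes ``recalling $\eta_\e=-\gamma\e^{1/3}$'' and manipulates the exact cubic root $r_\nu$, tacitly treating $\gamma_\e:=-\eta_\e\,\e^{-1/3}$ as a fixed constant even though Lemma~\ref{etaepsilon} only pins it between two $\e$-independent bounds. Your crude bound $v_*\leq|c|^{1/3}$ suffices precisely because of the cancellation you isolate, $D(\overline\nu,\theta)-2\overline\nu^2\to 2(\tan\theta-1)\leq 0$ as $\overline\nu\uparrow 1$, which vanishes in tandem with the slack; the $\e^{1/3}$-scales on the two sides then match and the condition becomes $\e$-free. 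The explicit checks of the $\{x_2=0\}$ and $\{x_1-\eta_\e=x_2\}$ pieces are also welcome: they essentially reproduce the arguments inside the proof of Lemma~\ref{xprimeneg}, which the paper invokes implicitly via Lemma~\ref{tepsilon}, and make your argument self-contained.
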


\begin{proof}
Due to Lemma~\ref{tepsilon}, it is sufficient to prove that, for all
$$
x\in \partial\mathcal{C}_1^{\e ,\nu}\cap \{ x_1 - \eta_\e\leq x_2\}\cap \{ x_2\leq 0\},
$$
we have $\langle F^\e (x), n_{\partial\mathcal{C}_1^{\e ,\nu}}(x) \rangle \leq 0$.  
The proof is more involved than the one of Lemma~\ref{xprimeneg} because
we do not have $F_1^\e (x)=0$ anymore.

We can rewrite $\partial\mathcal{C}_1^{\e ,\nu}\cap \{ x_1 - \eta_\e\leq x_2\}\cap \{ x_2\leq 0\}$
as the piece of curve
\begin{eqnarray}\label{courbe-bas}
&& \{ x_2= -\nu\sqrt{\frac{\e \cos\theta}{2x_1}}, \ x_1 \leq x^{\e, \nu}_1 \},
\end{eqnarray}
where $x^{\e, \nu}=(x^{\e, \nu}_1,x^{\e, \nu}_2)$ is the unique intersection point
of the hyperbola branch $\{x_2= -\nu\sqrt{\frac{\e \cos\theta}{2x_1}}\}$ and the line segment 
$\{ x_1 - \eta_\e= x_2, \ x^{\e, \nu}_1\leq x_1\leq\eta_\e \}$ (see Figure~\ref{dess_traj-zones}).

We first compute  $x^{\e, \nu}$. Recalling $\eta_\e = - \gamma \e^{1/3}$, 
we obtain the equation
$$
x^{\e, \nu}_1+\gamma \e^{1/3} = -\nu \sqrt{\frac{\e \cos\theta}{2 x^{\e, \nu}_1}}.
$$
Setting $r=\e^{-1/6}\sqrt{- x^{\e, \nu}_1}$, the equation reads
$$
P(r):= r^3 -\gamma r -\nu \sqrt{\frac{-\cos\theta}{2}}.
$$
The polynomial $P$ has a unique positive root $r_\nu >\sqrt{\gamma/3}$ and we eventually
obtain $x^{\e, \nu}_1 =- r_\nu^2 \e^{1/3}$, and $x^{\e, \nu}_2$ is determined accordingly.

We now take $x$ on the piece of curve given by~\eqref{courbe-bas} and we compute
$\langle F^\e (x), n_{\partial\mathcal{C}_1^{\e ,\nu}}(x) \rangle$. A straightforward computation
gives that
$$
n_{\partial\mathcal{C}_1^{\e ,\nu}}(x)=\lambda \left( -\nu \sqrt{\frac{\e \cos\theta}{8 x_1^3}}, -1\right)
$$
for some $\lambda >0$. Then, plugging $x_2 = -\nu\sqrt{\frac{\e \cos\theta}{2x_1}}$ in the formulas of $F^\e_1$ and $F^\e_2$
given by~\eqref{edo-c}, a tedious computation leads to
\begin{eqnarray*}
  Q^{\e}(x,\nu)&:=& \langle F^\e (x), n_{\partial\mathcal{C}_1^{\e ,\nu}}(x) \rangle\\
  &=& \frac{(-\cos\theta)^{3/2}}{2\sqrt{2}}\sqrt{\frac{\e}{(-x_1)^3}}(\nu -\nu^3)
  -(-2\cos\theta)^{1/2}\sqrt{\frac{(-x_1)^3}{\e}} \nu -\sin\theta. 
\end{eqnarray*}
Noticing that the above quantity is  nondecreasing with respect to $x_1\in  (-\infty,x^{\e, \nu}_1]$,
we obtain 
\begin{eqnarray*}
&&  Q^{\e}(x,\nu)\leq  Q^{\e}(x^{\e, \nu}, \nu)
  \quad \text{for all $x\in \partial\mathcal{C}_1^{\e ,\nu}\cap \{ x_1 - \eta_\e\leq x_2\}\cap \{ x_2\leq 0\}$.}
\end{eqnarray*}
Plugging $x^{\e, \nu}_1 =- r_\nu^2 \e^{1/3}$ and recalling that $r_\nu^3 =\gamma r_\nu +\nu \sqrt{(-\cos\theta)/2}$, we obtain
\begin{eqnarray*}
Q^{\e}(x^{\e, \nu}, \nu)&=& \frac{(-\cos\theta)^{3/2}}{2\sqrt{2}  r_\nu^3}(\nu -\nu^3)
  -\sqrt{2}(-\cos\theta)^{1/2} r_\nu^3 \nu -\sin\theta\\
  &=&   \frac{(-\cos\theta)^{3/2}}{2\sqrt{2}  r_\nu^3}(\nu -\nu^3)
  + \nu^2 \cos\theta - \sin\theta -\sqrt{2}(-\cos\theta)^{1/2}\gamma \nu r_\nu,
\end{eqnarray*}
which is independent of $\e$.
Since $\nu\in (0,1]\mapsto r_\nu$ is continuous and $r_\nu >\sqrt{\gamma/3}>0$, the mapping 
$\nu\in (0,1]\mapsto  Q^{\e}(x^{\e, \nu}, \nu)$ is obviously continuous. Moreover,
$$
Q^{\e}(x^{\e, 1}, 1) = \cos\theta - \sin\theta -\sqrt{2}(-\cos\theta)^{1/2}\gamma r_1 < 0,
$$ 
since $\cos\theta \leq \sin\theta <0$ and $r_1 >0$.
By continuity, there exists $\overline\nu <1$ sufficiently close to 1, independent of $\e$, such that
$Q^{\e}(x^{\e, \overline\nu}, \overline\nu)< 0$. The conclusion follows.
\end{proof}

We are now in a position to give the proof of (2).
\medskip

\noindent{\it Proof of (2).}
From~Lemmas \ref{tepsilon} and \ref{etaepsilon}, sending $\e\to 0$,  we obtain that 
$\eta_\e\to 0$ and~$t_\e\to (-\sin\theta)^{-1}$.
By Lemma~\ref{xprime-strict-neg}, we get~$X_1^\e (t)\leq \eta_\e +  (1-\overline\nu^2)(\cos\theta) (t-t_\e)$ for all~$t\geq t_\e$
and  $X_1 (t)\leq (1-\overline\nu^2)(\cos\theta) (t- (-\sin\theta)^{-1})$ at the limit $\e\to 0$.
It follows that~$X$ enters the branch $W$  at $t=(-\sin\theta)^{-1}$.
Using Theorem~\ref{edo-interieur}, we obtain~\eqref{traj-W}.
\qed
\medskip

The following lemma is needed to prove (3).

\begin{lemma}\label{yprimeneg12}
Let  $\theta\in (\frac{5\pi}{4}, \frac{3\pi}{2}]$ (case (3)). For $0<\nu \leq 1$, define
\begin{eqnarray*}
   && \mathcal{C}_2^{\e ,\nu}:=\left\{ x=(x_1,x_2)\in\R^2:  F_2^\e(x, e_\theta) = \sin\theta -\frac{2}{\e}x_1^2x_2 \leq (1-\nu)\sin\theta\right\}.
\end{eqnarray*}
There exist $0<\nu <1$ sufficiently close to 1, and $\rho >0$ small enough, independent of~$\e$, such that the set
\begin{eqnarray}\label{invariant12}
   && \mathcal{C}_2^{\e ,\nu}\cap \{ x_2\leq x_1 +\rho \e^{1/3}\}\cap \{ x_1 \leq 0\}\cap \{ x_2 \leq 0\}
\end{eqnarray}
is invariant for~\eqref{traj-eps}.
\end{lemma}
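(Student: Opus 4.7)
The plan is to verify the sufficient invariance criterion \eqref{suff-inv} on each of the four smooth pieces of $\partial S$, where $S$ denotes the candidate set in \eqref{invariant12}. Three of the checks I expect to handle by elementary sign arguments using the hypothesis $\theta\in(5\pi/4,3\pi/2]$, which yields $\sin\theta<\cos\theta\leq 0$. On $\{x_1=0\}$ the outward normal is $e_E$ and $\langle F^\e,e_E\rangle=\cos\theta\leq 0$; on $\{x_2=0\}$ the outward normal is $e_N$ and $\langle F^\e,e_N\rangle=\sin\theta<0$; on the diagonal $\{x_2=x_1+\rho\e^{1/3}\}$ one computes
\[
F_2^\e-F_1^\e=(\sin\theta-\cos\theta)+\tfrac{2}{\e}x_1x_2(x_2-x_1),
\]
where the second summand is nonnegative (since $x_1,x_2\leq 0$ and $x_2-x_1>0$) but bounded by $2\rho\,s_{\max}^2(\nu)$ uniformly in $\e$, using the defining constraint of $\mathcal{C}_2^{\e,\nu}$ together with the rescaling $-x_1=s\e^{1/3}$, $s\leq s_{\max}(\nu,\rho)$; the difference therefore stays strictly negative once $\rho$ is chosen small.

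The delicate piece is the cusp boundary $\partial\mathcal{C}_2^{\e,\nu}\cap\overline S$, the portion of the curve $\{x_2=\nu\sin\theta\,\e/(2x_1^2)\}$ with $x_1=-r\in[-r_*,0)$, truncated at the intersection $r_*=r_*(\nu,\rho,\e)$ with the line $\{x_2=x_1+\rho\e^{1/3}\}$. Parametrising by $r$, substituting $F_2^\e=(1-\nu)\sin\theta$ and the explicit value of $F_1^\e$ on the cusp, the invariance condition $\langle F^\e,\nabla F_2^\e\rangle\leq 0$ should reduce to $2x_2F_1^\e+x_1F_2^\e\leq 0$; after the natural rescaling $r=s\e^{1/3}$, I expect this to become the $\e$-independent polynomial inequality
\begin{equation*}
\frac{\nu^3\sin^2\theta}{2}\geq\nu|\cos\theta|\,s^3+(1-\nu)\,s^6,\qquad s\in[0,s_{\max}(\nu,\rho)],
\end{equation*}
where $s_{\max}(\nu,\rho)$ is the largest positive root of $s^3-\rho s^2=\nu|\sin\theta|/2$.

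Since the right hand side is monotone increasing in $s$, it will suffice to check the inequality at $s=s_{\max}$. In the formal limit $\nu\to 1^-$, $\rho\to 0^+$, one has $s_{\max}^3\to|\sin\theta|/2$ and the inequality collapses to $|\sin\theta|\geq|\cos\theta|$, which holds \emph{strictly} on $(5\pi/4,3\pi/2]$. By continuity the strict inequality will persist for $\nu<1$ sufficiently close to $1$ and, subsequently, for $\rho>0$ sufficiently small, both independent of $\e$. The hard part will be the bookkeeping on the cusp: the penalising term $\frac{2}{\e}x_1 x_2^2$ inside $F_1^\e$ becomes singular as $r\to 0^+$, and one must verify that its contribution to $2x_2 F_1^\e$ carries the correct sign and is in fact the dominant negative term for small $r$, so that the invariance condition only needs to be verified near $r\sim\e^{1/3}$. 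The strict margin $|\sin\theta|>|\cos\theta|$ that distinguishes case (3) from case (2) in Proposition~\ref{behav-1} is precisely what will make the simultaneous choice of $\nu<1$ and $\rho>0$ possible.
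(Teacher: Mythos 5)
Your proposal is correct and follows essentially the same route as the paper's proof: the three elementary boundary pieces are handled by identical sign arguments, and on the hyperbola the invariance condition $\langle F^\e,\nabla F_2^\e\rangle\leq 0$ reduces (via $2x_2F_1^\e+x_1F_2^\e\leq 0$ and the rescaling $-x_1=s\e^{1/3}$) to the $\e$-independent polynomial inequality you wrote, checked at the worst case $s=s_{\max}$ and verified in the limit $\nu\to1$, $\rho\to0$ using $|\sin\theta|>|\cos\theta|$ on $(5\pi/4,3\pi/2]$. The paper performs the same computation in the variable $r=s^3$, writing it as a function $Q(\nu,r)$ and using that $Q(1,\cdot)$ is increasing on $(0,\overline r]$ with $Q(1,r_1^\theta)=0$; your version, clearing to a polynomial on the compact interval $[0,s_{\max}]$, is a cosmetic reformulation that if anything handles the $s\to0^+$ regime slightly more transparently.
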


\begin{figure}[ht]
  \begin{center}
\includegraphics[width=8cm]{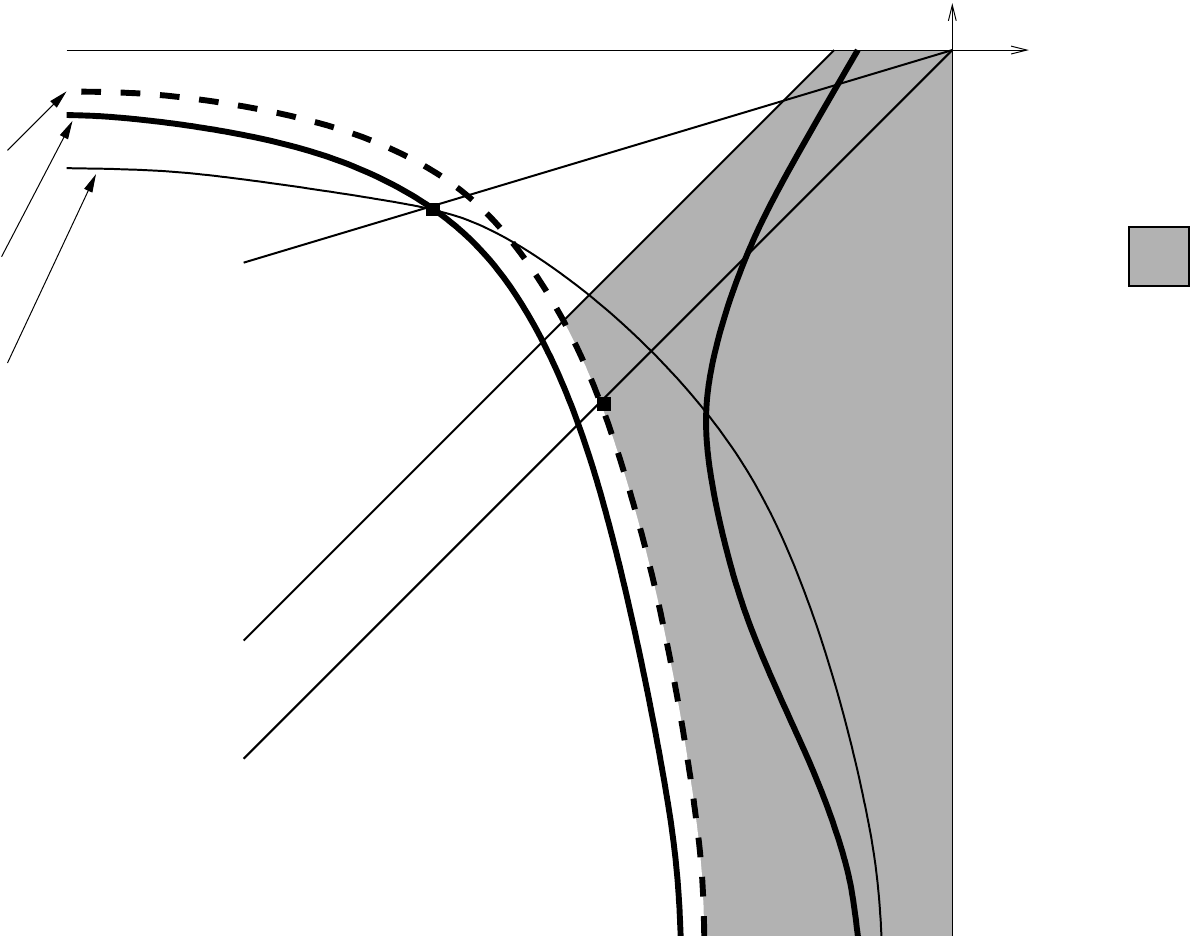}
\caption{}
\label{dess_preuve12}
\unitlength=1pt
\begin{picture}(0,0)
\put(71,210){$\scriptstyle O$}
\put(-38,168){$\scriptstyle x^\theta$}
\put(65,221){$\scriptstyle x_2$}
\put(84,203){$\scriptstyle x_1$}
\put(25,145){$X^{e_N, e_\theta, \e}$}
\put(5,138){$\scriptstyle y^{\theta\nu}$}
\put(-170,188){$x_2=\nu \frac{\e \sin\theta}{2 x_1^2}$}
\put(-165,161){$x_2=\frac{\e \sin\theta}{2 x_1^2}$}
\put(-177,128){$\scriptstyle \displaystyle x_2=-\sqrt{\frac{\e \cos\theta}{2 x_1}}$}
\put(-90,156){$x_2=\frac{\cos\theta}{\sin\theta}x_1$}
\put(-105,70){$x_2=x_1$}
\put(-145,89){$x_2=x_1 +\rho \e^{1/3}$}
\put(118,165){$\mathcal{C}_2^{\e ,\nu} \cap \{ x_2 \leq x_1 + \rho\e^{1/3}\}$}
\end{picture}
\end{center}
\end{figure}

\begin{remark}
We refer the reader to Figure~\ref{dess_preuve12} for an illustration of the proof
of Lemma~\ref{yprimeneg12}, which follows.
In the case (3), $\theta\in (\frac{5\pi}{4}, \frac{3\pi}{2}]$, the equilibrium
point of~\eqref{edo-c} is given by
\begin{eqnarray}\label{point-eq12}
  && x^\theta= ( - (r_1^\theta)^{1/3} , - (r_2^\theta)^{1/3} )\e^{1/3} :=
  \left( -(\frac{(\sin\theta)^2}{2(-\cos\theta)})^{1/3} ,  -(\frac{(\cos\theta)^2}{2(-\sin\theta)})^{1/3}\right)\e^{1/3}.
\end{eqnarray}
\end{remark}

\begin{proof}
Let $S$ be the subset defined by~\eqref{invariant12}. 
From Lemma~\ref{tepsilon}, we know that the ``northeastern'' part $\{ x_1 \leq 0\}\cap \{ x_2 \leq 0\}$ of $S$
is invariant.
The remaining ``western" part of the boundary of $S$ consists of
a line segment and a piece of hyperbola.

Let $x$ be on the  line segment $\{ x_2 =  x_1 +\rho \e^{1/3}\}\cap S$, for some $\rho>0$.
Since $n_{\partial S}(x)=(-1,1)$, we have
\begin{eqnarray*}
  \langle F^\e (x), n_{\partial S}(x) \rangle
  &=& -\cos\theta + \frac{2}{\e} (x_2-\rho \e^{1/3})x_2^2 + \sin\theta -  \frac{2}{\e} (x_2-\rho \e^{1/3})^2 x_2\\
  &=& (\sin\theta -\cos\theta) +2\rho \left( \left(\frac{x_2}{\e^{1/3}}\right)^2  - \rho \left(\frac{x_2}{\e^{1/3}}\right)\right).
\end{eqnarray*}
Noting that, for small $\e>0$,  $x_2=x_1 +\rho \e^{1/3}\geq x_1^\theta+\rho \e^{1/3} \geq -(r_1^\theta)^{1/3}\e^{1/3}$,
where $x_1^\theta$ is the first component in~\eqref{point-eq12}, we obtain
\begin{eqnarray*}
&&  \langle F^\e (x), n_{\partial S}(x) \rangle \leq (\sin\theta -\cos\theta) +2\rho \left( (r_1^\theta)^{2/3} +\rho (r_1^\theta)^{1/3}\right).
\end{eqnarray*}
Recalling that $(\sin\theta -\cos\theta)<0$ for $\theta\in (\frac{5\pi}{4}, \frac{3\pi}{2}]$,
we infer  that $\langle F^\e (x), n_{\partial S}(x) \rangle <0$ for~$\rho >0$  small enough.

Next, we show that the piece of hyperbola is invariant for $0 <\nu <1$ sufficiently close to 1. Let
\begin{eqnarray*}
&& x\in  \partial \mathcal{C}_2^{\e ,\nu}\cap S =\{ x_2= \nu \frac{\e \sin\theta}{2 x_1^2}, x_1\geq x_2 - \rho \e^{1/3}\}.
\end{eqnarray*}
Using $x_2= \nu \frac{\e \sin\theta}{2 x_1^2}$ and setting $r := \e^{-1} (-x_1)^{3}$, tedious computations lead to
$n_{\partial S}(x)= (-\nu\frac{\e \sin\theta}{x_1^3}, -1)$ and
\begin{eqnarray*}
&&  \langle F^\e (x), n_{\partial S}(x) \rangle
= (-\sin\theta)\left( \frac{-\cos\theta}{r}\nu - \frac{(\sin\theta)^2}{2r^2}\nu^3 +1 -\nu\right)=: Q(\nu, r). 
\end{eqnarray*}
We set
\begin{eqnarray*}
  y^{\theta\nu} &:=&\{x_2=x_1\}\cap \{ x_2=\nu \frac{\e \sin\theta}{2 x_1^2} \}\\
  &=&  \left(-(\nu \frac{(-\sin\theta)}{2})^{1/3},  -(\nu \frac{(-\sin\theta)}{2})^{1/3}\right)\e^{1/3} 
  =: \left(-(s_1^{\theta\nu})^{1/3} , -(s_2^{\theta\nu})^{1/3}\right)\e^{1/3}.
\end{eqnarray*}
Since $\sin\theta < \cos\theta <0$ and $\nu <1$, one has
\begin{eqnarray*}
  && s_1^{\theta\nu} = \nu \frac{(-\sin\theta)}{2} = \nu \frac{\cos\theta}{\sin\theta}  r_1^\theta <  r_1^\theta
\end{eqnarray*}
and, for $\rho >0$ small enough, there exists $\overline{r}>0$ satisfying
\begin{eqnarray*}
&& 0 < r=\left(\frac{-x_1}{\e^{1/3}}\right)^{3} \leq ( (s_1^{\theta\nu})^{1/3} +\rho )^3 \leq \overline{r}< r_1^\theta.
\end{eqnarray*}
A straightforward computation yields
$$
\max_{0<r\leq \overline{r}< r_1^\theta} Q(1,r) =  Q(1,\overline{r}) <  Q(1,r_1^\theta) =0.
$$
By continuity of the mapping $(\nu,r)\in (0,1]\times (0,\overline{r}]\mapsto Q(\nu,r)$, 
there exists some positive $\nu$ close to 1, such that $\langle F^\e (x), n_{\partial S}(x) \rangle <0$
for all $x$ on the piece of hyperbola.
Lemma~\ref{yprimeneg12} is proved.
\end{proof}

\noindent{\it Proof of (3).}
By Lemmas~~\ref{tepsilon} and \ref{etaepsilon}, $y^\e:=X^{e_N, e_\theta , \e}(t_\e) = (-\gamma^{e_N,e_\theta}\e^{1/3}, 0)$,
where~$\gamma^{e_N,e_\theta} >0$ is independent of $\e$.
From Lemma~\ref{lem-scaling}, we have
\begin{eqnarray}\label{form-sc}
&& X^{e_N, e_\theta , \e}(t+t_\e)= X^{y^\e , e_\theta, \e}(t)=
\frac{1}{\rho} X^{\rho y^\e , e_\theta, \rho^3 \e}(\rho t) \quad \text{for all $t\geq 0,\ \rho>0$.}
\end{eqnarray}
Setting $\rho =\tilde{\rho} (\gamma^{e_N, e_\theta})^{-1}$ and $\tilde{\e} = \rho^3 \e$,
from Lemma~\ref{yprimeneg12}, for $\tilde{\rho}$ small enough and $\nu$ sufficiently close
to 1, we obtain that $X^{\rho y^\e , e_\theta, \tilde\e}(s)\in  \mathcal{C}_2^{\tilde\e ,\nu}$ for
all $s\geq 0$. In particular, $\dot{X}_2^{\rho y^\e , e_\theta, \tilde\e}(s)\leq (1-\nu)\sin\theta, s\geq 0$.
It follows that, ${X}_2^{\rho y^\e , e_\theta, \tilde\e}(s)\leq (1-\nu)(\sin\theta)s$, for all~$s\geq 0$.
From~\eqref{form-sc}, we have $X_2^{e_N, e_\theta , \e}(t+t_\e)\leq (1-\nu)(\sin\theta)t$
for all $t\geq 0$. Sending $\e\to 0$, we get $X_2^{e_N, e_\theta}(t+(-\sin\theta)^{-1})\leq (1-\nu)(\sin\theta)t$,
which proves that $X^{e_N, e_\theta}$ enters the branch $S$ for $t> (-\sin\theta)^{-1}$.
This completes the proof.
\qed

\section{Proofs of Lemmas~\ref{controle-vitesse-123} and~\ref{traj-y-tilde}}
\label{sec-proof-lemma123}

\begin{proof}[Proof of Lemma~\ref{controle-vitesse-123}]
The first statement is a straightforward application of
Proposition~\ref{prop-inv}\,(3) for $\e$ small enough compared to $\gamma$.

For a.e. $t\in\mathcal{T}_N$, we have
\begin{eqnarray*}
|\dot{X}_{2}^{x,\alpha,\e}|
&=& |f_2( X^{x,\alpha,\e}, \alpha) - 2\e^{-1} (X_1^{x,\alpha,\e})^2 X_2^{x,\alpha,\e}|\\
&\leq & |f|_\infty +   2\e^{-1} \sqrt{d(X^{x,\alpha,\e})} |X_1^{x,\alpha,\e}|\\
&=& |f|_\infty (1+ C\e^{5\gamma/24}),
\end{eqnarray*}
since, for  $t\in\mathcal{T}_N$, we know that $d(X^{x,\alpha,\e})= (X_1^{x,\alpha,\e} X_2^{x,\alpha,\e})^2\leq \e^{4\gamma/3}$
and $X_1^{x,\alpha,\e}\leq \e^{13\gamma/24}$.
We prove the second inequality with similar computations.
\end{proof}

\begin{proof}[Proof of Lemma~\ref{traj-y-tilde}]
By definition of $\underline{Y}$, we have, for all $t\geq C\e^{1-\gamma}$,
\begin{eqnarray*}
&& \underline{Y}(t)=
 (1 +C\e^{5\gamma/24})^{-1}
\left\{
\begin{array}{ll}
(0,  X_2^{x,\alpha,\e}(t)-\e^{\gamma/8}), & t\in \mathcal{T}_N,\\  
(0,0), & t\in \mathcal{T}_O,\\
(X_1^{x,\alpha,\e}(t)-\e^{\gamma/8},0), & t\in \mathcal{T}_E,  
\end{array}  
\right.
\end{eqnarray*}  
which, since $X^{x,\alpha, \varepsilon}\in W_{\rm loc}^{1,\infty}[0,\infty)$, proves (1). The proof of (2) follows from the estimates in Lemma~\ref{controle-vitesse-123}.

We now prove~\eqref{estim-traj-111} for $\underline{Y}$.
When $t\in \mathcal{T}_O$, we obviously have
\begin{eqnarray*}
  && |\underline{Y}(t)-X^{x,\alpha,\e}(t)| \,=\, |X^{x,\alpha,\e}(t)|\leq \sqrt{(\e^{\gamma/8})^2+(\e^{\gamma/8})^2}
  \leq \sqrt{2} \e^{\gamma/8}.
\end{eqnarray*} 

We turn to the case when $t\in \mathcal{T}_N$ (the case $t\in \mathcal{T}_E$ is similar).
Let $(s_i^N,t_i^N) \subset  \mathcal{T}_N$ be the unique interval containing $t$.
For a.e. $s\in (s_i^N,t_i^N)$, using Lemma~\ref{controle-vitesse-123}, we get
\begin{eqnarray}\label{est-eacrt-vitesse-2}
|\dot{\underline{Y}}_2(s)-\dot{X}_2^{x,\alpha,\e}(s)|
\leq |(1 +C\e^{5\gamma/24})^{-1}\dot{X}_2^{x,\alpha,\e}(s) -\dot{X}_2^{x,\alpha,\e}(s)| \leq C\e^{5\gamma/24}.
\end{eqnarray} 
Hence, since, by definition, $\underline{Y}(s_i^N)=(0,0)$ and $X^{x,\alpha,\e}(s_i^N)=(X_1^{x,\alpha,\e}(s_i^N),  \e^{\gamma/8})$, integrating~\eqref{est-eacrt-vitesse-2} over $(s_i^N,t)$, we obtain
\begin{eqnarray*}
|\underline{Y}_2(t)-X_2^{x,\alpha,\e}(t)|
\leq |\underline{Y}_2(s_i^N)-X_2^{x,\alpha,\e}(s_i^N)|+  C\e^{5\gamma/24}(t-s_i^N)
\leq \e^{\gamma/8}+  C\e^{5\gamma/24}(t-s_i^N).
\end{eqnarray*} 
It follows that 
\begin{eqnarray*}
|\underline{Y}(t)-X^{x,\alpha,\e}(t)|
&\leq&  |X_1^{x,\alpha,\e}| + |\underline{Y}_2(t)-X_2^{x,\alpha,\e}(t)|\\
&\leq& \e^{13\gamma/24} + \e^{\gamma/8}+  C\e^{5\gamma/24}(t-s_i^N)
\leq C( \e^{\gamma/8} +  \e^{5\gamma/24}t).
\end{eqnarray*}
This ends the proof of Lemma~\ref{traj-y-tilde}.
\end{proof}




\end{document}